\title[Weak weak approximation for degree-two del Pezzo surfaces]{Weak weak approximation and the Hilbert property for degree-two del Pezzo surfaces}
\subjclass[2020]{14G05, 14G12 (14G25, 14G27, 11G35).}
\author{Julian Demeio}
\address{Julian Demeio \\
Department of Mathematical Sciences \\
University of Bath \\
North Road \\
Bath\\
BA2 7AY\\
UK.}
\email{jd2724@bath.ac.uk}
\author{Sam Streeter}
\address{Sam Streeter \\
School of Mathematics \\
University of Bristol \\
Woodland Road \\
Bristol \\
BS1 8UG \\
UK.}
\email{sam.streeter@bristol.ac.uk}
\author{Rosa Winter}
\address{Rosa Winter \\
Faculty of Mathematics and Computer Science \\
UniDistance Suisse \\
Schinerstrasse 18 \\
3900 Brig\\
Switzerland}
\email{rosa.winter@unidistance.ch}
\theoremstyle{definition}
\newtheorem{mydef}{Definition}[section]
\newtheorem{note}[mydef]{Note}
\newtheorem{remark}[mydef]{Remark}
\newtheorem{question}[mydef]{Question}
\newtheorem*{proofmainthm}{Proof of Theorem \ref{thm:main1}}
\theoremstyle{plain}
\newtheorem{theorem}[mydef]{Theorem}
\newtheorem{proposition}[mydef]{Proposition}
\newtheorem{lemma}[mydef]{Lemma}
\newtheorem{corollary}[mydef]{Corollary}
\newtheorem{conjecture}[mydef]{Conjecture}
\newcounter{tmp}
\let\originalleft\left
\let\originalright\right
\renewcommand{\left}{\mathopen{}\mathclose\bgroup\originalleft}
\renewcommand{\right}{\aftergroup\egroup\originalright}
\renewcommand{\P}{\mathbb{P}}
\newcommand{\Z}{\mathbb{Z}}
\newcommand{\F}{\mathbb{F}}
\DeclareMathOperator{\Br}{Br}
\DeclareMathOperator{\ch}{char}
\DeclareMathOperator{\Div}{Div}
\DeclareMathOperator{\Pic}{Pic}
\DeclareMathOperator{\Spec}{Spec}
    \DeclareFontFamily{U}{wncy}{}
    \DeclareFontShape{U}{wncy}{m}{n}{<->wncyr10}{}
    \DeclareSymbolFont{mcy}{U}{wncy}{m}{n}
    \DeclareMathSymbol{\Sh}{\mathord}{mcy}{"58} 
\newcommand*{\da@rightarrow}{\mathchar"0\hexnumber@\symAMSa 4B }
\newcommand*{\da@leftarrow}{\mathchar"0\hexnumber@\symAMSa 4C }
\newcommand*{\xdashrightarrow}[2][]{%
  \mathrel{%
    \mathpalette{\da@xarrow{#1}{#2}{}\da@rightarrow{\,}{}}{}%
  }%
}
\newcommand{\xdashleftarrow}[2][]{%
  \mathrel{%
    \mathpalette{\da@xarrow{#1}{#2}\da@leftarrow{}{}{\,}}{}%
  }%
}
\newcommand*{\da@xarrow}[7]{%
\sbox0{$\ifx#7\scriptstyle\scriptscriptstyle\else\scriptstyle\fi#5#1#6\m@th$}%
 \sbox2{$\ifx#7\scriptstyle\scriptscriptstyle\else\scriptstyle\fi#5#2#6\m@th$}%
  \sbox4{$#7\dabar@\m@th$}%
  \dimen@=\wd0 %
  \ifdim\wd2 >\dimen@
    \dimen@=\wd2 %
  \fi
  \count@=2 %
  \def\da@bars{\dabar@\dabar@}%
  \@whiledim\count@\wd4<\dimen@\do{%
    \advance\count@\@ne
    \expandafter\def\expandafter\da@bars\expandafter{%
      \da@bars
      \dabar@ 
    }%
  }%
  \mathrel{#3}%
  \mathrel{%
    \mathop{\da@bars}\limits
    \ifx\\#1\\%
    \else
      _{\copy0}%
    \fi
    \ifx\\#2\\%
    \else
      ^{\copy2}%
    \fi
  }%
  \mathrel{#4}%
}
\begin{document}
\begin{abstract}
We prove that del Pezzo surfaces of degree $2$ over a field $k$ satisfy weak weak approximation if $k$ is a number field and the Hilbert property if $k$ is Hilbertian of characteristic zero, provided that they contain a $k$-rational point lying neither on any $4$ of the $56$ exceptional curves nor on the ramification divisor of the anticanonical morphism. This builds upon results of Manin, Salgado--Testa--V\'arilly-Alvarado, and Festi--van Luijk on the unirationality of such surfaces, and upon work of the first two authors verifying weak weak approximation under the assumption of a conic fibration.
\end{abstract}
\maketitle
\setcounter{tocdepth}{1}
\tableofcontents
\section{Introduction}
A central belief in arithmetic geometry is that, if a Fano variety has one rational point, then it has many, and that they are well-distributed. Generally, the difficulty of verifying this belief increases with dimension. For curves, the situation is simple: a smooth projective curve is Fano if and only if it has genus $0$, in which case it is isomorphic to $\mathbb{P}^1$ as soon as it has a rational point. For varieties of dimension at least three, little is known other than results via the circle method; the two-dimensional case, that of \emph{del Pezzo surfaces}, has been a major area of research and progress in the last fifty years. 

One property capturing the abundance and distribution of rational points is \emph{weak approximation}, satisfied by a variety $X$ over a number field $K$ if its set $X\left(K\right)$ of rational points is dense in $\prod_{v \in \Omega_K}X\left(K_v\right)$, where $\Omega_K$ is the set of places of $K$. Many varieties fail weak approximation, but this failure can sometimes be explained. In 1970, Manin \cite{MAN70} gave a pairing between the \emph{Brauer group} $\Br X := H^2_{\text{\'et}}\left(X,\mathbb{G}_m\right)$ and the adelic points $X\left(\mathbb{A}_K\right)$ (equal to $\prod_{v \in \Omega_K}X(K_v)$ for $X$ projective). The kernel $X\left(\mathbb{A}_K\right)^{\Br}$ is closed in $X\left(\mathbb{A}_K\right)$ and contains $X\left(K\right)$. Hence, if $X\left(\mathbb{A}_K\right)^{\Br} \subsetneq X\left(\mathbb{A}_K\right)$, then $X$ fails weak approximation, and we say that there is a \emph{Brauer--Manin obstruction to weak approximation for $X$}.

While certain del Pezzo surfaces exhibit this obstruction (see \cite[\S9.4]{POO} for examples), Colliot-Th\'el\`ene has conjectured \cite[Conj.~14.1.2]{CTSk} that it is the only one to weak approximation for a large class of varieties including Fano varieties.

\begingroup
\setcounter{tmp}{\value{mydef}}
\setcounter{mydef}{0}
\renewcommand\themydef{\Alph{mydef}}
\begin{conjecture}[Colliot-Th\'el\`ene] \label{Conj:A}
For $X$ a geometrically rationally connected smooth projective variety over a number field $K$, we have $\overline{X\left(K\right)} = X\left(\mathbb{A}_K\right)^{\Br}$.
\end{conjecture}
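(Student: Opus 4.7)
The statement is the Colliot-Th\'el\`ene conjecture itself, one of the most important open problems in arithmetic geometry. Any concrete ``proof proposal'' is therefore a sketch of the general strategy known to work in special cases, rather than a route to the full result. The plan below is that strategy.

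One would exploit a fibration $f \colon X \to B$ with geometrically rationally connected fibres, where the base $B$ has either strictly lower dimension than $X$ or is a curve of genus zero, and in either case is arithmetically well understood. Starting from a Brauer--Manin-unobstructed adelic point $\left(P_v\right)_v \in X\left(\mathbb{A}_K\right)^{\Br}$, the steps would be: first, push forward to an adelic point on $B$ that remains unobstructed by the \emph{vertical} part of $\Br X$ with respect to $f$; second, combine weak (weak) approximation on $B$ with a Hilbert-irreducibility-type argument to locate a $K$-point $b \in B\left(K\right)$ whose fibre $X_b$ exhibits the required local behaviour near $\left(P_v\right)_v$; third, appeal inductively, or via a base case such as $X_b$ being a Severi--Brauer variety or birational to $\mathbb{P}^1_K$, to the conjecture for $X_b$, and thereby produce a $K$-point of $X$ close to $\left(P_v\right)_v$ at the places of interest. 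This is the \emph{fibration method} initiated by Colliot-Th\'el\`ene and Sansuc and refined by Harari, Harpaz, Skorobogatov, Wittenberg and others; it underpins essentially every positive result toward Conjecture~\ref{Conj:A}, including the conic-bundle techniques on which the present paper relies.

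The principal obstacle is that no \emph{general} geometric reduction of this type is available: given an arbitrary geometrically rationally connected $X$, there is no known construction of a fibration whose base and fibres are both amenable to the method. For del Pezzo surfaces of low degree the natural idea is to use conic fibrations, but in degree two these exist only after blowing up the surface at a suitably chosen point, and $K$-points produced by the method may then land on the exceptional locus; ensuring that one can avoid the ramification divisor of the anticanonical morphism and sufficiently many of the $56$ exceptional curves is exactly the technical difficulty addressed by the main theorem of this paper. Outside dimension two, Conjecture~\ref{Conj:A} remains open in all but a few special higher-dimensional classes, such as toric varieties, homogeneous spaces, certain compactifications of linear algebraic groups, and complete intersections accessible to the circle method.
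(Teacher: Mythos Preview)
The statement you were asked to ``prove'' is a conjecture, and the paper does not prove it; it is stated as motivation and remains open. You recognize this at the outset, and your summary of the fibration method as the dominant strategy toward Conjecture~\ref{Conj:A} is accurate and appropriate. There is nothing to compare your proposal against, because the paper supplies no proof of this statement.

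One correction to your account of the paper itself: you say the fibration method ``underpins \dots the conic-bundle techniques on which the present paper relies,'' and that the technical difficulty of avoiding exceptional curves after blowing up to obtain a conic fibration ``is exactly the technical difficulty addressed by the main theorem of this paper.'' That describes the earlier work \cite{DS}, not this paper. The point of the present paper is precisely to \emph{dispense} with the conic-fibration hypothesis: rather than fibre $X$ in conics, the authors build a split-unirational cover $Z_6 = (\mathbb{P}^1)^6 \dashrightarrow X$ by iterating two Segre--Manin-style procedures (a rational-curve procedure through a general point and a point-propagation map $\phi$ on anticanonical curves), and then invoke Denef's arithmetic-surjectivity criterion. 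So while your general picture of the subject is sound, your description of how this particular paper fits into it is off.
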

\endgroup

For such $X$, we have $X\left(\mathbb{A}_K\right)^{\Br} = U \times \prod_{v \not\in S}X\left(K_v\right)$ for some finite set $S\subset\Omega_K$ and non-empty 
 open subset $U \subset \prod_{v \in S} X(K_v)$ \cite[(6),~p.~347]{CTSk}. Then Colliot-Th\'el\`ene's conjecture implies that any del Pezzo surface $X/K$ satisfies \emph{weak weak approximation}, meaning that $X\left(K\right)$ is dense in $\prod_{v \not\in S}X\left(K_v\right)$ for some finite $S$. 

Colliot-Th\'el\`ene and Sansuc \cite[p.~190]{CTS} raised the following question concerning the abundance of rational points on unirational varieties over a larger class of fields.

\begingroup
\setcounter{tmp}{\value{mydef}}
\setcounter{mydef}{1}
\renewcommand\themydef{\Alph{mydef}}
\begin{question}[Colliot-Th\'el\`ene--Sansuc] \label{Que:B}
For $X$ a projective unirational variety over a Hilbertian field $k$, is $X(k)$ always non-thin? That is, does $X$ have the Hilbert property?
\end{question}
\setcounter{mydef}{0}
\endgroup

See Definition \ref{def:hp} for the definitions of Hilbertianity and thin sets, and \cite[Question-Conjecture 1]{CZ} for a recently posed more extensive question when $k$ is a number field. Note that, by work of Colliot-Th\'el\`ene and Ekedahl \cite[Thm.~3.5.7]{SER}, weak weak approximation (over a number field) implies the Hilbert property.

There are no examples of del Pezzo surfaces with a rational point which are known to be non-unirational \cite[Rem.~9.4.11]{POO}. Thus, given a del Pezzo surface $X$ over a field $k$ with $X(k) \neq \emptyset$, proving that $X$ satisfies the Hilbert property when $k$ is Hilbertian and weak weak approximation when $k$ is a number field supports Conjecture \ref{Conj:A} and a positive answer to Question \ref{Que:B}, and is the focus of this paper.

Del Pezzo surfaces are classified by their \textsl{degree} $d:=K_X^2 \in \{1,\dots,9\}$, where $K_X$ is the canonical divisor. As dimension notionally governs arithmetic complexity for all Fano varieties, so does the degree for del Pezzo surfaces. Weak approximation when $X$ has a rational point holds for $d \geq 5$ \cite[Thm 9.4.8(vii)]{POO}, and counterexamples are given for all $d\leq4$ (\cite[\S9.4]{POO}). However, Salberger and Skorobogatov proved \cite[Thm.~0.1]{SS} that Conjecture \ref{Conj:A} holds for $d=4$ when $X$ has a rational point, thus implying weak weak approximation. Work of Swinnerton-Dyer \cite{SD} on cubic surfaces gives weak weak approximation when $d=3$, again under the assumption of a rational point. Examples of degree-$d$ del Pezzo surfaces with a Brauer--Manin obstruction to weak approximation have been given by Kresch and Tschinkel \cite[\S7]{KT} when $d=2$ and V\'arilly-Alvarado \cite[Thm.~1.1]{VAR} when $d=1$. However, little is known about weak weak approximation for $d=1,2$ outside of recent work of the first two authors \cite{DS}, which relies on the surface possessing a conic fibration. By adapting the strategies and methods of Swinnerton-Dyer used to prove unirationality, we shall prove weak weak approximation in the case $d=2$ without the requirement of a conic fibration.

\subsection{Results}
\begin{theorem} \label{thm:main1}
Let $X$ be a del Pezzo surface of degree $2$ over a number field $K$. Assume that there exists a point $P_0 \in X\left(K\right)$ lying neither on any $4$ of the $56$ exceptional curves of $X$, nor on the ramification divisor of the anticanonical morphism $\kappa\colon  X \rightarrow \mathbb{P}^2$. Then $X$ satisfies weak weak approximation.
\end{theorem}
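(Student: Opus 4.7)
The plan is to upgrade the unirationality arguments of Festi--van Luijk and Salgado--Testa--V\'arilly-Alvarado into a weak weak approximation statement by exhibiting an auxiliary rational surface $Y$ with a conic bundle structure dominating $X$ through $P_0$, and then invoking the first two authors' weak weak approximation theorem for such conic-fibered surfaces.

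The starting point is the pencil of lines through $\kappa(P_0) \in \mathbb{P}^2$. Pulling back via the anticanonical morphism yields a pencil of generically smooth genus-one curves on $X$ all containing $P_0$ and its conjugate $P_0' = \iota(P_0)$ under the anticanonical involution $\iota$. Blowing up $X$ at $\{P_0, P_0'\}$ produces an elliptic fibration $\widetilde{X} \to \mathbb{P}^1$ with two $K$-rational sections. The assumption that $P_0$ is not on the ramification divisor ensures $P_0 \neq P_0'$, so the blow-up is genuine and the two sections are distinct. The next step --- and this is the heart of the Swinnerton-Dyer / Salgado--Testa--V\'arilly-Alvarado strategy --- is to exploit the additional geometry of the $56$ exceptional curves and the tangent lines to the branch quartic at the base points of the pencil to produce a further $K$-rational multisection of small degree. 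Combined with the difference of the two existing sections, this multisection should witness positive Mordell--Weil rank for $\widetilde{X} \to \mathbb{P}^1$ and, after a suitable base change, exhibit $\widetilde{X}$ as dominated by a conic bundle $\pi\colon Y \to \mathbb{P}^1$ via a dominant rational $K$-map $\phi\colon Y \dashrightarrow X$.

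With the conic bundle in hand, I then apply the main result of \cite{DS} to $\pi\colon Y \to \mathbb{P}^1$ to conclude that $Y$ satisfies weak weak approximation. This descends to $X$ along $\phi$: for any finite set $S \subset \Omega_K$ and any family $(x_v)_{v \notin S}$ in $\prod_{v \notin S} X(K_v)$ landing in the locus of definition of $\phi^{-1}$, one lifts each $x_v$ to $Y(K_v)$ by the inverse function theorem (enlarging $S$ by finitely many places if necessary), approximates globally by a $K$-rational point of $Y$ using weak weak approximation on $Y$, and pushes the approximation forward through $\phi$ to obtain a $K$-point of $X$ close to the $x_v$.

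The main obstacle is the construction of the dominating conic bundle $Y$: while unirationality only requires a dense family of rational curves through $P_0$, weak weak approximation demands enough structural control on this family to approximate local points at infinitely many places simultaneously. The genericity hypotheses on $P_0$ --- avoiding any four of the $56$ exceptional curves and the ramification divisor --- are calibrated precisely to ensure that the auxiliary multisection exists, is defined over $K$, and does not pass through the degenerate fibers of $\pi$. Verifying these conditions will require a careful combinatorial and Galois-theoretic analysis of the exceptional configuration at $P_0$, together with checking that the $K_v$-points where $\phi$ fails to be locally surjective form the complement of a Zariski open set stable enough to absorb into the allowed finite set $S$. This is where I expect the bulk of the technical work to lie.
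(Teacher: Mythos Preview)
Your proposal has a genuine structural gap in the descent step, and the construction of the auxiliary conic bundle is too vague to be workable.

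\textbf{The descent gap.} Weak weak approximation does not descend along an arbitrary dominant rational map $\phi\colon Y \dashrightarrow X$. What you need is that $\phi(Y(K_v)) = X(K_v)$ for all $v$ outside a finite set --- i.e.\ that $\phi$ is \emph{arithmetically surjective} (Definition~\ref{def:ar.surj.}). Your inverse function theorem argument only shows that $\phi(Y(K_v))$ contains a $v$-adically open subset of $X(K_v)$; the complement will typically contain the $K_v$-points of a proper closed subvariety of $X$, and those are nonempty for almost all $v$. You cannot absorb this failure into a finite $S$. Establishing arithmetic surjectivity is precisely the hard part: by Denef's criterion (see Definition~\ref{def:su} and Proposition~\ref{prop:ssfcover}), it requires controlling the fibers of $\phi$ over \emph{every} codimension-$1$ point of $X$, not just the generic one. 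Nothing in your outline addresses this.

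\textbf{The construction of $Y$.} The passage from ``positive Mordell--Weil rank on $\widetilde{X}$'' to ``a conic bundle $Y$ dominating $X$'' is not a standard step and you have not indicated how it would go. Moreover, the main result of \cite{DS} concerns del Pezzo surfaces that \emph{themselves} carry a conic fibration; the present paper is written precisely to remove that hypothesis (see the paragraph preceding \S1.1), so invoking \cite{DS} on an auxiliary $Y$ does not sidestep the issue unless you separately prove arithmetic surjectivity of $Y \dashrightarrow X$.

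\textbf{What the paper actually does.} The paper follows Swinnerton-Dyer's strategy for cubic surfaces rather than the conic-bundle route. It introduces a new ``point procedure'' $\phi\colon X \times X \dashrightarrow X$ (Section~\ref{subsubsec: propagating points}), analogous to the third-intersection-point map on cubics, and combines it with the rational curve $C_{P_0}$ of \cite{STVA} to build an explicit tower of rational covers $f_n\colon (\mathbb{P}^1)^n \dashrightarrow X$. The cover $f_3$ is shown to have geometrically integral generic fiber (Proposition~\ref{GIGF}), and $f_6$ is shown to admit an open subset on which it is smooth with split fibers and surjects onto $X$ (Proposition~\ref{prop:su}). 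This last point --- surjectivity onto all of $X$, not just a dense open --- is exactly what guarantees arithmetic surjectivity via Proposition~\ref{prop:ssfcover}, and it is achieved through a careful ``slantedness'' argument (Lemma~\ref{LemSlantedness}) rather than anything resembling a conic-bundle reduction.
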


\begin{remark}
Our method does not readily give an explicit description of the ``bad set'' of places $S$ implicit in the definition of weak weak approximation. In contrast, Swinnerton-Dyer (Theorem~\ref{thm:SD}) proved that for a (smooth) cubic surface $W \subset \mathbb{P}^3$, the set $S$ can be taken to be the set of (real archimedean places and) places $v$ such that the scheme-theoretic closure of $W$ in $\mathbb{P}^3_{\mathcal{O}_K}$ is singular over the prime ideal corresponding to $v$. It would be interesting to determine whether a similar bound on $S$ could be obtained in the degree-$2$ case, as would be implied by Conjecture \ref{Conj:A}.
\end{remark}

Our approach is similar to that of \cite{DS}, in that we iteratively produce covers of $X$ with desirable properties (including rationality of the source and geometric integrality of the generic fiber). Since the construction of these covers works over any field of characteristic zero (see Note \ref{note:poschar}), we obtain as a byproduct the following result (see Definition \ref{def:hu} for the definition of Hilbert-unirationality).

\begin{theorem} \label{thm:main2}
Let $X$ be a del Pezzo surface of degree $2$ over a field $k$ of characteristic zero. Assume that there exists a point $P_0 \in X\left(k\right)$ as in Theorem \ref{thm:main1}. Then $X$ is Hilbert-unirational. In particular, if $k$ is Hilbertian, then $X$ satisfies the Hilbert property.
\end{theorem}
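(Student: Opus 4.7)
The plan is to observe that the construction used to prove Theorem~\ref{thm:main1} is fundamentally geometric and relies only on the characteristic-zero hypothesis on $k$ (as indicated by Note~\ref{note:poschar}); it therefore produces a cover exhibiting Hilbert-unirationality of $X$, from which the Hilbert property follows by standard arguments over Hilbertian fields.

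First I would revisit the tower of dominant rational maps built in the course of proving Theorem~\ref{thm:main1}, which adapts the unirationality strategies of Manin, Salgado--Testa--V\'arilly-Alvarado, and Festi--van Luijk to the approximation setting. Starting from $P_0$, one forms pencils of curves on $X$ (and on successive intermediate varieties), typically obtained from tangent lines to the branch quartic of $\kappa$ passing through $\kappa(P_0)$, in order to construct a chain
\[
V_n \dashrightarrow V_{n-1} \dashrightarrow \cdots \dashrightarrow V_1 \dashrightarrow X
\]
of dominant rational $k$-maps in which $V_n$ is $k$-rational and every intermediate map has geometrically integral generic fiber. The hypotheses on $P_0$---avoiding the ramification divisor of $\kappa$ and lying on at most three of the fifty-six exceptional curves---are exactly what is needed to ensure the required genericity at each step of the tower, for instance so that residual conics are smooth and so that the branch curves of the associated double covers do not degenerate. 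Since no arithmetic input beyond $\operatorname{char}(k)=0$ intervenes, the same construction works uniformly over any such $k$.

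Composing the tower yields a dominant rational $k$-map $f\colon V \dashrightarrow X$ from a $k$-rational variety $V$ whose generic fiber is geometrically integral, which is exactly the requirement of Definition~\ref{def:hu} for $X$ to be Hilbert-unirational. This establishes the first assertion. For the ``in particular'' statement, when $k$ is Hilbertian the set $V(k)$ is non-thin by virtue of $V$ being $k$-rational, and since $f$ has geometrically integral generic fiber, the preimage under $f$ of any thin subset of $X(k)$ is thin in $V(k)$ (cf.\ \cite[\S 3.2]{SER}); hence $X(k)$ itself cannot be thin, and $X$ has the Hilbert property.

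The main obstacle, just as in the proof of Theorem~\ref{thm:main1}, is the verification that the generic fibers of the intermediate maps in the tower are geometrically integral. This requires a careful geometric analysis of the degeneration loci of the pencils, and is precisely where the hypotheses on $P_0$ are used in an essential way, via Bertini-type arguments on the branch curves inherited at each stage from $\kappa$. Once this geometric input is in hand, the deduction of Hilbert-unirationality and of the Hilbert property over Hilbertian fields is formal.
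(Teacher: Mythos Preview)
Your high-level strategy---observe that the cover built en route to Theorem~\ref{thm:main1} already exhibits Hilbert-unirationality, and then invoke the standard implication over Hilbertian fields---is exactly what the paper does, and the logical reduction is sound. However, your description of \emph{what} that construction is and \emph{how} the key step is verified is substantially off, and if you tried to fill in the details as you have sketched them you would not arrive at a proof.

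The tower is not built from ``pencils obtained from tangent lines to the branch quartic'' or ``residual conics''; there are no conic bundles in the argument. The paper's construction uses two ingredients: the rational curve $C_{P_0}\in|{-2K_X-3P_0}|$ (Definition~\ref{def:dp2curves}) and a new ``points procedure'' $\phi\colon X\times X\dashrightarrow X$ defined via the group law on anticanonical (genus-$1$) curves, sending $(P,Q)$ to the point $R$ with $(R)\sim 2(\iota(P))-(Q)$ on $E_{P,Q}$. One then sets $f_1\colon\mathbb{P}^1\to C_{P_0}\subset X$, $f_2=\phi\circ(f_1,f_1)$, $f_3=\phi\circ(f_1,f_2)$, and it is the single map $f_3\colon(\mathbb{P}^1)^3\dashrightarrow X$ whose generic fiber is shown to be geometrically integral (Proposition~\ref{GIGF}). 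The intermediate maps are \emph{not} individually required to have geometrically integral generic fibers, contrary to what you write.

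More importantly, the proof of geometric integrality of the generic fiber of $f_3$ is not a Bertini-type argument on branch curves. It is a contradiction argument: a rational Stein factorization would produce a nontrivial finite cover $X'\to X$, which must be ramified since $X$ is algebraically simply connected; one then shows that the branch divisor of $X'\to X$ would have to be sent into a fixed curve $B$ by $\phi_Q$ for a Zariski-dense set of $Q\in C_{P_0}$, contradicting the key dominance result (Proposition~\ref{prop:drm}) that $\phi|_{C_{P_0}\times D}$ is dominant for every irreducible curve $D\subset X$. The hypotheses on $P_0$ enter through the analysis of singularities of $C_{P_0}$ and the elliptic surface $\mathcal{E}_P$, not through smoothness of conics or nondegeneracy of double-cover branch loci.
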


We immediately obtain the following corollary for del Pezzo surfaces of degree $1$.

\begin{corollary}\label{cor}
Let $X$ be a del Pezzo surface of degree $1$ over a field $k$ of characteristic zero.
Assume that one of the $240$ exceptional curves of $X$ is defined over $k$.
\begin{enumerate}[label={\em (\roman*)}]
\item If $k$ is a number field, then $X$ satisfies weak weak approximation.
\item If $k$ is Hilbertian, then $X$ satisfies the Hilbert property.
\end{enumerate}
\end{corollary}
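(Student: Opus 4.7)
The plan is to reduce to the degree-$2$ case addressed by Theorems~\ref{thm:main1} and~\ref{thm:main2}, by contracting the given $k$-rational exceptional curve on $X$. Call it $E \subset X$; by Castelnuovo's criterion I obtain a birational $k$-morphism $\pi \colon X \to Y$ contracting $E$ to a $k$-rational point $P_0 := \pi(E)$, where $Y$ is a smooth del Pezzo surface over $k$ of degree $K_Y^2 = K_X^2 + 1 = 2$ and $X \cong \mathrm{Bl}_{P_0} Y$.

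The main task is to verify that $(Y, P_0)$ satisfies the two hypotheses of Theorems~\ref{thm:main1} and~\ref{thm:main2}. For the condition that $P_0$ lie on at most $3$ of the $56$ exceptional curves of $Y$: in fact $P_0$ lies on none of them, for if an exceptional curve $C \subset Y$ passed through $P_0$, then its strict transform $\widetilde{C} \subset X$ would be a $(-2)$-curve orthogonal to $K_X$, contradicting the ampleness of $-K_X$. The condition that $P_0$ not lie on the ramification divisor $R$ of the anticanonical morphism $\kappa \colon Y \to \mathbb{P}^2$ will require more care, and this is the hardest step. I would argue by contradiction: supposing $P_0 \in R$, I let $T \subset \mathbb{P}^2$ be the tangent line to the branch quartic $B$ at $\kappa(P_0) \in B$. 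Writing $B$ locally as $\{y - \phi(x) = 0\}$ with $\phi(0) = \phi'(0) = 0$ and $\phi''(0) \neq 0$, so that $T = \{y = 0\}$ and $Y$ is locally $\{z^2 = y - \phi(x)\}$, a direct calculation shows that $\kappa^{-1}(T) \in |{-K_Y}|$ has an ordinary double point at $P_0$ (in particular, $m := \mathrm{mult}_{P_0}\kappa^{-1}(T) \geq 2$). The strict transform in $X$ then satisfies
\[
\widetilde{\kappa^{-1}(T)} \cdot (-K_X) \;=\; \kappa^{-1}(T) \cdot (-K_Y) - m \;\leq\; 2 - 2 \;=\; 0,
\]
contradicting the ampleness of $-K_X$.

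With both hypotheses verified, I would apply Theorem~\ref{thm:main1} (for part~(i)) or Theorem~\ref{thm:main2} (for part~(ii)) to $(Y, P_0)$ to conclude weak weak approximation (resp.\ Hilbert-unirationality, and hence the Hilbert property when $k$ is Hilbertian) for $Y$. The analogous conclusion for $X$ then follows from the birational invariance of weak weak approximation (over number fields) and of the Hilbert property (over Hilbertian fields of characteristic zero) for smooth projective varieties, applied to the birational $k$-morphism $\pi \colon X \to Y$.
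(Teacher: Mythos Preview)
Your proof is correct and follows the same approach as the paper: blow down the $k$-rational exceptional curve to obtain a degree-$2$ del Pezzo surface $Y$ with a $k$-point $P_0$ satisfying the hypotheses of Theorems~\ref{thm:main1} and~\ref{thm:main2}, then transfer the conclusion back to $X$ by birational invariance. The paper's argument is a single sentence asserting that $P_0$ satisfies the required hypotheses; you supply the verification in full (indeed showing the stronger fact that $P_0$ lies on \emph{no} exceptional curve of $Y$), and your local computation for the ramification-divisor condition is sound---note that the assumption $\phi''(0)\neq 0$ is not actually needed, since the local equation $z^2=-\phi(x)$ has multiplicity $\geq 2$ at the origin whenever $\phi(0)=\phi'(0)=0$, which is all you use.
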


Indeed, blowing down the exceptional curve in Corollary \ref{cor}, we obtain a point $P_0$ on the resulting degree-$2$ del Pezzo surface satisfying the hypotheses of Theorem \ref{thm:main1}.

\subsection{Conventions and definitions}
We denote by $\Spec R$ the spectrum of a ring $R$ with the Zariski topology. A scheme over $R$ is a scheme $X$ equipped with a morphism $X \rightarrow \Spec R$. We denote by $X\left(R\right)$ the set of $R$-rational points of $X$, i.e.\ the set of sections of $X \rightarrow \Spec R$. Given morphisms of schemes $X \rightarrow Z$ and $Y \rightarrow Z$, we denote by $X \times_Z Y$ their fiber product. Given a scheme $X$ over $R$ and a morphism $\Spec S \rightarrow \Spec R$, we denote by $X_S$ the fiber product $X \times_{\Spec R} \Spec S$, the base change of $X$ to~$S$. A variety over a field $k$ is a geometrically integral separated scheme of finite type over $k$. We denote by $\overline{k}$ a separable closure of $k$. When working on a variety $X$ over $k$, we use the word ``points'' to refer to points in $X\left(\overline{k}\right)$ unless specified otherwise (and the notation $P \in X$ will accordingly mean that $P\in X(\overline{k})$), following classical terminology as in \cite[Ch.~1]{HAR}.

\begin{mydef}
For a divisor $D$ and points $P_1,\ldots,P_n$ on a variety $X$, we denote by $|D-\sum_{i=1}^na_iP_i|$ the linear system of divisors in $|D|$ containing each $P_i$ with multiplicity at least $a_i$.
\end{mydef}

\begin{mydef} \label{def:ratim}
Let $\phi\colon  Y \dashrightarrow X$ be a rational map of varieties. The \emph{domain} of $\phi$ is defined to be the the maximal open set $U \subset Y$ on which $\phi$ is defined as a morphism. The \emph{image} $\phi(V)$ of a subset $V \subset Y$ is defined to be $\phi(U\cap V)$. The \emph{closed image} of $\phi$ is defined to be the closure of $\phi\left(U\right)$ in $X$.
\end{mydef}

For a scheme $X$ of finite type over a number field $K$ and $S \subset \Omega_K$ finite, a {\em model} for $X$ over $\mathcal{O}_{K,S}$ is the datum of a scheme $\mathcal{X} \to \Spec \mathcal{O}_{K,S}$ of finite type and an isomorphism $\iota\colon X \xrightarrow{\sim}\mathcal{X}_K = \mathcal{X} \times_{\mathcal{O}_{K,S}} K$. We will always (abusively) identify $X$ with $\mathcal{X}_K$ via $\iota$.

\subsection{Acknowledgements}
We thank Jean-Louis Colliot-Th\'el\`ene, Daniel Loughran, \linebreak Cec\'ilia Salgado and Alexei Skorobogatov for useful discussions and feedback. We thank the anonymous referee for useful comments that improved the quality of the paper. Meetings to complete this work were made possible by funding from ICMS Edinburgh and Pierre Le Boudec's SNSF Professorship grant. The first named author started this project while being a guest at the Max Planck Institute of Bonn, which he thanks for its wonderful hospitality and optimal working conditions, and continued while being supported by Pierre Le Boudec's SNSF Professorship grant. The second named author was supported by the University of Bristol and the Heilbronn Institute for Mathematical Research. The third named author was supported by UKRI Fellowship MR/T041609/2. 
\section{Background}
\subsection{Hilbert property}\label{subsec:hilbert property}

\begin{mydef}\label{def:thin}
Let $X$ be a quasi-projective variety over a field $k$, and take $A \subset X\left(k\right)$.

We say that $A$ is of \emph{type I} if $A \subset W\left(k\right)$ for some closed $W \subsetneq X$, i.e.\ $A$ is not dense.

We say that $A$ is of \emph{type II} if $A \subset \phi\left(Y\left(k\right)\right)$ for $Y$ a variety and $\phi \colon Y \rightarrow X$ a generically finite separable dominant morphism of degree $\geq 2$.

We say that $A$ is \emph{thin} if it is contained in a finite union of type I and II subsets.
\end{mydef}

\begin{mydef} \label{def:hp}
Let $X$ be as in the previous definition. We say that $X$ has the \emph{Hilbert property} (over $k$) if $X\left(k\right)$ is not thin.

We say that a field $k$ is \emph{Hilbertian} if there exists a variety $X/k$ with the Hilbert property.
\end{mydef}

A field $k$ is Hilbertian if and only if $\mathbb{P}^1_k$ has the Hilbert property \cite[Prop.~13.5.3]{FJ}. Number fields are Hilbertian by Hilbert's irreducibility theorem, while local fields, finite fields and algebraically closed fields are not (see \cite[Ch.~13]{FJ}).

Given a dominant morphism of $k$-varieties $f\colon  Y \rightarrow X$ with geometrically integral generic fiber and a non-thin subset $A \subset Y\left(k\right)$, the image $f\left(A\right) \subset X\left(k\right)$ is not thin. Indeed, the pullback to $Y$ of a cover $\phi\colon  V \rightarrow X$ as in the definition of type II thin sets is also dominant and generically finite of degree at least $2$, hence one may deduce this fact by contradiction. (See also \cite[Prop.~7.13]{CTS}.) Since rational varieties over Hilbertian fields satisfy the Hilbert property (using \cite[Thm.~13.4.2]{FJ} and the fact that the Hilbert property is a birational invariant), rationality of $Y$ implies that $X$ satisfies the Hilbert property. This leads to the following definition, put forward by Colliot-Th\'el\`ene in a talk given for the Algebra, Geometry and Physics seminar of HU Berlin/MPIM Bonn in 2021.

\begin{mydef}\label{def:hu}
We say that a variety $X$ over a field $k$ is \emph{Hilbert-unirational} if there exists a rational variety $Y$ over $k$ and a dominant morphism $f\colon  Y \rightarrow X$ such that $f$ has geometrically integral generic fiber.
\end{mydef}

From the above, it follows immediately that if $X$ is Hilbert-unirational and $k$ is Hilbertian, then $X$ has the Hilbert property.

\subsection{Weak weak approximation}

We recall the following definition.
\begin{mydef}\label{def:WWA}
We say that a smooth variety $X$ over a number field $K$ satisfies \emph{weak weak approximation} if $X(K)$ is dense in $\prod_{v \notin S} X\left(K_v\right)$ for some finite set $S$ of places of $K$.
\end{mydef}

\begin{mydef}We say that a scheme $Y$ of finite type over a perfect field is
\textsl{split} if there exists a non-empty geometrically integral open subscheme $U\subset Y$.
\end{mydef}

Analogously to the Hilbert property, the existence of rational covers of $X$ with certain properties guarantees weak weak approximation.

\begin{mydef}\label{def:su}
We say that a smooth variety $X$ over a field $k$ is \emph{split-unirational} if there exists a rational variety $Y$ over $k$ and a dominant morphism $f\colon  Y \rightarrow X$ such that:
\begin{itemize}
    \item[$(a)$] $f$ has geometrically integral generic fiber; and,
    \item[$(b)$] every birational modification $f'\colon Y' \to X'$  of $f$ with $Y'$ and $X'$ smooth has split fibers over all codimension-$1$ points.
\end{itemize}
\end{mydef}

The second condition is equivalent to $f$ having ``split reduction'' at every divisorial discrete valuation ring $R \in \operatorname{DVR}(k(X),k)$ (see \cite[\S4]{DS}). Denef \cite{DEN} proved that every $f$ satisfying $(a)$ and $(b)$ above, when $k$ is a number field $K$, is \emph{arithmetically surjective}.

\begin{mydef}\label{def:ar.surj.}
We say that a morphism $f\colon Y \rightarrow X$ of varieties over a number field $K$ is \emph{arithmetically surjective} if there exists a finite set of places $S$ such that we have $f(Y(K_v))=X(K_v)$ for all $v \notin S$.
\end{mydef}

By Denef's result, every split-unirational variety $X$ over a number field $K$ satisfies weak weak approximation: let $(P_v) \in \prod_{v \notin S} X(K_v)$ and $(Q_v) \in \prod_{v \notin S} Y(K_v)$ be such that $f(Q_v)=P_v$; approximating $(Q_v)$ by $Q \in Y(K)$, the point $P=f(Q)$ approximates $(P_v)$.  

We will use the following result, giving a sufficient condition for split-unirationality.

\begin{proposition}\label{prop:ssfcover}
    Let $X$ be a smooth proper variety defined over a number field $K$, and let $f\colon Y \to X$ be a proper morphism with $Y$ rational such that there exists an open subscheme $U \subset Y$ such that the restriction $f|_U\colon U \to X$ is smooth and has split fibers. Then $f$ is split-unirational, hence arithmetically surjective.
\end{proposition}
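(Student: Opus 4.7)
The plan is to verify conditions $(a)$ and $(b)$ of Definition~\ref{def:su} for the morphism $f$; arithmetic surjectivity then follows from Denef's theorem, cited above. Since $f|_U$ has split (in particular non-empty) fibers over every point of $X$, the map $f|_U$ is surjective, so $f$ is dominant. As both $Y$ and $X$ are integral (with $Y$ rational), the generic fiber $Y_\eta := Y \times_X \Spec K(X)$ is integral over $K(X)$. By hypothesis, the open subscheme $U_\eta \subset Y_\eta$ is split, containing a geometrically integral open $V$, which is dense in $Y_\eta$. Since $V$ is geometrically integral, the function field $K(V) = K(Y_\eta)$ is a separable extension of $K(X)$, forcing $Y_\eta$ to be geometrically reduced; combined with geometric irreducibility of $Y_\eta$ (inherited from that of $V$ by density), $Y_\eta$ is geometrically integral, verifying~$(a)$.

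For condition $(b)$, I plan to use the equivalence cited in the excerpt (\cite[\S4]{DS}) that $(b)$ holds if and only if $f$ has split reduction at every divisorial DVR $R \in \operatorname{DVR}(K(X), K)$. Fix such an $R$. By Hironaka's resolution of singularities, there exist a smooth projective model $X'$ of $X$ and a prime divisor $D' \subset X'$ with $R = \mathcal{O}_{X', D'}$; let $\xi'$ denote the generic point of $D'$ and $y \in X$ the center of $R$. Form the base change $Y \times_X X' \to X'$; its open subscheme $U_{X'} := U \times_X X'$ is smooth over $X'$, hence smooth over $K$. Applying Hironaka to $Y \times_X X'$ and keeping the resolution an isomorphism over the smooth locus (which contains $U_{X'}$), we obtain a smooth projective $Y'$ with a proper morphism $f'\colon Y' \to X'$ extending $f$ such that $U_{X'}$ embeds as an open subscheme of $Y'$.

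The fiber $f'^{-1}(\xi')$ then contains $U_{X'}|_{\xi'}$ as an open subscheme. Writing $\kappa(\cdot)$ for residue fields, a routine base change identifies
\[
U_{X'}|_{\xi'} \;=\; U \times_X \Spec \kappa(\xi') \;=\; U_y \otimes_{\kappa(y)} \kappa(\xi'),
\]
where $U_y := (f|_U)^{-1}(y)$ is split over $\kappa(y)$ by hypothesis. Since geometric integrality is preserved under base change of the ground field, the base change to $\kappa(\xi')$ of any geometrically integral open $W \subset U_y$ yields a geometrically integral open of $U_{X'}|_{\xi'}$, so $f'^{-1}(\xi')$ is split. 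This verifies split reduction at the arbitrary $R$, giving~$(b)$.

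The primary obstacle I anticipate is correctly invoking the cited equivalence between condition $(b)$ and split reduction at each divisorial DVR; without it, one is stuck quantifying over all smooth birational modifications rather than over a tractable set of DVRs. Once that reduction is in hand, the verification per DVR is a compact base-change-and-resolve construction, and splitness of the special fiber reduces cleanly to splitness of the hypothesized fiber $U_y$ of $f|_U$.
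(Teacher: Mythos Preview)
Your argument is correct. The paper's own proof is a one-line citation of \cite[Prop.~4.4]{DS}, so you have essentially unpacked that reference: you verify conditions $(a)$ and $(b)$ of Definition~\ref{def:su} directly and then invoke Denef's theorem, whereas the paper outsources both steps. Your verification of $(a)$ via the split open $V \subset U_\eta$ and your reduction of $(b)$ to the DVR criterion (using that the rational map $X' \dashrightarrow X$ is regular at the codimension-$1$ point $\xi'$ by properness of $X$, and that splitness of $U_y$ is preserved under the field extension $\kappa(y) \hookrightarrow \kappa(\xi')$) are sound. One small point worth making explicit: $Y \times_X X'$ need not be irreducible, so when you apply Hironaka you should first pass to the main component (the one birational to $Y$); this is harmless since $U_{X'}$, being smooth and dominating $X'$, lies in that component. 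Otherwise the argument is complete and more self-contained than the paper's.
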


\begin{proof}
This follows directly from \cite[Prop.~4.4]{DS}.
\end{proof}

\begin{remark}
    The specific instance of split-unirationality in Proposition \ref{prop:ssfcover} is supported by a folklore argument (not invoking Denef's result) combining the Lang--Weil--Nisnevich estimates and Hensel's lemma. A sketch can be found in \cite[pp.\ 42-43]{CTSSK}. We present a detailed proof here for completeness. In the proof of the lemma we use the terminology of schemes instead of varieties (and so points refer to scheme-theoretic points). 
\end{remark}

\begin{lemma}\label{lem:arithmetically surjective}
    Let $K$ be a number field and $f\colon V \to Y$ a smooth morphism with split fibers onto a smooth proper $K$-variety $Y$. Then $f$ is arithmetically surjective.
\end{lemma}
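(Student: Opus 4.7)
The plan is to combine spreading-out with the Lang--Weil--Nisnevich estimates and Hensel's lemma, in the spirit of the folklore argument sketched in \cite[pp.~42--43]{CTSSK}. First I would fix a finite set $S_0 \subset \Omega_K$ large enough that $Y$ admits a smooth proper model $\mathcal{Y} \to \Spec \mathcal{O}_{K,S_0}$ and that $f$ extends to a smooth morphism $\tilde f\colon \mathcal{V} \to \mathcal{Y}$. For any place $v \notin S_0$ and any point $y_v \in Y(K_v)$, the valuative criterion of properness produces a unique $\tilde y_v \in \mathcal{Y}(\mathcal{O}_v)$ lifting $y_v$; let $\bar y_v \in \mathcal{Y}(\mathbb{F}_v)$ denote its reduction. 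Applying Hensel's lemma to the smooth $\mathcal{O}_v$-scheme $\mathcal{V} \times_{\mathcal{Y},\tilde y_v} \Spec \mathcal{O}_v$ reduces the task of finding a $K_v$-point of $V$ above $y_v$ to producing an $\mathbb{F}_v$-point of the smooth $\mathbb{F}_v$-variety $\mathcal{V}_{\bar y_v}$.

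To guarantee such an $\mathbb{F}_v$-point for $v$ outside some finite enlargement $S_1 \supset S_0$, I would show that the fiber $\mathcal{V}_{\bar y}$ is split over $\kappa(\bar y)$ for every closed point $\bar y$ of $\mathcal{Y}$ over $\mathcal{O}_{K,S_1}$. This is achieved by Noetherian induction on $Y$: applying the split-fibers hypothesis at the generic point of each stratum that arises, spreading out the chosen geometrically integral open, and using the constructibility of the ``geometrically integral fiber'' locus (EGA~IV, \S9) to iterate on the proper closed complement, one obtains a finite stratification $Y = \bigsqcup_i Y_i$ by locally closed subvarieties together with opens $W_i \subset f^{-1}(Y_i)$ such that $W_i \to Y_i$ has geometrically integral fibers. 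Spreading the whole picture out over $\mathcal{O}_{K,S_1}$ yields analogous strata $\mathcal{Y}_i \subset \mathcal{Y}$ and opens $\mathcal{W}_i \subset \tilde f^{-1}(\mathcal{Y}_i)$ of uniformly bounded geometric complexity.

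For $v \notin S_1$, each $\bar y_v$ then lies in a unique stratum $\mathcal{Y}_i$, and the fiber of $\mathcal{W}_i$ over $\bar y_v$ is a smooth geometrically integral $\mathbb{F}_v$-variety of bounded complexity; the Lang--Weil--Nisnevich estimates supply an $\mathbb{F}_v$-rational point on it as soon as $|\mathbb{F}_v|$ is sufficiently large, so removing a further finite set of places produces the desired $S \supset S_1$. The main obstacle is arranging this spreading-out uniformly enough that the Lang--Weil--Nisnevich constant depends only on the generic data of $f$, which requires careful book-keeping with the constructibility of geometrically integral fibers in families but is otherwise routine; once it is in place, properness of $\mathcal{Y}$ converts $K_v$-points of $Y$ into $\mathbb{F}_v$-reduction data, and Hensel's lemma does the rest.
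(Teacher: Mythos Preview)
Your proposal is correct and follows essentially the same approach as the paper: spreading out, Noetherian induction to stratify into pieces with geometrically integral fibers, Lang--Weil--Nisnevich to produce $\mathbb{F}_v$-points, and Hensel's lemma plus properness to lift. The only cosmetic difference is that the paper runs the Noetherian induction directly on the integral model $\mathcal{Y}$ (and is explicit about embedding each geometrically integral open as $Z_1\setminus Z_2$ in projective space so that flatness keeps the Hilbert polynomial, hence the Lang--Weil constants, uniform), whereas you stratify $Y$ over $K$ first and then spread out the whole stratification; both orderings work.
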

\begin{proof}
We first prove that, given $\mathcal{Y}$ a reduced $\mathcal{O}_{K,S}$-scheme of finite type and $\phi\colon \mathcal{V} \to \mathcal{Y}$ a morphism of finite type with split fibers (we require neither smoothness of $\phi$ nor properness of $\mathcal{Y}$), then, after possibly enlarging the finite set $S$, the morphism $\phi\colon \mathcal{V}(\F_v) \to \mathcal{Y}(\F_v)$ is surjective for all $v \notin S$. We use Noetherian induction on $\mathcal{Y}$. 
    
Let $\eta \in \mathcal{Y}$ be a generic point, and $V_1$ be a non-empty geometrically integral open subscheme of the generic fiber $\phi^{-1}(\eta)$. We may assume that $V_1$ is affine, thus quasi-projective, i.e.\ that there exists an $\kappa(\eta)$-isomorphism $V_1 \cong Z_1 \setminus Z_2 \subset \P^n_{\kappa (\eta)}$, with $Z_1,Z_2$ closed in $\P^n_{\kappa(\eta)}$ (where $\kappa(\eta)$ is the residue field of $\eta$). We may take $Z_1$ to be the Zariski-closure of $V_1$, and thus assume that it is geometrically integral and that $\dim Z_2 < \dim Z_1$.  We may then spread out the following statement:
    \begin{center}
        there exists a $\kappa (\eta)$-isomorphism $V_1 \cong Z_1\setminus Z_2 \subset \P^n_{\kappa (\eta)}$, with $Z_1,Z_2$ ($\dim Z_2 < \dim Z_1$) closed in $\P^n_{\kappa (\eta)}$ and flat over $\Spec \kappa (\eta)$, and such that the fibre $Z_1 \to \Spec \kappa (\eta)$ is geometrically integral
    \end{center}
(the flatness over $\Spec \kappa (\eta)$ is automatic as this is the spectrum of a field) to obtain an open $\mathcal{U} \subset \mathcal{Y}$ (containing $\eta=\Spec \kappa (\eta)$), and an open subscheme $\mathcal{V}_1\subset \phi^{-1}(\mathcal{U})$ such that:
    \begin{center}
        there exists a $\mathcal{U}$-isomorphism $\mathcal{V}_1 \cong \mathcal{Z}_1\setminus \mathcal{Z}_2 \subset \P^n_{\mathcal{U}}$,
        with $\mathcal{Z}_1,\mathcal{Z}_2$ ($\dim \mathcal{Z}_2< \dim \mathcal{Z}_1$) closed in $\P^n_{\mathcal{U}}$ and flat over $\Spec \mathcal{U}$, and such that $\mathcal{Z}_1 \to \Spec \mathcal{U}$ has geometrically integral fibers.
    \end{center}

We refer to \cite[\S3.2]{POO} (and Table 1 at pp.\ 306-307 of {\em loc.\ cit.}) for the proof that all the required properties actually spread out, thus showing the existence of such a $\mathcal{U}$ and $\mathcal{V}_1$. We set $n_i:= \dim Z_i= \dim \mathcal{Z}_i- \dim \mathcal{U}, i=1,2.$ Note that $n_2<n_1$.

For any finite field $\mathbb{F}_q$, the Lang--Weil--Nisnevich theorem gives an asymptotic estimate, for every geometrically integral closed subscheme $Z \subset \P^n_{\F_q}$ \cite[Thm.~1]{LW}(resp.\ for every reduced closed subscheme $Z \subset \P^n_{\F_q}$ \cite[Lem.~1]{LW}) of the form
    \[
    \# Z(\F_q)= q^{\dim Z} + O_{\dim Z,n,d}(q^{\dim Z- \frac{1}{2}})
    \]
(resp.\ $\# Z(\F_q)=O_{\dim Z,n,d}(q^{\dim Z})$), where the implicit constant in the ``big $O$'' depends only on $\dim Z, n$ and the degree $d$ of $Z$. Since the Hilbert polynomial (from which dimension and degree may be inferred) remains constant in flat projective families \cite[Thm.~III.9.9]{HAR} (in particular, the families $\mathcal{Z}_1 \to \mathcal{U}$ and $\mathcal{Z}_2 \to \mathcal{U}$), these estimates show that there exists some constant $N_0 >0$ such that for all non-archimedean $v$ with norm $q_v > N_0$ (which we may assume to hold for all $v \notin S$ after enlarging $S$), the $n_1$-dimensional fibers of $\mathcal{Z}_1 \to \mathcal{U}$ have more $\F_v$-points than the $n_2$-dimensional ($n_2 <n_1$) fibers of $\mathcal{Z}_2 \to \mathcal{U}$. Since $\mathcal{V}_1 \cong \mathcal{Z}_1 \setminus \mathcal{Z}_2$ as $\mathcal{U}$-schemes, we thus obtain that all fibers of $\mathcal{V}_1(\F_v) \to \mathcal{U}(\F_v)$ are non-empty, hence that the image of $\mathcal{V}(\F_v) \to \mathcal{Y}(\F_v)$ contains $\mathcal{U}(\F_v)$. 
    
Applying the Noetherian induction hypothesis on $\phi$ restricted over the complement $\mathcal{Y}'$ of $\mathcal{U}$ in $\mathcal{Y}$, we may assume (after enlarging $S$ if necessary) that the image of $\mathcal{V}(\F_v) \to \mathcal{Y}(\F_v)$ contains $\mathcal{Y}'(\F_v)=\mathcal{Y}(\F_v) \setminus \mathcal{U}(\F_v)$. Combined with the above, this proves surjectivity.
    
We now show how to deduce the lemma. Let $S \subset \Omega_K$ be finite and such that there exist $\mathcal{O}_{K,S}$-models $\mathcal{V}$ and $\mathcal{Y}$ of $V$ and $Y$ and $f$ extends to a morphism $\phi\colon \mathcal{V} \to \mathcal{Y}$. Spreading out the property of having split fibers (that this spreads out follows from spreading out geometric integrality of fibers \cite[Thm.~9.7.7]{EGA} after restriction to an open intersecting the generic fiber in a geometrically integral subscheme) and using the just proven statement, we may assume (after possibly enlarging $S$) that $\phi(\mathcal{V}(\F_v)) = \mathcal{Y}(\F_v)$ for all $v \notin S$.

After enlarging $S$ again, we may also assume by spreading out that $\phi$ is smooth and $\mathcal{Y}$ is proper over $\mathcal{O}_{K,S}$. Let $v \in \Omega_K \setminus S$ and $P_v$ be a point in $Y(K_v)$. Since $\mathcal{Y}$ is proper, $P_v$ extends to an $\mathcal{O}_v$-point of $\mathcal{Y}$. The restriction of $\phi$ over this $\mathcal{O}_v$-point gives a smooth model $\mathcal{Y}_{P_v} \to \Spec \mathcal{O}_v$ of the $K_v$-variety $\phi^{-1}(P_v):=Y_{P_v}$. By the surjectivity above, we know that this smooth $\mathcal{O}_v$-model has an $\F_v$-point $\Tilde{Q}$ on its special fiber. Choosing a standard smooth neighborhood (see \cite[Tags~01V5,~01V7]{SP}) $\Spec \mathcal{O}_v[x_1,\ldots,x_N]/(f_1,\ldots,f_M), \ M \leq N$ of $\Tilde{Q}$ in the smooth $\mathcal{O}_v$-scheme $\mathcal{Y}_{P_v}$, we may use Hensel's lemma to lift the smooth $\F_v$-point $\Tilde{Q}:=(x_1(\Tilde{Q}),\ldots,x_N(\Tilde{Q}))$ to an $\mathcal{O}_v$-point of $\Spec \mathcal{O}_v[x_1,\ldots,x_N]/(f_1,\ldots,f_M) \subset \mathcal{Y}_{P_v}$, which then induces a $K_v$-point on the generic fiber $Y_{P_v}$ of the model $\mathcal{Y}_{P_v}$. Thus ${Y}_{P_v}(K_v)\neq \emptyset$ and $V(K_v) \to Y(K_v)$ is surjective ($P_v$ was arbitrary), as wished.
\end{proof}

We summarize the previous two subsections with the following proposition.
\begin{proposition}\label{prop:goodcover}
A Hilbert-unirational (resp.\ split-unirational) variety $X$ over a Hilbertian field (resp.\ a  number field) satisfies the Hilbert property (resp.\ weak weak approximation).
\end{proposition}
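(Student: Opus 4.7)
The plan is to assemble the ingredients already collected in Subsections \ref{subsec:hilbert property} and 2.2; both halves of the proposition are essentially formal consequences of the preparatory material.

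For the Hilbert property statement, suppose $X$ is Hilbert-unirational over a Hilbertian field $k$, and let $f\colon Y\to X$ witness this. The Hilbert property is a birational invariant, and $\mathbb{P}^1_k$ has it over a Hilbertian field by \cite[Prop.~13.5.3]{FJ}; since $Y$ is rational, $Y(k)$ is non-thin. The fact recalled at the start of Subsection \ref{subsec:hilbert property} — that the image under a dominant morphism with geometrically integral generic fiber of a non-thin set is non-thin — then yields that $f(Y(k))\subset X(k)$ is non-thin, so $X$ has the Hilbert property.

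For the weak weak approximation statement, suppose $X$ is split-unirational over a number field $K$ with witness $f\colon Y\to X$. By Denef's theorem \cite{DEN} (invoked in Subsection 2.2), $f$ is arithmetically surjective, so there is a finite $S\subset\Omega_K$ such that $f\colon Y(K_v)\to X(K_v)$ is surjective for all $v\notin S$. Given any $(P_v)_{v\notin S}\in\prod_{v\notin S} X(K_v)$, lift it to $(Q_v)\in\prod_{v\notin S} Y(K_v)$. Since $Y$ is $K$-rational, it satisfies weak weak approximation (using the birational invariance of this property among smooth varieties together with the trivial case of $\mathbb{P}^n_K$); after possibly enlarging $S$, we can approximate $(Q_v)$ by some $Q\in Y(K)$. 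Then $f(Q)\in X(K)$ approximates $(P_v)$ by continuity of $f$ at each $Q_v$ in the $v$-adic topology, as required.

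Since the proof reduces to citing the previous subsections and applying Denef's arithmetic-surjectivity input, I do not foresee any real obstacle. The only step whose input is non-formal is Denef's theorem; for the covers arising in the application to del Pezzo surfaces, an independent (less general) proof is moreover furnished by Lemma \ref{lem:arithmetically surjective}, so the proposition is unconditional in that setting.
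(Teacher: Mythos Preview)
Your proof is correct and follows essentially the same approach as the paper, which presents Proposition~\ref{prop:goodcover} as a summary of the preceding two subsections without a separate proof. The only cosmetic difference is that the paper implicitly uses full weak approximation for the rational variety $Y$ (so no enlargement of $S$ is needed), whereas you invoke weak weak approximation; this is harmless.
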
 

We will prove Theorem \ref{thm:main1} by constructing a split-unirational cover of the del Pezzo surface $X$. Since a split-unirational cover is a Hilbert-unirational cover, our construction also gives Theorem \ref{thm:main2}. However, a Hilbert-unirational cover already appears as an intermediate stage in our construction, hence we will prove Theorem \ref{thm:main2} first.

\subsection{Del Pezzo surfaces}

\begin{mydef}
Let $X$ be a smooth projective geometrically integral variety over a field~$k$. We say that $X$ is a \emph{Fano variety} if the anticanonical class $-K_X$ is ample. A \emph{del Pezzo surface} is a Fano variety of dimension two.
\end{mydef}

For the rest of this section, let $X$ be a del Pezzo surface over a field $k$. The surface $X_{\overline{k}}$ is isomorphic either to $\mathbb{P}^1 \times \mathbb{P}^1$ or to the blowup of $\mathbb{P}^2$ of at most eight points in general position (see \cite[\S9.4]{POO}).

\begin{mydef}
The \emph{degree} of $X$ is the integer $d = d\left(X\right) := \left(K_X\right) \cdot \left(K_X\right)$, where $\cdot$ denotes the intersection product on $\Pic \left(X\right)$.
\end{mydef}

It can be easily shown that we have $d\left(X\right) \in \{1,\dots,9\}$ \cite[Lem.~24.3.1]{MAN}.

\subsubsection{Anticanonical morphism}
The linear system $|{-K_X}|$ is base-point-free if and only if $d \neq 1$ \cite[Prop.~III.3.4.1]{KOL}. Since base-point-free linear systems induce morphisms to projective space \cite[\S II.7]{HAR}, we may consider for any del Pezzo surface $X$ of degree $d \geq 2$ the anticanonical morphism $\kappa\colon  X \rightarrow \mathbb{P}^d$ (it follows from \cite[Cor.~III.3.2.5.2]{KOL} that the dimension of $|{-K_X}|$ is $d = K_X^2$).

For $d \geq 3$ the class $-K_X$ is very ample, hence $\kappa$ is an embedding. In particular, when $d=3$, the image of $\kappa$ is a smooth cubic surface in $\mathbb{P}^3$.

When $d=1$, the linear system $|{-K_X}|$ consists of curves of arithmetic genus $1$ and has a unique, $k$-rational base point, thus inducing a rational map to $\mathbb{P}^1$ away from the base point. Blowing up the base point gives a geometrically rational elliptic surface over $\mathbb{P}^1$.

When $d=2$, the morphism $\kappa\colon  X \rightarrow \mathbb{P}^2$ is finite of degree $2$ and ramifies over a quartic curve $\mathcal{B}$. We denote by $\mathcal{R} \subset X$ the ramification divisor of $\kappa$, which we call the \textsl{ramification divisor of $X$}. The surface $X$ is of the form
$$
X:  w^2 +fw= g \subset \mathbb{P}\left(1,1,1,2\right)
 $$
 in the weighted projective space $\mathbb{P}\left(1,1,1,2\right)$ with variables $x,y,z,w$, where $f,g\in k[x,y,z]$ are homogeneous polynomials of degrees 2 and 4, respectively. The curve $\mathcal{B}\subset\mathbb{P}^2$ is then defined by $f^2+4g=0$, and it is smooth when char $k\neq2$.
 The morphism $\kappa$ induces the \emph{Geiser involution} $\iota\colon  X \rightarrow X$, which swaps the two sheets of the double cover.
\noindent The embedding $\kappa_2\colon  X \hookrightarrow \mathbb{P}^6$ induced by $-2K_X$ realizes $X$ as an octic (degree-$8$) surface contained in the cone over the Veronese surface in the hyperplane $x_6 = 0$ (identified with $\mathbb{P}^5$) with vertex $[0:\ldots:0:1]$. Denoting the 2-uple embedding by $\rho_2$ and the aforementioned hyperplane projection by $\pi_6$, we have the following commutative diagram:
 
\begin{center} 
\begin{tikzcd}
X \arrow[r,hook,"\kappa_2"] \arrow[d,"\kappa"] & \mathbb{P}^6 \arrow[d,dotted,"\pi_6"] \\
\mathbb{P}^2 \arrow[r,hook,"\rho_2"] & \mathbb{P}^5 \\
\end{tikzcd}
\end{center}
 \subsubsection{Lines}
\begin{mydef}
    Let $X$ be a del Pezzo surface. An irreducible curve $E \subset X$ is called an \emph{exceptional curve} (or \emph{line}) on $X$ if $E \cdot E = -1$ (equiv.\ $ E \cdot K_X = -1$). 
\end{mydef}

The terminology ``line'' comes from the study of exceptional curves on the surfaces studied by del Pezzo himself, namely those of degree $d \geq 3$, for which the exceptional curves on $X$ are precisely the lines on $X$ under its anticanonical embedding.

When $d=2$, there are $56$ exceptional curves coming in $28$ Geiser-involute pairs. Following \cite{STVA}, we define a \textsl{bitangent line to $\mathcal{B}$} to be a line $L$ such that $\kappa^{-1}(L)$ is geometrically reducible (when char $k\neq2$, these are the lines that have even intersection multiplicity everywhere with $\mathcal{B}$). Each of the 28 pairs of exceptional curves is the inverse image under $\kappa$ of a bitangent line to $\mathcal{B}$ \cite[Lem.~2.1]{STVA}. (See \cite[\S6.1.1]{DOL} for more on the existence and configuration of the bitangents of a smooth plane quartic.) For these surfaces the maximal number of concurrent exceptional curves is $4$, motivating the following definition.
 
 \begin{mydef}
 A point $P \in X\left(\overline{k}\right)$ on a degree-$2$ del Pezzo surface $X$ over a field $k$ is a \emph{generalized Eckardt point} if it lies on $4$ of the $56$ exceptional curves of $X$.
 \end{mydef}
 
 This etymology is rooted in the study of smooth projective cubic surfaces (del Pezzo surfaces of degree $3$). It is a famous result (the \emph{Cayley--Salmon theorem}) that a smooth projective cubic surface $W \subset \mathbb{P}^3$ (over an algebraically closed field) contains exactly $27$ lines. Any point on a smooth cubic surface lies on at most $3$ of the $27$ lines, and points lying on $3$ of the lines are known as \emph{Eckardt points}.

\begin{remark}
As with Eckardt points, generalized Eckardt points are rare: on a general del Pezzo surface of degree $2$ there are no generalized Eckardt points (see \cite[Ex.~7]{STVA}).
\end{remark}
\section{Two geometric procedures}\label{sec:twogeomproc}
In this section, we describe two geometric constructions which we will use to build covers of a degree-$2$ del Pezzo surface with good geometric properties. 

We begin with analogous procedures for cubic surfaces (which we informally name the ``point procedure'' and ``rational curve procedure'') and their relationship to arithmetic.

\subsection{Cubic surfaces}
Let $W$ be a smooth cubic surface over an infinite field $k$ in $\mathbb{P}^3$ with $W\left(k\right) \neq \emptyset$. We illustrate two geometric procedures employed by Segre \cite{SEG} and Manin \cite{MAN} to prove that $W$ is unirational, and explain how these procedures were employed by Swinnerton-Dyer \cite{SD} to demonstrate that $W$ is Hilbert-unirational. First, we give a map which propagates points (the ``point procedure'').
\begin{mydef}\label{def:dp3pts}
Define $\phi_W\colon  W \times W \dashrightarrow W$ to be the map sending two distinct points $P,Q \in W$ to the third point of the intersection $L_{P,Q} \cap W \subset \mathbb{P}^3$ (where defined), where $L_{P,Q}$ is the line in $\mathbb{P}^3$ through $P$ and $Q$.
\end{mydef}
Here the intersection $L_{P,Q} \cap W$ is understood scheme-theoretically, so that it may include a non-reduced point. Provided that $L_{P,Q}$ is not one of the $27$ lines of $W$, the intersection $L_{P,Q} \cap W$ does indeed consist of three points with multiplicity.
\begin{remark}
Note that $\phi_W$ operates along anticanonical curves of $W$ in the following sense: given general $P,Q \in W$, there exists a pencil $|{-K_W - P -Q}|$ of plane sections of $W$ containing $P$ and $Q$, namely those cut out by planes containing the line $L_{P,Q}$. The point $\phi_W\left(P,Q\right)$ is the third point at which all of these curves meet, i.e.\ the third and final base-point of $|{-K_W - P -Q}|$.
\end{remark}

We now describe the ``rational curve procedure'' that produces a rational curve from a rational point.
\begin{mydef}\label{def:dp3curves}
Let $P \in W(k)$ be a rational point not on any line. Define $C_P$ to be the intersection $T_PW \cap W$, where $T_P W$ is the tangent plane to $W$ at $P$. Then $C_P$ is an irreducible singular cubic curve in the plane $T_P W$, hence rational. Define $\psi_P\colon  \mathbb{P}^1 \rightarrow W$ to be an associated non-constant birational morphism (coming from desingularising $C_P$).
\end{mydef}
\begin{remark}
The curve $C_P \subset W$ admits an alternative definition, as observed by Manin \cite[Proof~of~Thm.~29.4]{MAN}: provided that $P$ does not lie on any of the $27$ lines of $W$, we may blow up $W$ at $P$ to obtain a del Pezzo surface $X$ of degree $2$. Letting $E \subset X$ denote the exceptional divisor of this blowup, $E$ is one of the $56$ lines of $X$, which come in $28$ Geiser-conjugate pairs $\left(L,-K_X-L\right)$. The curve $C_P$ is the image on $W$ of the Geiser conjugate $-K_X - E$ of $E$.

\end{remark}
\subsubsection{Unirationality}
Combining the two procedures above leads to the following result.

\begin{theorem}\label{thm:unir} \cite[Thm.~29.4,~Thm.~30.1]{MAN}
Let $W$ be a smooth cubic surface over a field $k$ which has at least $37$ elements, and suppose that $W(k) \neq \emptyset$. Then $W$ is unirational.
\end{theorem}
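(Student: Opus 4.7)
The plan is to combine the point procedure with the rational curve procedure described above to produce a rational variety that dominates $W$. First, starting from any rational point and iterating $\phi_W$, I would produce a rational point $P_0 \in W(k)$ lying on none of the $27$ exceptional lines of $W_{\overline{k}}$. This is where the hypothesis $|k| \geq 37$ enters: the union of the $27$ lines has at most $27(|k|+1)$ rational points, and a Segre--Manin-style counting argument (together with a careful treatment of the finitely many configurations where $\phi_W$ is undefined or lands on a line) shows that the orbit of an initial $k$-point under iterated application of $\phi_W$ must eventually escape the union of these lines, provided $k$ is large enough.

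With such a $P_0$ at hand, I would apply the rational curve procedure to obtain $C_{P_0} = T_{P_0}W \cap W$, which, since $P_0$ lies on no line of $W$, is a geometrically irreducible plane cubic with a singularity at $P_0$, hence birationally parametrised by $\psi_0 \colon \mathbb{P}^1_k \to C_{P_0}$ via the pencil of lines in $T_{P_0}W$ through $P_0$. To globalise this construction, I would form the incidence variety
$$
V := \{\,(t, Q) \in \mathbb{P}^1_k \times W : Q \in T_{\psi_0(t)} W \cap W\,\},
$$
with projections $\pi_1 \colon V \to \mathbb{P}^1_k$ and $\pi_2 \colon V \to W$. For general $t$ the fibre $\pi_1^{-1}(t) = C_{\psi_0(t)}$ is an irreducible cubic singular at $\psi_0(t)$, hence rational; the section $t \mapsto (t, \psi_0(t))$ of $\pi_1$ then realises $V$ as birational to $\mathbb{P}^1_k \times \mathbb{P}^1_k$, so $V$ is $k$-rational.

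Next, I would verify that $\pi_2 \colon V \to W$ is dominant. The family of tangent planes $\{T_{\psi_0(t)}W\}_{t \in \mathbb{P}^1_k}$ is non-constant in $t$ (tangent planes separate nearby points of $W$ away from a proper closed subset), so the plane sections $C_{\psi_0(t)}$ sweep out a two-dimensional subvariety of $W$; by irreducibility this subvariety is $W$ itself. Composing, I obtain a dominant rational map from the rational variety $V$ onto $W$, and hence unirationality.

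The main obstacle will be the first step: producing the good point $P_0$ requires a careful bookkeeping of how orbits under $\phi_W$ interact with the $27$ lines and with the indeterminacy locus of $\phi_W$, and it is here that the precise threshold $|k|\geq 37$ becomes sharp. The geometric ingredients of the remaining steps are robust and essentially formal once $P_0$ is in hand; the arithmetic content of Manin's theorem lives entirely in the point-propagation step.
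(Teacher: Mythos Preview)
Your construction of the unirational cover from a point $P_0$ lying on no line is correct and is essentially the argument the paper summarises: iterate the rational curve procedure twice, first through $P_0$ and then through the generic point of $C_{P_0}$, obtaining a rational surface dominating $W$.

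The gap is in your first step. The map $\phi_W$ has source $W \times W$, so ``the orbit of an initial $k$-point under iterated application of $\phi_W$'' is not well-defined: from a single $P \in W(k)$ you cannot feed $\phi_W$ at all, and your counting bound $27(|k|+1)$ is irrelevant until you have produced a supply of rational points to count. If $P$ happens to lie on three lines (an Eckardt point), the tangent-plane section $C_P$ degenerates to those three lines and yields no new $k$-points either, so the rational curve procedure alone does not bootstrap you out.

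Manin's actual argument, as sketched in the paper, is more delicate: one passes to a suitable quadratic extension $k'/k$ to obtain a Galois-conjugate pair of points, applies the rational curve procedure over $k'$ to generate infinitely many such conjugate pairs, and then applies $\phi_W$ to conjugate pairs $(Q,\sigma Q)$ so that the output $\phi_W(Q,\sigma Q)$ is Galois-invariant, hence lies in $W(k)$. A counting argument over $k'$ (this is where $|k| \geq 37$ enters) then guarantees that some such output avoids the $27$ lines. You should replace your vague orbit argument with this quadratic-extension descent; the remainder of your proof can stand as written.
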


Koll\'ar \cite{KOL} later extended this result to include smaller finite fields, thus settling unirationality for all cubic hypersurfaces with a rational point.
The proof of Theorem~\ref{thm:unir} supplied by Manin in \cite{MAN}, originating from ideas of Segre \cite{SEG}, comes in two parts: proving unirationality given a rational point avoiding all $27$ lines (which is done over any field, not necessarily infinite), and proving that one rational point is enough to deduce the existence of a point lying on none of the lines (which is done over large enough finite fields and infinite fields). For the first part, one simply iterates the rational curve procedure in Definition \ref{def:dp3curves} twice, once through the initial rational point and once through its generic point. For the second part, one employs the map $\phi_W$ in Definition \ref{def:dp3pts} in order to produce two points conjugate over a quadratic extension and from them two curves, picking up infinitely many points at the cost of moving to a higher-degree field. By then employing the map $\phi_W$ on conjugate pairs, one is able to find one pair for which the output is a rational point over the original ground field. For an explanation of these and other techniques used in the study of unirationality for cubic surfaces and their higher-dimensional analogues, the reader is encouraged to consult the aforementioned work of Koll\'ar \cite{KOL}, who employs and extends the techniques of Segre--Manin.

\subsubsection{Hilbert-unirationality}
With the same tools, Swinnerton-Dyer was able to prove that cubic surfaces satisfying the hypotheses of Theorem \ref{thm:unir} are Hilbert-unirational. 
\begin{proposition}[{\cite[Lem.~7]{SD}}]\label{Prop: SD}
Let $W$ be a smooth cubic surface over a field $k$, let $P_0 \in W\left(k\right)$ be a rational point not on any line of $W$, and denote by $\psi$ the morphism $\psi_{P_0}\colon  \mathbb{P}^1 \rightarrow W$ as defined above. Define $Z \subset W \times \mathbb{P}^1 \times \mathbb{P}^1 \times W$ to be the set
$$
\left\{\left(P_1,Q_2,Q_3,P_4\right) \in W \times \mathbb{P}^1 \times \mathbb{P}^1 \times W   : \;\begin{gathered}
    P_1 \neq \psi(Q_2),\  \psi(Q_3) \text{ does not lie on any line, } \\
    \phi_W\left(\psi\left(Q_2\right),P_4\right) = P_1, \text{ and }P_4 \in C_{\psi\left(Q_3\right)}.\end{gathered} \right\}.
$$
Let $\pi_1\colon  Z \rightarrow W$ denote the restriction of projection to the first factor of $W \times \mathbb{P}^1 \times \mathbb{P}^1 \times W$. Then $Z$ is a rational variety and the generic fiber of $\pi_1$ is geometrically integral. That is, $W$ is Hilbert-unirational.
\end{proposition}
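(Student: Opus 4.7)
The plan is to give an explicit birational parameterization of $Z$ by affine $3$-space and then to analyze the fibers of $\pi_1$ using the involutory nature of the point procedure $\phi_W$.

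\textbf{Rationality of $Z$.} I would construct a birational map $\mathbb{A}^3 \dashrightarrow Z$ whose three coordinates $(Q_2, Q_3, t)$ determine a point of $Z$ via $P_4 := \psi_{\psi(Q_3)}(t)$ (which lies on $C_{\psi(Q_3)}$ by construction) and $P_1 := \phi_W(\psi(Q_2), P_4)$. The inverse reads off $Q_2$ and $Q_3$ directly from a tuple in $Z$ and recovers $t$ via the identification $C_{\psi(Q_3)} \cong \mathbb{P}^1$ afforded by $\psi_{\psi(Q_3)}$. Thus $Z$ is birational to $\mathbb{A}^3$, hence rational.

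\textbf{Reduction of the generic fiber.} For fixed $A \in W$ not on a line of $W$, the map $\phi_W(A, \cdot)\colon W \dashrightarrow W$ is a birational involution: the three collinear points $A, B, \phi_W(A, B) \in W$ determine each other symmetrically. Fixing a generic $P_1 \in W$, the condition $\phi_W(\psi(Q_2), P_4) = P_1$ becomes $P_4 = \phi_W(\psi(Q_2), P_1)$, expressing $P_4$ as a rational function of $Q_2$. Hence the fiber of $\pi_1$ over $P_1$ projects birationally onto a curve $F_{P_1}$ in $\mathbb{P}^1_{Q_2} \times \mathbb{P}^1_{Q_3}$ cut out by the single condition $\phi_W(\psi(Q_2), P_1) \in T_{\psi(Q_3)} W$ (using that $C_Q = T_Q W \cap W$).

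\textbf{Geometric integrality of $F_{P_1}$.} Reducedness for generic $P_1$ follows in characteristic zero from dominance of $\pi_1$ and generic smoothness. For geometric irreducibility, I would specialize $P_1$ to a carefully chosen $P_1^*$ and verify by direct computation that $F_{P_1^*}$ has a single geometric irreducible component; upper-semicontinuity of the number of geometric components in the family $\{F_{P_1}\}_{P_1 \in W}$ would then extend the conclusion to generic $P_1$. The main obstacle is precisely this irreducibility verification: $F_{P_1}$ can be viewed as a finite cover of $\mathbb{P}^1_{Q_2}$ whose fiber over $Q_2$ is given by the intersection of the polar locus of $\phi_W(\psi(Q_2), P_1)$ on $W$ with $C_{P_0}$, and one must show that the monodromy action on this cover is transitive --- equivalently, that $\pi_1$ does not factor through a nontrivial finite cover of $W$. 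Swinnerton-Dyer handles this through a careful analysis of the cubic surface geometry; once established, rationality of $Z$ combined with geometric integrality of the generic fiber of $\pi_1$ yields the Hilbert-unirationality of $W$ via Definition \ref{def:hu}.
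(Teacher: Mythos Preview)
Your rationality argument and the reduction of the generic fiber to a curve $F_{P_1} \subset \mathbb{P}^1_{Q_2} \times \mathbb{P}^1_{Q_3}$ are correct, and this reduction is exactly what the paper describes as Swinnerton-Dyer's approach: he too projects the general fiber onto $\mathbb{P}^1 \times \mathbb{P}^1$ and studies the resulting curve.

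The divergence, and the gap, is in the irreducibility step. You propose to specialize $P_1$ to some $P_1^*$, check geometric irreducibility there, and invoke semicontinuity. The semicontinuity direction is legitimate (geometric integrality of fibers is an open condition on the base for flat proper families, so one irreducible special fiber forces the generic fiber to be irreducible), but you never actually exhibit such a $P_1^*$ or carry out the verification. This is not a formality: finding a $P_1^*$ where the computation becomes tractable and genuinely yields irreducibility is the entire difficulty, and you concede as much in your final paragraph by deferring to Swinnerton-Dyer. As written, the proposal identifies the key obstacle without resolving it.

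Swinnerton-Dyer's actual argument, as summarized in the paper, does not proceed by specialization. Instead he works with $F_{P_1}$ for \emph{general} $P_1$: he determines its divisor class (bidegree) in $\mathbb{P}^1 \times \mathbb{P}^1$ and analyzes its singularities, and from these numerical and local data he rules out any decomposition into components of lower bidegree. This is a global argument on the curve rather than a degeneration argument. If you want to complete your approach along the specialization route, you would need to replace the last paragraph with an explicit choice of $P_1^*$ and a concrete irreducibility check (e.g.\ via an explicit equation for $F_{P_1^*}$); otherwise, you should follow the divisor-class/singularity route and compute the bidegree of $F_{P_1}$ directly.
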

We immediately obtain the following corollary.

\begin{corollary}
Smooth cubic surfaces over Hilbertian fields with a rational point satisfy the Hilbert property. 
\end{corollary}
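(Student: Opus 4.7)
The plan is to reduce to the hypothesis of Proposition \ref{Prop: SD} (existence of a rational point avoiding all $27$ lines) and then invoke Proposition \ref{prop:goodcover}. The only real work lies in producing a rational point that avoids the $27$ lines, starting from a single rational point on $W$.

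First I would observe that any Hilbertian field $k$ is infinite: indeed, finite fields are not Hilbertian (as is noted in the paper right after Definition \ref{def:hp}), so in particular $|k| \geq 37$. Hence Theorem \ref{thm:unir} applies and yields a dominant rational map $\pi \colon \mathbb{P}^n \dashrightarrow W$, i.e., $W$ is unirational over $k$.

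Next I would show that $W(k)$ is Zariski dense in $W$. Let $U \subset \mathbb{P}^n$ denote the domain of $\pi$. Since $k$ is infinite, $U(k)$ is Zariski dense in $U$, and since $\pi$ is dominant, $\pi(U(k)) \subset W(k)$ is Zariski dense in $W$. The union of the $27$ exceptional curves of $W_{\overline{k}}$ is a proper closed subset of $W$, hence its complement contains a $k$-rational point $P_0 \in W(k)$ not lying on any line of $W$. Applying Proposition \ref{Prop: SD} to $P_0$ yields that $W$ is Hilbert-unirational. Finally, Proposition \ref{prop:goodcover} asserts that a Hilbert-unirational variety over a Hilbertian field has the Hilbert property, which concludes the proof.

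There is no real obstacle here: the corollary is essentially a three-line deduction assembling Theorem \ref{thm:unir}, Proposition \ref{Prop: SD}, and Proposition \ref{prop:goodcover}. The only ingredient requiring any argument is the passage from \emph{some} rational point to a rational point off the $27$ lines, which is handled by the Zariski density of $W(k)$ furnished by unirationality over an infinite field.
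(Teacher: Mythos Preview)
Your proof is correct and matches the paper's intended argument: the paper simply states ``We immediately obtain the following corollary'' after Proposition~\ref{Prop: SD}, leaving the reader to supply precisely the steps you give (Hilbertian $\Rightarrow$ infinite $\Rightarrow$ Theorem~\ref{thm:unir} applies, yielding a rational point off the $27$ lines, then Proposition~\ref{Prop: SD} and Proposition~\ref{prop:goodcover}). The only cosmetic difference is that you obtain the point off the lines via Zariski density from unirationality, whereas the paper's surrounding discussion of Theorem~\ref{thm:unir} alludes to Manin's direct construction of such a point; both routes are valid and rest on the same ingredient.
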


Key to the proof of Proposition \ref{Prop: SD} is an understanding of certain curves on $W$ constructed using the maps $\phi_W$ and $\psi_{P}$. One such curve is the image of the map $\mathbb{P}^1 \rightarrow W$, $Q_2 \mapsto \phi_W\left(\psi\left(Q_2\right),P_1\right)$ for fixed $P_1 \in W$. Another is the locus of points $P' \in W$ with $P' \in C_P$ for given $P \in W$. Specifically, Swinnerton-Dyer considers the projection of a general fiber of $\pi_1$ onto $\mathbb{P}^1 \times \mathbb{P}^1$, and it is by determining the properties of such projections (including their divisor class and singularities) that he ultimately deduces the geometric integrality of this birational image, hence of general fibers.

As mentioned in the introduction, Swinnerton-Dyer then pushes forward his method to prove the following result on weak weak approximation.

\begin{theorem}[Swinnerton-Dyer] \cite[Thm.~1]{SD} \label{thm:SD}
Let $W \subset \mathbb{P}^3_K$ be a smooth cubic surface over a number field $K$. Assume that $W(K) \neq \emptyset$, and let $S \subset \Omega_K$ be the union of all real places and all finite places corresponding to prime ideals $\mathfrak{p}$ such that the scheme-theoretic closure of $W$ in $\P^3_{\mathcal{O}_K}$ has  a singular fiber over $\mathfrak{p}$. Then $W(K)$ is dense $\prod_{v \notin S} W\left(K_v\right)$.
\end{theorem}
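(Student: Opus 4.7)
The plan is to strengthen the Hilbert-unirationality construction of Proposition \ref{Prop: SD} to an arithmetic statement by showing that the cover $\pi\colon Z \to W$ produced therein is split-unirational with its bad place set contained in the prescribed $S$. Granted such control, weak weak approximation for $W$ outside $S$ follows by a standard lift-and-push argument: an adelic point $(P_v)_{v \notin S}$ of $W$ lifts to an adelic point on $Z$ via arithmetic surjectivity of $\pi$, this lift is approximated by a global $K$-point of $Z$ using weak approximation for the rational variety $Z$, and pushing forward gives a global point of $W$ approximating $(P_v)$.

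First I would build $\pi\colon Z \to W$ exactly as in Proposition \ref{Prop: SD}, after choosing a base point $P_0 \in W(K)$ off all $27$ lines. Such a $P_0$ need not exist at the outset, so one first applies the Segre--Manin point procedure $\phi_W$ of Definition \ref{def:dp3pts}, perhaps to a pair of Galois-conjugate points produced by the rational curve procedure of Definition \ref{def:dp3curves}, to manufacture one. With $P_0$ in hand, the variety $Z \subset W \times \mathbb{P}^1 \times \mathbb{P}^1 \times W$ is rational by Proposition \ref{Prop: SD}, and $\pi$ has geometrically integral generic fiber; a fiber-by-fiber analysis along the lines of the proof of \emph{loc.\ cit.}\ shows moreover that $\pi$ has split fibers over every codimension-one point, establishing split-unirationality.

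Next I would control the bad place set. Spread the entire construction---$W$, the rational point $P_0$, the parametrization $\psi_{P_0}$, and the cover $\pi$---to a model $\mathcal{Z} \to \mathcal{W}$ over $\mathcal{O}_{K,S_0}$, where $S_0$ consists of the archimedean places, the primes of bad reduction of $W$, and the finitely many additional primes at which $P_0$ reduces onto some line of $\mathcal{W}_v$ or the construction otherwise degenerates. Outside $S_0$, the reduction $\pi_v\colon \mathcal{Z}_v \to \mathcal{W}_v$ is a morphism of smooth projective varieties inheriting the split-fiber property, so by Lang--Weil--Nisnevich plus Hensel's lemma---applied exactly as in the proof of Lemma \ref{lem:arithmetically surjective}---$\pi$ is arithmetically surjective at all $v \notin S_0$. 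This already yields the theorem with $S$ enlarged to $S_0$.

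The principal obstacle will be to shrink $S_0$ to the prescribed $S$, i.e.\ to eliminate those primes at which $W$ has good reduction but the auxiliary point $P_0$ happens to reduce onto a line of $\mathcal{W}_v$. My intended approach is to replace the single cover by a family: instead of a cover built from a fixed $P_0$, work with a parameter space $T$ of admissible base points and the relative cover $\mathcal{Z}_T \to W \times T$ obtained by varying $P_0$. For each $v \notin S$ one then chooses a $K_v$-point of $T$ whose reduction avoids every line of $\mathcal{W}_v$---possible by Lang--Weil since $\mathcal{W}_v$ is a smooth cubic surface over $\F_v$---and approximates these local choices simultaneously using weak approximation on $T$, which is a Zariski-open subset of a rational variety. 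Real places are kept in $S$ from the outset: the connected components of $W(\mathbb{R})$ can decompose so that $\pi(Z(\mathbb{R}))$ is a proper subset of $W(\mathbb{R})$, and this cannot be circumvented by any single fixed cover.
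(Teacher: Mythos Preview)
The paper does not prove this theorem at all; it is quoted from \cite[Thm.~1]{SD} as background, so there is no ``paper's own proof'' to compare your proposal against. Indeed, the remark immediately following Theorem~\ref{thm:main1} explicitly contrasts Swinnerton-Dyer's explicit control of $S$ with the fact that the split-unirational method of this paper ``does not readily give an explicit description of the bad set''. So the very framework you are invoking is one the authors flag as \emph{insufficient} for the precise statement you are trying to prove.

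Your proposal also contains a genuine gap at the step you yourself identify as the principal obstacle. You write that the parameter space $T$ of admissible base points ``is a Zariski-open subset of a rational variety'', and then invoke weak approximation on $T$. But $T$ is an open subset of $W$, and a smooth cubic surface over a number field is in general \emph{not} $K$-rational (only geometrically rational). Appealing to weak approximation on $T \subset W$ is therefore circular: it presupposes exactly the kind of approximation statement for $W$ that you are trying to establish. Without this step, your argument only yields weak weak approximation away from some unspecified finite $S_0 \supset S$, which is weaker than the stated theorem. Swinnerton-Dyer's actual proof in \cite{SD} does not proceed via a single split-unirational cover plus spreading out; it uses a more hands-on analysis of $R$-equivalence and the point/curve procedures on the integral model directly over each residue field of good reduction, which is what allows him to pin $S$ down to exactly the primes of bad reduction together with the real places.
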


\subsection{Del Pezzo surfaces of degree $2$}
We now present analogues of the rational curve and point procedures for cubic surfaces in the setting of del Pezzo surfaces of degree $2$. The rational curve procedure, first given by Manin \cite{MAN} in certain cases and later generalized by Salgado, Testa and V\'arilly-Alvarado \cite{STVA}, is key to the associated proofs of unirationality. The point procedure we give is new, although it bears an interesting resemblance to an alternative formulation of the rational curve procedure observed by Festi and van Luijk \cite[Prop.~4.3]{FvL} in the extended preprint of \cite{FvLfull}.

Let $X$ be a del Pezzo surface of degree 2 over a field $k$. For convenience, we make the following definition.

\begin{mydef} \label{def:gen}
Let $P \in X$ be a point not lying on the ramification divisor $\mathcal{R}$ of $X$. We say that $P$ is \emph{general} if it is not a generalized Eckardt point, and we say that it is \emph{very general} if it does not lie on any exceptional curve of $X$.
\end{mydef}

The hypothesis of Theorem \ref{thm:main1} is that $X$ contains a general rational point. By the following result, this is equivalent to the existence of a very general rational point for $\ch k =0$. \textbf{Thus, we henceforth assume without loss of generality that $X$ contains a very general point $P_0 \in X(k)$.}

\begin{theorem} \cite[Cor.~3.3]{STVA} \label{thm:STVA}
Let $X$ be a del Pezzo surface of degree $2$ over a field $k$. If $X$ contains a general $k$-rational point, then $X$ is unirational.
\end{theorem}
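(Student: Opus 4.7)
The plan is to carry out the rational-curve procedure for degree-$2$ del Pezzo surfaces: use the hypothesis of a general rational point $P_0$ to produce a geometrically integral rational curve on $X$ through $P_0$, and then iterate at a moving point to obtain a dominant map from a rational surface to $X$.

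For the first step, I would exploit the anticanonical double cover $\kappa\colon X \to \mathbb{P}^2$ branched along the smooth plane quartic $\mathcal{B}$. Let $Q := \kappa(P_0) \in \mathbb{P}^2 \setminus \mathcal{B}$. For any line $\ell \subset \mathbb{P}^2$ through $Q$ that is simply tangent to $\mathcal{B}$ --- meeting $\mathcal{B}$ with multiplicity $2$ at one point $R$ and transversally at two further distinct points --- the pullback $\kappa^{-1}(\ell)$ is geometrically integral of arithmetic genus $1$ with a single node over $R$, hence geometrically rational. Since $P_0 \notin \mathcal{R}$, the curve $\kappa^{-1}(\ell)$ is smooth at $P_0$, so the $k$-rational point $P_0$ lifts to a $k$-point of its normalization, giving a $k$-rational parametrization $\mathbb{P}^1 \to \kappa^{-1}(\ell) \subset X$ through $P_0$. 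The $12$ such tangent lines from $Q$ (corresponding to the class of $\mathcal{B}$) are the intersection points of the dual line $L_Q^\vee \subset (\mathbb{P}^2)^\vee$ with the dual quartic $\mathcal{B}^\vee$, forming a Galois-invariant zero-cycle of degree $12$ over $k$.

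The iteration step proceeds by carrying out the same construction at the generic point $\eta$ of one such rational curve $C \subset X$. Regarding $\eta$ as a rational point of $X$ over $k(C)$ and applying the tangent-line procedure there produces a second geometrically rational curve $C_\eta$ through $\eta$; parametrizing it by $\mathbb{P}^1$ via $\eta$ and spreading out over $C \cong \mathbb{P}^1_k$ yields a dominant rational map from a surface birational to $\mathbb{P}^1 \times \mathbb{P}^1$ into $X$. Two main technical obstacles arise. First, Galois descent: the $12$ tangent lines from $Q$ are typically not individually defined over $k$, so one must work with the universal family of tangents parametrized by a $k$-irreducible component of $L_Q^\vee \cap \mathcal{B}^\vee$ and descend only after iteration, when the compositum of extensions becomes trivial. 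Second, one must verify that the genericity hypothesis on $P_0$ (not on $\mathcal{R}$, not a generalized Eckardt point) ensures the tangent intersection cycles have the expected shape $2R + R_1 + R_2$ (ruling out flex and bitangent degenerations), the resulting nodal curves are geometrically integral, and the iteration sweeps out a $2$-dimensional image rather than collapsing into a single curve.
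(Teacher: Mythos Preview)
The paper does not supply its own proof of this theorem; it is quoted from \cite{STVA}. However, the paper does describe (Definition~\ref{def:dp2curves}) the key ingredient of the argument in \cite{STVA}, and it differs essentially from yours. Given a general point $P$, the rational curve $C_P$ used in \cite{STVA} is the unique member of the linear system $|{-2K_X - \sum_{i=1}^n E_i - (3-n)P}|$ (equivalently, when $P$ is very general, the hyperplane section for the osculating hyperplane under the $|{-2K_X}|$-embedding into $\P^6$). Because this linear system is canonically attached to $P$, the curve $C_P$ is automatically defined over $k$, and iterating the construction at the generic point of $C_P$ produces a $k$-rational surface dominating $X$.

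Your construction instead uses pullbacks of anticanonical curves: the nodal members of $|{-K_X - P_0}|$, i.e.\ preimages of the twelve tangent lines from $\kappa(P_0)$ to $\mathcal{B}$. The Galois problem you flag is not a technicality but the heart of the matter, and your proposed fix does not work. The twelve tangent lines are parametrized by the zero-dimensional $k$-scheme $L_Q^\vee \cap \mathcal{B}^\vee$, which has no reason to have a $k$-point; working over a $k$-irreducible component $\Spec k'$ gives you a rational curve over $k'$, hence unirationality of $X_{k'}$, not of $X$. There is no mechanism by which ``the compositum becomes trivial after iteration'': iterating the tangent-line construction at the generic point of such a curve only enlarges the field of definition further. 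This is precisely the obstacle that the $|{-2K_X}|$ construction of \cite{STVA} was designed to circumvent; Manin's earlier partial results for $d=2$ required additional hypotheses effectively guaranteeing that one of the nodal anticanonical curves is already $k$-rational, and the passage to $|{-2K_X}|$ is what removes that assumption.

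A secondary point: even granting a $k$-rational first curve, the assertion that the second iteration ``sweeps out a $2$-dimensional image'' is not automatic and needs an argument (cf.\ the care taken in \cite[Thm.~3.2]{STVA}). But the Galois descent gap already prevents the approach as written from going through.
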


\subsubsection{Rational curve procedure}\label{subsubsec: making rational curves}
A key ingredient in the proof of Theorem \ref{thm:STVA} is the following construction, given a general point $P \in X\left(k\right)$, of a rational curve containing $P$.

\begin{mydef}\label{def:dp2curves}
Let $P \in X(k)$ be a general point, and let $\rho\colon  X' = \text{Bl}_P X \rightarrow X$ be the associated blowup, with exceptional divisor $E\subset X'$ above $P$. Denote by $E_1,\dots,E_n$ the $n \leq 3$ exceptional curves of $X$ on which $P$ lies, and denote by $E_i' \subset X'$ the strict transform of $E_i$. The curve $C_P \subset X$ is defined to be the image via $\rho$ of the unique curve in the linear system $|{-2K_{X'} - E - \sum_{i=1}^n E_i'}|$,
which is then the unique curve in the system $|{-2K_X - \sum_{i=1}^n E_i - \left(3-n\right)P}|$.
\end{mydef}

The existence, uniqueness, integrality and rationality of $C_P$ is proved in \cite[Thm.~3.1]{STVA}.

\begin{note}\label{Note: multiplicity}
Note that, in the above definition, we have $(-2 K_{X^{\prime}}-E-\sum_{i=1}^n E_i^{\prime})\cdot E= 3 -n$. Thus, when $P$ lies on $n\leq2$ exceptional curves, the curve $C_P$ contains $P$ with multiplicity $3-n$. However, when $P$ lies on exactly three exceptional curves, the curve $C_P$ is a $k$-rational exceptional curve on $X$ not containing $P$. This leads to the following interesting observation: when $P$ lies on exactly three exceptional curves, one may blow down $C_P$ and obtain a del Pezzo surface of degree $3$ with a general rational point.
\end{note}

\begin{note}
When $P$ is very general, one can describe $C_P$ in a similar way to Definition~\ref{def:dp3curves}. Consider the octic embedding $\kappa_2\colon  X\hookrightarrow \P^6$ defined by $|-2K_X|$, and let $H:=\{h=0\}$ be the unique hyperplane that passes through $P$ and osculates $X$ at it, i.e. such that the restriction of $h$ to $X$ is $0$ to the second order (in Taylor expansion) at the point $P$. Then $C_P$ is defined as the intersection $H \cap X$. Indeed, such a hyperplane corresponds by construction to the unique curve in the linear system $|{-2K_X -3 P}|$.
\end{note}

\begin{note}
If $P$ lies on $4$ exceptional curves $E_1,\dots,E_4$, then $E_1 + E_2 + E_3 + E_4 \sim -2K_X$ (this is part of \cite[Prop.~4.12]{FvLfull}), thus the system $|{-2K_X - \sum_{i=1}^4 E_i}|$ contains no non-zero effective divisors. 
\end{note}

\subsubsection{The points procedure}\label{subsubsec: propagating points}
Having established an analogue of the Segre--Manin procedure for making rational curves, we now come to a procedure for producing one point from two others. We will make use of the fact that anticanonical curves on a degree-$2$ del Pezzo surface are of arithmetic genus $1$, as can be seen from the adjunction formula \cite[Exercise~V.1.3]{HAR}. Recall that the linear system $|{-K_X}|$ induces a degree-$2$ morphism $\kappa\colon  X \rightarrow \mathbb{P}^2$. Given two points $P, Q \in X$ such that $\kappa\left(P\right) \neq \kappa\left(Q\right)$, consider the preimage $E_{P,Q}$ in $X$ of the line $L = L_{\kappa\left(P\right),\kappa\left(Q\right)}\subset\mathbb{P}^2$ joining $\kappa\left(P\right)$ and $\kappa\left(Q\right)$. This curve $E_{P,Q}$ lies in the anticanonical system $|{-K_X}|$, hence it is of arithmetic genus $1$. If $\ch k \neq 2$, it is singular if and only if $L$ is tangent to the (smooth) plane quartic branch curve $\mathcal{B}$ of $\kappa$ and, for any $\ch k$, geometrically reducible if and only if $L$ is one of the $28$ bitangents of $\mathcal{B}$ \cite[Lem.~4]{STVA}, in which case it is the union of two exceptional curves.

\begin{mydef}
Define $\phi\colon  X \times X \dashrightarrow X, \left(P,Q\right) \mapsto R$, where $R$ is the unique point of $E_{P,Q}$ such that $2\left(\iota\left(P\right)\right) - \left(Q\right) \sim \left(R\right) \in \Div E_{P,Q}$. 
\end{mydef}

\begin{remark}
When $\ch k \neq 2$, the map $\phi$ is clearly defined on all $(P,Q)\in X\times X$ with $\kappa\left(P\right) \neq \kappa\left(Q\right)$ (meaning $Q \not\in \{P,\iota\left(P\right)\}$), such that $L_{\kappa\left(P\right),\kappa\left(Q\right)}$ is not tangent to the branch curve $\mathcal{B} \subset \mathbb{P}^2$ (i.e.\ such that $E_{P,Q}$ is smooth), see \cite[Prop.~III.3.4]{SIL}. We will show in Proposition \ref{prop:domain}  that the domain of $\phi$ actually includes (in any characteristic) also the pairs of points $(P,Q), \kappa(P) \neq \kappa(Q)$ for which $E_{P,Q}$ is irreducible and both $P$ and $Q$ are smooth points on it. Moreover, as we will show in Remark~\ref{rem:inv},  assuming $\ch k \neq 2$,  fixing $P\in X$, the map $\phi(P,-)\colon X\dashrightarrow X$ extends to a regular involution on $X\setminus\{P,\iota(P),C_P\}$ (where $C_P$ is only excluded if it is well-defined, i.e. when $P$ is a general point).  
\end{remark}

\begin{remark}\label{rem:phidef}
We make two observations on the above definition and its connection to the other geometric procedures that we have seen.
\begin{enumerate}
\item One reason for using $\iota\left(P\right)$ instead of $P$ in the above definition, as might appear more natural, is that the given map is connected to the rational curve procedure. Festi and van Luijk show \cite[Prop.~4.3]{FvL} that, writing $D_Q$, $Q \in \mathbb{P}^1$ for the pencil of curves in $|{-K_X}|$ through both $P$ and $\iota\left(P\right)$, the curve $C_P$ is formed by computing on each $D_Q$ the unique point $R$ such that $2\left(\iota\left(P\right)\right) - \left(P\right) \sim \left(R\right) \in \Div D_Q$. In particular, both the rational curve and point procedures arise from constructing degree-$1$ divisors from pairs of points on anticanonical curves.
\item The map $\phi$ admits a geometric interpretation akin to the line map employed by Swinnerton-Dyer: for Zariski-general $P$ and $Q$, the point $\phi\left(P,Q\right)$ is the third base-point of $|{-2K_X - 2P - Q}|$, just as the point $\phi_W\left(P,Q\right)$ is the third base-point of $|{-K_W - P - Q}|$.  In fact, note first that the linear system $|-2K_X - 2P - Q|$ contains the linear subsystem of reducible divisors consisting of the sum of $E_{P,Q}$ and any curve from $|-K_X - P|$. The base locus of this linear subsystem is (set-theoretically) the curve $E_{P,Q}$, hence any base-point lies on $E_{P,Q}$. For an element $H \in \left|-2 K_X-2 P-Q\right|$ (a hyperplane in $\mathbb{P}^6$, into which $X$ embeds via the morphism coming from $|-2K_X|$), writing $H \cap E_{P,Q} = \{P,P,Q,R_H\}$ as four points with multiplicity on $E_{P,Q}$, we have $2\left(P\right) + \left(Q\right) + \left(R_H\right) \sim (-2K_X)|_{E_{P,Q}} \sim 2 (-K_X)|_{E_{P,Q}} \sim 2\left(P\right) + 2\left(\iota\left(P\right)\right)$ on $E_{P,Q}$. Then $\left(R_H\right) \sim 2\left(\iota\left(P\right)\right) - \left(Q\right)$ is constant in $H$, and is the unique third base point of $|{-2K_X - 2P - Q}|$.
\end{enumerate}
\end{remark}

\begin{note}
It would be of great interest to know whether a tactic similar to that of Segre and Manin could be used in order to prove unirationality in the case where one begins with a non-general rational point, i.e.\ whether one can use the $\phi$ map to deduce the existence of a general rational point from a non-general one.
\end{note}

\subsubsection{A subdomain of $\phi$}
In this subsubsection we find an open subset of $X \times X$ contained in the domain of $\phi$ (i.e.\ a ``subdomain'') which is large enough for our purposes. 

More precisely, we prove the following result.

\begin{proposition}\label{prop:domain}
    Define 
    \[
    U_{\phi}:=\left\{(P,Q) \in X \times X \;\left|\; \kappa(P) \neq \kappa(Q), \begin{gathered}
    L_{\kappa(P),\kappa(Q)} \text{ is not bitangent to }\mathcal{B}, \\
    \text{ and } P, Q \text{ are smooth points on }E_{P,Q}\end{gathered} \right.\right\}.
    \]
    Then $U_{\phi}$ is an open subset of $X \times X$ contained in the domain of $\phi$.
\end{proposition}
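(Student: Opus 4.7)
First, I verify openness of $U_\phi$ by treating each of its three defining conditions separately. The locus $\{(P,Q): \kappa(P)=\kappa(Q)\}$ is the union $\Delta_X \cup \Gamma_\iota$ of the diagonal and the graph of the Geiser involution, hence closed; set $U := (X\times X) \setminus (\Delta_X \cup \Gamma_\iota)$. The line $L_{\kappa(P),\kappa(Q)}$ varies algebraically with $(P,Q) \in U$, and since there are only $28$ bitangent lines to $\mathcal{B}$, the locus in $U$ where $L_{\kappa(P),\kappa(Q)}$ is one of these bitangents is the finite union, over bitangents $\ell$, of $(\kappa^{-1}(\ell) \times \kappa^{-1}(\ell)) \cap U$, and hence closed. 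For the smoothness conditions, I form the universal curve $\mathcal{E} := \{(P,Q,R) \in U \times X : R \in E_{P,Q}\}$ with projection $\pi\colon \mathcal{E} \to U$; its relative singular locus $\mathrm{Sing}(\pi)$ is closed in $\mathcal{E}$, and the loci where $P$ or $Q$ fails to be smooth on $E_{P,Q}$ are the preimages of $\mathrm{Sing}(\pi)$ under the closed sections $(P,Q) \mapsto (P,Q,P)$ and $(P,Q) \mapsto (P,Q,Q)$, hence closed in $U$.

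For the domain inclusion, fix $(P,Q) \in U_\phi$. By non-bitangency of $L_{\kappa(P),\kappa(Q)}$ and \cite[Lem.~4]{STVA}, $E_{P,Q}$ is geometrically integral of arithmetic genus $1$. I next verify that $\iota(P)$ is also a smooth point of $E_{P,Q}$: either $P \in \mathcal{R}$, in which case $\iota(P)=P$ trivially; or $P \notin \mathcal{R}$, in which case $\kappa$ is \'etale near both $P$ and $\iota(P)$, so both points are smooth on $E_{P,Q} = \kappa^{-1}(L_{\kappa(P),\kappa(Q)})$. Thus $E_{P,Q}$ is an integral proper curve of arithmetic genus $1$ with three distinguished smooth points $P, \iota(P), Q$. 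On any such curve, the Abel--Jacobi map $E^{\mathrm{sm}} \xrightarrow{\sim} \Pic^1(E)$, $R \mapsto [R]$, is an isomorphism of $k$-varieties (classical for smooth $E$; for $E$ nodal or cuspidal rational both sides identify elementarily with $\mathbb{G}_m$ or $\mathbb{G}_a$ respectively). Consequently the degree-$1$ class $[2(\iota(P))-(Q)]$ admits a unique representative $R \in E_{P,Q}^{\mathrm{sm}}$, so $\phi(P,Q)=R$ is set-theoretically well-defined on all of $U_\phi$.

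To promote this pointwise construction to a morphism, I would carry it out in families via the relative Picard scheme $\Pic^1(\mathcal{E}|_{U_\phi}/U_\phi)$: the three sections $\sigma_P, \sigma_{\iota(P)}, \sigma_Q\colon U_\phi \to \mathcal{E}|_{U_\phi}$ land in the relative smooth locus, the divisor class $[2\sigma_{\iota(P)} - \sigma_Q]$ defines a section of $\Pic^1(\mathcal{E}|_{U_\phi}/U_\phi)$, and under the relative Abel--Jacobi isomorphism this pulls back to a section $U_\phi \to \mathcal{E}|_{U_\phi}^{\mathrm{sm}}$, which upon composing with the projection to the third factor yields $\phi\colon U_\phi \to X$. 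The main technical hurdle is ensuring representability of the relative Picard scheme and the relative Abel--Jacobi isomorphism over a family of integral curves whose fibers may be nodal or cuspidal. A more concrete alternative avoiding this machinery would use the third-base-point description from Remark~\ref{rem:phidef}(2): realize $R$ as the residual intersection on $E_{P,Q}$ of a general element of the net $|{-2K_X - 2P - Q}|$ beyond $2P + Q$, after a dimension count showing that this net has the expected dimension $2$ throughout $U_\phi$.
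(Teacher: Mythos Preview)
Your approach is the same as the paper's in outline: form the incidence variety over $U_\phi$, observe it is a relative arithmetic genus~$1$ curve, take the two obvious smooth sections $\sigma_{\iota(P)}$ and $\sigma_Q$, and combine them via the fiberwise group law to produce a section whose projection to $X$ extends $\phi$.

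The one place where you diverge is exactly the step you flag as incomplete. You reach for the relative Picard scheme and the relative Abel--Jacobi isomorphism, and correctly worry about representability over a base where the fibers can be nodal or cuspidal. The paper sidesteps this by citing a result of Deligne--Rapoport (\cite[Prop.~2.7]{DR}, stated in the paper as Proposition~\ref{prop:DR}): for any relative arithmetic genus~$1$ curve $f\colon\mathcal{C}\to S$ with a section into the smooth locus, there is a unique morphism $+\colon\mathcal{C}^0\times_S\mathcal{C}\to\mathcal{C}$ realizing the expected line-bundle identity and making $\mathcal{C}^0$ a commutative $S$-group scheme. This is precisely the package (representability plus Abel--Jacobi) that you need, delivered in one citation; the paper then just writes $\sigma_{2\iota(P)-Q}:=[2]\sigma_{\iota(P)}-\sigma_Q$ and is done. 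Your residual-intersection alternative via $|{-2K_X-2P-Q}|$ would also work but is heavier. Two minor remarks: the paper does not spell out the openness argument you give in your first paragraph (it is taken as clear), and your check that $\iota(P)$ is smooth on $E_{P,Q}$ is slightly roundabout---since $\iota$ restricts to an automorphism of $E_{P,Q}=\kappa^{-1}(L)$, it sends smooth points to smooth points directly.
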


Recall that the condition that $L_{\kappa(P),\kappa(Q)}$ is not bitangent to $\mathcal{B}$ means precisely that $E_{P,Q}$ is not the union of two exceptional curves. Moreover, when $\ch k \neq 2$, the second condition is equivalent to asking that, if $L_{\kappa(P),\kappa(Q)}$ is tangent to $\mathcal{B}$, then $P,Q \notin \mathcal{R}$. It will follow from the proof of Proposition \ref{prop:domain} that, set-wise, the morphism $\phi$ sends a point $(P,Q)$ in $U_{\phi}$ to the unique smooth point $R$ of $E_{P,Q}$ such that $(R) \sim 2(\iota(P))-(Q)$ on~$E_{P,Q}$ (this is only relevant when $E_{P,Q}$ is singular, for otherwise it is just the definition of $\phi$). 

Before coming to the proof, let us recall the definition of a  \emph{relative arithmetic genus $1$ curve}. We give this in the general setting of schemes, but we will only need it in the case where $S$ is a $k$-variety (see Remark \ref{Rmk: S variety}).

\begin{mydef}\label{def:relcurve}
    Let $\mathcal{C}$ and $S$ be schemes with $S$ locally Noetherian. A proper morphism $f\colon \mathcal{C} \to S$ is a \emph{relative arithmetic genus $1$ curve} if: 
    \begin{itemize}
        \item $f$ is flat;
        \item the fibers are geometrically integral curves;
        \item the fibers have arithmetic genus $1$.
    \end{itemize}
\end{mydef}

For such a morphism $f$, we denote by $\mathcal{C}^0\subset \mathcal{C}$ the (open) smooth locus of $f$.

On such a relative curve we may do operations among sections as in the following proposition, found in \cite[Prop.~2.7]{DR} and  formulated in the language of schemes. 

\begin{proposition}[Deligne--Rapoport]\label{prop:DR}
    Let $f\colon \mathcal{C} \to S$ be a {relative arithmetic genus $1$ curve}. Assume that there exists a section $e\colon S \to \mathcal{C}^0$. There exists then a unique operation $+\colon \mathcal{C}^0 \times_S \mathcal{C} \to \mathcal{C}$ such that, on $S$ and all base changes, for $x \in \mathcal{C}^0(S)$ and $y \in \mathcal{C}(S)$, we have:
    \[
    \mathcal{O}(x+y) \cong \mathcal{O}(x) \otimes \mathcal{O}(y) \otimes \mathcal{O}(e)^{\otimes-1}
    \]
    locally on $S$. (Here $\mathcal{O}(x)$ denotes the line bundle corresponding to the effective Cartier divisor associated to the image of $x$ in $\mathcal{C}$.) This operation makes $\mathcal{C}^0$ into a commutative group scheme, with unity $e$, acting on $\mathcal{C}$.
    When $S=\Spec k$, the operation $+$ is the usual (possibly singular) elliptic curve operation on $\mathcal{C}^0$.
\end{proposition}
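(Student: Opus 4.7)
The strategy is to construct the operation via the relative Picard scheme, generalizing the classical Abel--Jacobi description of the group law on an elliptic curve. The key fiberwise input is: on each geometric fiber $C_s$, a Gorenstein integral proper curve of arithmetic genus $1$, the dualizing sheaf has degree $0$, so any degree-$1$ line bundle $\mathcal{L}$ satisfies $h^1(C_s,\mathcal{L})=0$ and $h^0(C_s,\mathcal{L})=1$ by Serre duality and Riemann--Roch. In particular, such an $\mathcal{L}$ has a unique effective divisor representative (a single closed point), and this pins down the point $x+y$ fiberwise: it is the effective divisor representing $\mathcal{O}(x)\otimes \mathcal{O}(y)\otimes \mathcal{O}(e(s))^{\otimes -1}$.

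To globalize, one first verifies that any section $y \in \mathcal{C}(S)$ defines a relative effective Cartier divisor on $\mathcal{C}$; this follows from flatness of $f$ combined with the Gorenstein property of fibers (sections $x \in \mathcal{C}^0(S)$ land in the smooth locus and cause no difficulty). Then for $(x,y)\in \mathcal{C}^0(S)\times_S \mathcal{C}(S)$ the sheaf $\mathcal{L}_{x,y} := \mathcal{O}(x)\otimes \mathcal{O}(y)\otimes \mathcal{O}(e)^{\otimes -1}$ is a line bundle on $\mathcal{C}$ of relative degree $1$. Fiberwise vanishing of $h^1$, combined with cohomology and base change, implies that $f_*\mathcal{L}_{x,y}$ is an invertible sheaf on $S$ whose formation commutes with arbitrary base change. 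The zero scheme of the canonical evaluation map $f^*f_*\mathcal{L}_{x,y}\to \mathcal{L}_{x,y}$ is thus a relative effective Cartier divisor of $\mathcal{C}$ finite of degree $1$ over $S$, equivalently a section $x+y\in \mathcal{C}(S)$. The construction is manifestly natural in $S$, satisfies the stated line bundle isomorphism by design, and is uniquely characterized by it since the effective representative is unique.

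To equip $\mathcal{C}^0$ with its commutative group scheme structure with identity $e$, transport along the Abel map $\mathcal{C}^0\to \Pic^0_{\mathcal{C}/S}$, $x\mapsto [\mathcal{O}(x-e)]$, which is an isomorphism onto the identity component of the relative Picard scheme by the fiberwise statement above. Commutativity, associativity, existence of inverses, and compatibility with $+$ restricted to $\mathcal{C}^0 \times_S \mathcal{C}^0$ all descend from the corresponding properties in the Picard functor (together with the action of $\Pic^0_{\mathcal{C}/S}$ on $\Pic^1_{\mathcal{C}/S}$). The $S=\Spec k$ case reproduces the classical chord-based group law on a (possibly singular) geometrically integral genus-$1$ curve. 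The most delicate step in the whole argument is the cohomology-and-base-change reduction, which hinges on the uniform vanishing of $h^1$ along fibers; the hypotheses of Gorensteinness and arithmetic genus $1$ make this automatic via Serre duality. This construction is carried out in the broader context of generalized elliptic curves by Deligne--Rapoport in \cite{DR}.
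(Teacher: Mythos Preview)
The paper does not supply its own proof of this proposition: it is quoted verbatim as \cite[Prop.~2.7]{DR} and used as a black box. Your sketch is a correct outline of the standard argument and is essentially the route Deligne--Rapoport take, namely producing the section $x+y$ by pushing forward the universal degree-$1$ line bundle (using Riemann--Roch plus Serre duality on each fiber to force $h^0=1$, $h^1=0$, and then cohomology and base change to globalize), and identifying $\mathcal{C}^0$ with the identity component of $\Pic_{\mathcal{C}/S}$ via the Abel map to obtain the group structure. There is nothing to compare against in the paper itself.
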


\begin{remark}\label{Rmk: S variety}
    In particular, this proposition tells us that, if we have a relative arithmetic genus $1$ curve $f\colon \mathcal{C} \to S$ over a $k$-variety $S$ with a zero-section $e\colon S \to \mathcal{C}$, then there is a morphism $+\colon \mathcal{C}^0 \times_S \mathcal{C}^0 \to \mathcal{C}$ that restricts to the standard group operation of (possibly singular) elliptic curves on the fibers of $f$.
\end{remark}

\begin{proof}[Proof of Proposition \ref{prop:domain}]
Let $\mathcal{W}$ be the following incidence variety:
\[
\mathcal{W}:=\{(R,P,Q) \in X \times U_{\phi} \mid R \in E_{P,Q}\}.
\]
The projection $f:=pr_2\colon  \mathcal{W} \to U_{\phi}$ may be identified with the base change along $U_{\phi} \to \operatorname{Gr}(1,2)$ (sending $(P,Q)$ to $L_{\kappa(P),\kappa(Q)}$) of the cover $u_2:\mathcal{G} \times_{\P^2,\kappa} X \xrightarrow{u \circ pr_1} \operatorname{Gr}(1,2)$, where $u:\mathcal{G}\to \operatorname{Gr}(1,2)$ is the ({flat}) universal Grassmanian family of lines in $\mathbb{P}^2$, consisting of pairs $(L,P)$ where $L$ is an element in $\operatorname{Gr}(1,2)$ and $P\in\mathbb{P}^2$ a point in $L$. Since $\kappa$ is flat \cite[Rem.~4.3.11]{LIU}, both $u_2$ and $f$ are flat by preservation of flatness under base change and composition. Since the fibers of $f$ satisfy the assumptions of Definition \ref{def:relcurve}, this projection makes $\mathcal{W}$ into a relative curve of arithmetic genus~$1$ over $U_{\phi}$.

This relative curve has various sections, among which we have $$\sigma_{\iota(P)}:=(\iota \circ pr_1,id)\colon U_{\phi} \to \mathcal{W} \subset X \times U_{\phi}\text{ and }\sigma_Q:=(pr_2,id)\colon U_{\phi} \to\mathcal{W} \subset  X \times U_{\phi},$$ i.e. ``$(P,Q) \mapsto (\iota(P),P,Q)$'' and ``$(P,Q) \mapsto (Q,P,Q)$'', respectively. Note that these two sections land in the smooth locus  $\mathcal{W}^0$ of the fibration $f$. We may then define the section
\[
\sigma_{2\iota(P)-Q}\colon U_{\phi} \to \mathcal{W}^0, \ \ (P,Q) \mapsto [2]\sigma_{\iota(P)} - \sigma_Q
\]
of $f$ via the sum operation defined in Proposition \ref{prop:DR} (note that to make sense of a linear combination of sections $\sum_{i=1}^r [n_i] \sigma_i$ with $\sum_{i=1}^r n_i=1$, there is no need to specify a zero-section $e$ for the genus-$1$ curve, as the result of the sum is independent of the choice of $e$). Then the composition 
\[
U_{\phi} \xrightarrow{\sigma_{2\iota(P)-Q}} \mathcal{W} \xrightarrow{pr_1} X
\]
extends the rational map $\phi$, as wished.
\end{proof}

\subsubsection{Associated elliptic surface}\label{subsubsec: elliptic surface}
In the remainder of Section \ref{sec:twogeomproc}, we assume char $k\neq2$. Let $P\in X(k)$ be a rational point on $X$. Below we describe how to obtain an associated elliptic surface $\mathcal{E}_P$, and how the rational curves and  point procedures can be viewed on this elliptic surface, following \cite[\S4]{FvL}.

Recall that $\mathcal{R}$ denotes the ramification divisor of $X$.

\begin{mydef}
Given a rational point $P$ on $X$, we define a surface $\mathcal{E}_P$ and curves on it as follows.
\begin{itemize}
\item If $P$ is not contained in $\mathcal{R}$, we set 
$$\mathcal{E}_P := \text{Bl}_{P,\iota\left(P\right)}X
,$$
and we write $E_P$ and $E'_P=E_{\iota(P)}$ for the exceptional divisors in $\mathcal{E}_P$ above $P$ and $\iota(P)$ respectively. 
\item If $P$ is contained in $\mathcal{R}$, set $X' := \text{Bl}_{P}X$. The strict transform in $X'$ of the pencil $|{-K_X-P}|$ on $X$ has a unique base point $P'$ (lying on the exceptional curve above $P$), of multiplicity one. We define 
$$
\mathcal{E}_P := \text{Bl}_{P'}X',
$$
and we write $E_P$ and $E'_P$ for the strict transform in $\mathcal{E}_P$ of the exceptional divisor in $X'$ above $P$ and for the exceptional divisor in $\mathcal{E}_P\to X'$ above $P'$, respectively.
\end{itemize}
We denote by $b\colon \mathcal{E}_P \to X$ the double blow-up. 
\end{mydef}

In both cases, $b$ factors through the blow-up $X'=\text{Bl}_PX\to X$. 
Let $\mathfrak{L}_{\kappa\left(P\right)}\cong\mathbb{P}^1$ be the pencil of lines in $\mathbb{P}^2$ passing through $\kappa\left(P\right)$. The morphism $\kappa$ induces a morphism $\mu_1\colon \mathcal{E}_P\to\text{Bl}_{\kappa(P)}\mathbb{P}^2$. We identify Bl$_{\kappa(P)}\mathbb{P}^2$ with the variety $\{(Q,L)\in\mathbb{P}^2\times\mathfrak{L}_{\kappa(P)}\colon Q\in L\}$, and denote by $\mu_2$ the projection of Bl$_{\kappa(P)}\mathbb{P}^2$ onto the second factor. Let $\nu\colon \mathcal{E}_P\to\mathfrak{L}_{\kappa\left(P\right)}$ be the composition $\mu_2\circ\mu_1$. The maps are represented in the following diagram.

\begin{center}
\begin{tikzcd}
\mathcal{E}_P \arrow[d,"b"] \arrow[rr,bend left=25,"\nu"] \arrow[r, "\mu_1"] & \text{Bl}_{\kappa(P)}\mathbb{P}^2\arrow[d,"\text{blow-up}"]\subset \mathbb{P}^2\times\mathfrak{L}_{\kappa(P)}\arrow[r,"\mu_2"]&\mathfrak{L}_{\kappa(P)} \cong\mathbb{P}^1\\
X\arrow[r,"\kappa"]&\mathbb{P}^2&
\end{tikzcd}
\end{center}

The fiber of $\nu$ above a line $L\in\mathfrak{L}_{\kappa(P)}$ equals the strict transform under the map $b$ of the curve $\kappa^{-1}(L)$ in $|{-K_X}|$, which is an element of $|{-K_{\mathcal{E}_P}}|$. We have
$$
K_{\mathcal{E}_P}^2=
\begin{cases}
\left(b^*(K_X)+E_P+E_{\iota(P)}\right)^2=2-2=0
\text{ if $P \notin \mathcal{R}$,} \\
\left(b^*(K_X)+E_P+2E'_{P}\right)^2=2-2-4+4=0
\text{ if $P \in \mathcal{R}$.}
\end{cases}
$$
By the adjunction formula, the genus of a fiber of $\nu$ above a line $L\in\mathfrak{L}_{\kappa(P)}$ is equal to
$$
    g(\nu^{-1}(L)) = \frac{1}{2}\nu^{-1}(L) \cdot (\nu^{-1}(L) + K_{\mathcal{E}_P} ) +1 = \frac12\left(-K_{\mathcal{E}_P}\cdot 0\right)+1=1,
$$
hence $\nu$ induces a genus-$1$ fibration on $\mathcal{E}_P$. Moreover, if $P\notin\mathcal{R}$, then each fiber intersects both $E_P$ and $E_{\iota(P)}$ with multiplicity 1 (since exceptional curves intersect anticanonical divisors with multiplicity $1$), hence these define two sections of $\nu$; indeed, both $E_P$ and $E_{\iota(P)}$ are mapped isomorphically by $\mu_1$ to the exceptional curve above $\kappa(P)$ in Bl$_{\kappa(P)}\mathbb{P}^2$, which is isomorphic to $\mathfrak{L}_{\kappa(P)}$. We conclude that $\mathcal{E}_P$ is an elliptic surface. Still following \cite[\S4]{FvL}, in this case we choose $E_{\iota(P)}$ to be the zero-section of the Mordell--Weil group of $\mathcal{E}_P$ (which is the Mordell--Weil group of the generic fiber of $\nu$). If $P \in \mathcal{R}$, then, similarly, $E'_P$ defines a section, and we choose it to be the zero-section of the Mordell--Weil group of $\mathcal{E}_P$. The surface $\mathcal{E}_P$ is proper and regular, and it is a \textsl{minimal elliptic surface}, i.e.\ no fiber contains a $\left(-1\right)$-curve: its singular fibers may be of types I$_1$, I$_2$, I$_3$, II, III, IV in Kodaira's classification, as shown in \cite[Tables 1 and 2]{KUW}. 

\begin{remark}\label{rem: elliptic fibers anticanonical curves}
The pencil of fibers of $\nu$ is by construction the strict transform of the pencil of anticanonical curves $E_{P,Q}$ passing through $P$ and $\iota(P)$. In particular, the elliptic operations on a smooth fiber $(\mathcal{E}_P)_t:=\nu^{-1}(t), t \in \P^1$ of the surface $\mathcal{E}_P$ may be identified via $b$ with the elliptic operations on the anticanonical curve $b((\mathcal{E}_P)_t)$.
\end{remark}

\begin{mydef}
For an integer $n \in \Z$, we define the rational map $[n]\colon \mathcal{E}_P \dashrightarrow \mathcal{E}_P$ as the map defined by multiplication by $n$ on the smooth fibers.
\end{mydef} 

For $n=-1$, the rational map $[-1]\colon \mathcal{E}_P \to \mathcal{E}_P$ is actually a morphism by the minimality of $\mathcal{E}_P \to \P^1$. More precisely, this rational map comes from an automorphism (not just an endomorphism) of the smooth fibers, and, as $\mathcal{E}_P$ is a \emph{minimal} proper regular elliptic surface,
it extends to an automorphism of $\mathcal{E}_P$ (see e.g. \cite[Prop. 9.3.13]{LIU})
The first part of the following result shows how one could alternatively define the curve $C_P$ (see Section \ref{subsubsec: making rational curves}) through the surface~$\mathcal{E}_P$. 

\begin{proposition}\label{prop:cpep} Let $P \in X(k)$ be a point. The following hold.
\begin{itemize}
    \item[]{\em (i)} If $P$ is general, the image of the section $[-1]E_{P}\cong\mathfrak{L}_{\kappa(P)}$ under $b$ is mapped to the curve $C_P$.
    \item[]{\em (ii)} If $P$ is not general, the curve $[-1]E_P$ is mapped under $b$ to $P$.
\end{itemize}
\end{proposition}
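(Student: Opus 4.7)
The plan is to analyze both parts fiber-wise on the relatively minimal genus-$1$ fibration $\nu\colon\mathcal{E}_P\to\mathfrak{L}_{\kappa(P)}\cong\mathbb{P}^1$. By minimality of $\nu$, the rational map $[-1]$ extends to an automorphism of $\mathcal{E}_P$ preserving the fibration, sending sections to sections and components of any fiber to components of the same fiber. Assume first $P\notin\mathcal{R}$, so $E_P$ and $E_{\iota(P)}$ are two distinct sections (the latter being the zero section) and $[-1]E_P$ is again a section. On the smooth fiber over a generic $L\in\mathfrak{L}_{\kappa(P)}$---identified via $b$ with the anticanonical curve $E_{P,L}\subset X$, and with the group law on $E_{P,L}$ (origin $\iota(P)$) matching that of the fiber (origin $E_{\iota(P)}$) by Remark~\ref{rem: elliptic fibers anticanonical curves}---the section $[-1]E_P$ cuts out the unique point $R_L$ characterized by $(R_L)\sim 2(\iota(P))-(P)$.

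For part (i), the Festi--van~Luijk characterization recalled in Remark~\ref{rem:phidef}(1) identifies $C_P$ as the closure in $X$ of $\{R_L:L\in\mathfrak{L}_{\kappa(P)}\}$. Since $b([-1]E_P)$ is precisely this locus by the previous paragraph, it suffices to rule out that $b$ contracts $[-1]E_P$ to a single point. The exceptional locus of $b$ is $E_P\sqcup E_{\iota(P)}$, so a contraction of the irreducible section $[-1]E_P$ would force $[-1]E_P\in\{E_P,E_{\iota(P)}\}$. The option $[-1]E_P=E_{\iota(P)}$ forces $\iota(P)=P$, contradicting $P\notin\mathcal{R}$; the option $[-1]E_P=E_P$ forces $R_L=P$ for all generic $L$, yielding $C_P=\{P\}$---contradicting the fact that $C_P$ is an integral curve by Definition~\ref{def:dp2curves}. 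Hence $b([-1]E_P)=C_P$.

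For part (ii) with $P\notin\mathcal{R}$---so that $P$ is a generalized Eckardt point, with exceptional curves $E_1,\ldots,E_4\subset X$ through $P$ satisfying $E_1+\cdots+E_4\sim-2K_X$---restrict to a generic smooth fiber $E_{P,L}$. Since $E_i\cdot E_{P,L}=1$ and $P\in E_i\cap E_{P,L}$, we have $E_i\vert_{E_{P,L}}=(P)$, hence $(E_1+\cdots+E_4)\vert_{E_{P,L}}=4(P)$. On the other hand $\kappa\vert_{E_{P,L}}$ is a degree-$2$ map with $\kappa^{-1}(\kappa(P))=\{P,\iota(P)\}$ for generic $L$, giving $(-K_X)\vert_{E_{P,L}}\sim(P)+(\iota(P))$ and hence $(-2K_X)\vert_{E_{P,L}}\sim 2(P)+2(\iota(P))$. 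Comparing yields $2(P)\sim 2(\iota(P))$, so $P$ is $2$-torsion in the Mordell--Weil group with origin $\iota(P)$, whence $[-1]E_P$ and $E_P$ agree on a dense open set of fibers and therefore as sections, giving $b([-1]E_P)=b(E_P)=P$. When $P\in\mathcal{R}$, $E_P$ sits instead as a $(-2)$-component of the reducible fiber $\nu^{-1}(L_0)$ over the tangent line $L_0$ to $\mathcal{B}$ at $\kappa(P)$ and is contracted by $b$; since $[-1]$ preserves $\nu$, $[-1]E_P$ lies in the same fiber, and a component-level analysis using the possible fiber types recalled in \S\ref{subsubsec: elliptic surface} then yields $[-1]E_P=E_P$, giving $b([-1]E_P)=P$ again.

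The chief technical point is excluding $[-1]E_P=E_P$ in case (i), which is handled via the Festi--van~Luijk characterization together with $C_P$ being a positive-dimensional curve. The subcase $P\in\mathcal{R}$ of (ii) is the most delicate, requiring an explicit identification of the components of the singular fiber $\nu^{-1}(L_0)$ and of the action of $[-1]$ on them, via the Kodaira types $\mathrm{I}_2,\mathrm{I}_3,\mathrm{II},\mathrm{III},\mathrm{IV}$ enumerated in \S\ref{subsubsec: elliptic surface}.
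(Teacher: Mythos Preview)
Your approach to (i) is genuinely different from the paper's. Whereas the paper computes the divisor class of the strict transform $\widetilde{C}_P$ on $\mathcal{E}_P$ and identifies it with $[-1]E_P$ modulo vertical divisors (hence equal as sections), you proceed fiber-wise: you invoke the Festi--van~Luijk characterization of $C_P$ as the locus of points $R_L$ with $(R_L)\sim 2(\iota(P))-(P)$ on the anticanonical pencil, observe that this is $b([-1]E_P)$ on each smooth fiber, and rule out contraction directly. Both routes are valid; yours is perhaps more transparent, while the paper's divisor-class computation has the advantage of not depending on the FvL characterization, which in the literature is stated for \emph{very} general $P$. The extension to merely general $P$ is easy (restrict $C_P+\sum_{i}E_i\sim -2K_X$ to a generic anticanonical curve through $P$ to recover $(R_L)\sim 2(\iota(P))-(P)$), but you do not justify this step, and the paper's own proof conspicuously treats the very general case by citation and the general case by a separate argument.

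Your treatment of the generalized Eckardt case in (ii) is a clean self-contained computation that $E_P$ is $2$-torsion; the paper simply cites \cite[Thm.~4.13]{FvL} here, so your argument is an improvement in that it unpacks the black box.

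The genuine gap is in the $P\in\mathcal{R}$ subcase of (ii). You assert that ``a component-level analysis using the possible fiber types\ldots\ yields $[-1]E_P=E_P$'' but do not carry it out, and your final paragraph concedes as much. The missing observation is short and bypasses any case analysis: by construction ($P'$ lies on the exceptional curve of $\mathrm{Bl}_P X$ above $P$, so blowing up $P'$ makes $E'_P$ meet the strict transform $E_P$), the $(-2)$-curve $E_P$ meets the zero section $E'_P$ and is therefore the identity component of its fiber. Since $[-1]$ fixes the identity component in every Kodaira fiber type, $[-1]E_P=E_P$ immediately. This is exactly the paper's argument, and it is what your sketch is missing.
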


\begin{proof}
(i) For $P$ very general this follows from \cite[Prop.~4.1]{FvL}. More generally, assume that $P$ is general, lying on $n$ exceptional curves $E_1,\ldots,E_n$ (note that $0 \leq n \leq 3$).
By construction, we have $C_{P}\sim -2K_X-\sum_{i=1}^nE_i$. Since $P$ is general, the exceptional curves $E_i$ do not contain $\iota(P)$, since the curves $E_i$ and $\iota(E_i)$ intersect each other in exactly two points which are both contained in $\mathcal{R}$. It follows that we have 
\begin{align*}
b^{*}(C_{P})&\sim-2K_{\mathcal{E}_P}+2E_P+2E_{\iota(P)}-\sum_{i=1}^nb^*(E_i)\\
&=-2K_{\mathcal{E}_P}+2E_P+2E_{\iota(P)}-\sum_{i=1}^n(\widetilde{E}_i+E_P)\\
&=-2K_{\mathcal{E}_P}+(2-n)E_P+2E_{\iota(P)}-\sum_{i=1}^n\widetilde{E}_i,
\end{align*}
where $\widetilde{E}_i$ is the strict transform of $E_i$ on $\mathcal{E}_P$. The multiplicity of $C_P$ at $P$ is $3-n$ by Note~\ref{Note: multiplicity}, and from \cite[Prop. 4.14]{FvL} it follows that $\iota(P)$ is not contained in $C_P$.
Therefore, the strict transform $\widetilde{C}_P$ of $C_P$ in $\mathcal{E}_P$ has divisor class
\[
 \widetilde{C}_P=b^*(C_P)-(3-n)E_P=-2K_{\mathcal{E}_P}-E_P+2E_{\iota(P)}-\sum_{i=1}^n\widetilde{E}_i.
\]
Since the $\widetilde{E}_i$'s are vertical divisors on the elliptic surface $\mathcal{E}_P$ (see \cite[Tables 1,2]{KUW}) and so is $-K_{\mathcal{E}_P}$, we have $\widetilde{C}_P=[-1]E_P$, as claimed. 

(ii) If $P$ is not general, then it is contained in $\mathcal{R}$ or it is a generalized Eckardt point. In the first case, we have that $E_P$ is a vertical divisor on the elliptic surface $\mathcal{E}_P$ (see \cite[Table~2]{KUW}) that intersects the zero-section $E'_P$. Thus we find $[-1]E_P=E_P$ and $b([-1]E_P)=b(E_P)=P$.
The second case follows from \cite[Thm.~4.13]{FvL}.
\end{proof}

We will now relate the elliptic surface $\mathcal{E}_P$ to the map $\phi$ constructed in Section \ref{subsubsec: propagating points}.

Note that the involution $\iota\colon X \to X$ extends to an involution $\mathcal{E}_P \to \mathcal{E}_P$ that commutes with $\nu$, which we still denote by $\iota$. 

\begin{proposition}\label{prop:comm}
    Let $P \in X(k)$ be any rational point. Let $T_P$ be the section $E_P$ of $\mathcal{E}_P$ when $P \notin \mathcal{R}$, and $T_P$ is equal to the zero-section $E'_P$ when $P \in \mathcal{R}$. The following diagrams commute:
    \[
    \begin{tikzcd}
        \mathcal{E}_P \arrow[d, "b"] \arrow[r, "{[2]\circ \iota - T_P}", dashed] & \mathcal{E}_P \arrow[d, "b"] \\
        X \arrow[r, "{\phi(-,P)}", dashed]                      & X,                           
    \end{tikzcd} \quad 
    \begin{tikzcd}
        \mathcal{E}_P \arrow[d, "b"] \arrow[r, "{[-1]}"] & \mathcal{E}_P \arrow[d, "b"] \\
        X \arrow[r, "{\phi(P,-)}", dashed]                      & X.                           
    \end{tikzcd}
    \]
\end{proposition}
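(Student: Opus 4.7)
The plan is to verify both diagrams as equalities of rational maps by restricting to a Zariski-dense open subset of $\mathcal{E}_P$ and translating each side to a computation on a generic smooth fiber of $\nu$. Concretely, fix $Q \in \mathcal{E}_P$ in the smooth locus of $\nu$ with $(b(Q), P) \in U_{\phi}$, and write $F_t = \nu^{-1}(t)$ for the fiber containing $Q$. By Remark \ref{rem: elliptic fibers anticanonical curves}, $b$ restricts to an isomorphism $F_t \xrightarrow{\sim} E_{P, b(Q)}$ compatible with the group laws (the one on $F_t$ coming from Proposition \ref{prop:DR} with zero-section $E_{\iota(P)}$ when $P \notin \mathcal{R}$, or $E'_P$ when $P \in \mathcal{R}$, and the one on $E_{P, b(Q)}$ being the standard group law with origin $\iota(P)$). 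By the construction of $\mathcal{E}_P$, the section $T_P$ meets $F_t$ at the $b$-preimage of $P$.

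Next, I would identify the restriction of $\iota\colon \mathcal{E}_P \to \mathcal{E}_P$ to $F_t$ with the covering involution of the degree-$2$ map $\kappa|_{E_{P, b(Q)}}\colon E_{P, b(Q)} \to L_{\kappa(P), \kappa(b(Q))}$, i.e.\ the hyperelliptic involution of this genus-$1$ double cover (this follows since $\kappa \circ b$ is $\iota$-equivariant by construction). Using the divisor equivalence $(A) + (\iota(A)) \sim (P) + (\iota(P))$ on $E_{P, b(Q)}$, both sides being $\kappa|_{E_{P,b(Q)}}$-preimages of points of $L_{\kappa(P), \kappa(b(Q))}$, this hyperelliptic involution translates into the group law as follows: when $P \notin \mathcal{R}$, one has $\iota(R) = P - R$ for every $R$, and $T_P$ corresponds to the group element $P$; when $P \in \mathcal{R}$, the origin $\iota(P) = P$ is a Weierstrass point of the cover, so $\iota(R) = -R$ and $T_P = 0$.

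With these translations in hand, the computations are immediate. For the second diagram: $[-1]Q$ corresponds to the group element $-b(Q)$ on $E_{P, b(Q)}$, whose associated divisor class (via the dictionary $A \leftrightarrow (A) - (\iota(P))$) is $2(\iota(P)) - (b(Q))$, exactly matching the definition of $\phi(P, b(Q))$. For the first diagram: substituting the respective expressions for $\iota$ and $T_P$ into $[2]\iota(Q) - T_P$ yields the group element $P - 2b(Q)$ on $E_{P, b(Q)}$ in both cases (when $P \in \mathcal{R}$, the element $P$ equals $0$ and $\iota(Q) = -Q$, giving $[2]\iota(Q) - T_P = -2b(Q) = P - 2b(Q)$); its divisor class is $(P) + 2(\iota(P)) - 2(b(Q))$, and applying $(\iota(b(Q))) \sim (P) + (\iota(P)) - (b(Q))$ rewrites this as $2(\iota(b(Q))) - (P)$, matching the definition of $\phi(b(Q), P)$.

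The main obstacle is managing the two cases $P \notin \mathcal{R}$ and $P \in \mathcal{R}$ in parallel, in particular identifying the correct form of the hyperelliptic involution and the group element corresponding to $T_P$ in each case. Once these translations are fixed, the equality of the two rational maps in each diagram reduces to the above divisor-class manipulations on a dense open subset of $\mathcal{E}_P$ and extends by density.
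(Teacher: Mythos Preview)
Your proposal is correct and follows essentially the same approach as the paper: restrict to a Zariski-general fiber of $\nu$, identify it via $b$ with an anticanonical curve on $X$ with origin $\iota(P)$, and check that both sides of each diagram agree as divisor-class identities there. The only difference is cosmetic: the paper keeps $\iota(Q)$ as a geometric point and reads off directly that $[2]\iota(Q)-P$ has divisor class $2(\iota(Q))-(P)$, whereas you first rewrite the Geiser involution in group-law form ($R\mapsto P-R$, resp.\ $R\mapsto -R$) and then reverse that substitution at the end via the same hyperelliptic relation $(A)+(\iota(A))\sim (P)+(\iota(P))$.
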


\begin{proof}
    We prove the commutativity of the first one as the second one is proved analogously. It suffices to verify the commutativity for a Zariski-general fiber $\left(\mathcal{E}_{P}\right)_{t}$ of the map $\nu\colon\mathcal{E}_P \to \P^1$. This (elliptic) fiber is identified via $b$ with a Zariski-general element of the pencil $|{-K_X-P}|$ (see also Remark \ref{rem: elliptic fibers anticanonical curves}), and the origin of $\left(\mathcal{E}_{P}\right)_{t}$ is identified via $b$ with the point $\iota(P)$. Note that the map $[2]\colon  \mathcal{E}_P \to \mathcal{E}_P$ restricts by definition to multiplication by $2$ on $\left(\mathcal{E}_{P}\right)_{t}$, and thus ${[2]\circ \iota - T_P}$ restricts to the map $Q \mapsto [2]\iota(Q)-P$ on $\left(\mathcal{E}_{P}\right)_{t}$. On the other hand, ${\phi(-,P)}$ restricts on $\left(\mathcal{E}_{P}\right)_{t}$ to the map sending a point $Q$ to the unique point $R$ of $\left(\mathcal{E}_P\right)_t$ such that $(R) \sim 2 (\iota(Q))- (P)$. 
    The last condition is equivalent to $(R)- (\iota(P)) \sim 2(\iota(Q))- (P)- (\iota(P))$, which is the same as $R=[2]\iota(Q)-P$ since $\iota(P)$ is the origin of the elliptic curve $\left(\mathcal{E}_{P}\right)_{t}$.
\end{proof}

\begin{remark}\label{rem:inv}
It follows from the commutativity of the second diagram in Proposition~\ref{prop:comm} that the domain of the rational map ${\phi(P,-)}\colon  X  \dashrightarrow X$ contains $X \setminus \{P, \iota(P)\}$, and that ${\phi(P,-)}$ induces an involution on 
\begin{equation*}
    X \setminus b(E_P,E_P', [-1] E_P)= X \setminus \{P,\iota(P), C_P\},
\end{equation*}
since $b([-1]E_P)=C_P$ (resp.\ $b([-1]E_P)=P$) when $P$ is general (resp.\ otherwise) by Proposition \ref{prop:cpep}.
\end{remark}

\begin{remark}\label{Rmk:domain restriction}
    In Proposition \ref{prop:comm} and Remark \ref{rem:inv} we proved that the restriction $\phi(P,-)$ is well-defined on $X \setminus \{P, \iota(P)\}$. Notice on the other hand how, through Proposition \ref{prop:domain} alone, we could a priori only infer that $\phi(P,-)$ is a morphism on $U_{\phi} \cap (\{P\} \times X)$, which is in general smaller than $X \setminus \{P, \iota(P)\}$. For instance, it does not contain any point on any of the lines passing through~$P$. This phenomenon is consistent with the fact that the domain of the restriction of a rational map may in general be larger than the restriction of the domain.
\end{remark}

We denote by $\phi_P$ (resp.\ $\phi^P$) the rational map $\phi(P,-)$ (resp.\ $\phi(-,P)$). This notation serves also the purpose of making clearer (notationwise) the domain issue raised in Remark \ref{Rmk:domain restriction}: when $Q$ (resp.\ $P$) lies in the domain of $\phi(P,-)$ (resp.\ $\phi(-,Q)$) but $(P,Q)$ does not lie in the domain of $\phi$, we will only use the notation $\phi_P(Q)$ (resp.\ $\phi^Q(P)$), and not the ambiguous notation $\phi(P,Q)$.

\subsubsection{Key proposition}
In this subsubsection we give a proposition that will be repeatedly used in the proof of Theorem \ref{thm:main1}. Recall that $P_0 \in X(k)$ is fixed and very general. Let $C_{P_0}$ be the corresponding curve in Definition \ref{def:dp2curves}.

\begin{proposition}\label{prop:drm}
Let $D$ be an irreducible curve on $X$.
Then the rational map 
$$
\psi= \phi |_{C_{P_0}\times D}\colon  C_{P_0}\times D \dashrightarrow X
$$
is dominant.
\end{proposition}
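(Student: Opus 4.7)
The plan is to argue by contradiction. Suppose $\psi$ is not dominant; passing to an irreducible component of the closure of the image, we may assume the image lies in an irreducible curve $C^* \subset X$. For each generic $P \in C_{P_0}$, the birational involution $\phi_P$ (Remark~\ref{rem:inv}) restricts on $D$ to a birational morphism whose image is an irreducible curve contained in $C^*$, hence equal to $C^*$. Thus $\phi_P(D) = C^*$ and, by involution, $\phi_P(C^*) = D$, for all $P$ in a dense open subset $U \subset C_{P_0}$.

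A key intermediate claim is: for $R_0$ in a dense open subset of $X$, the map $\eta_{R_0}\colon C_{P_0} \dashrightarrow X$, $P \mapsto \phi(P, R_0)$, is non-constant. Indeed, if it were constant for $R_0$ in a dense open subset of $X$, then for generic $P_1, P_2 \in C_{P_0}$ we would have $\phi_{P_1}(R_0) = \phi_{P_2}(R_0)$ on a dense open subset, forcing $\phi_{P_1} \circ \phi_{P_2} = \id$ and hence $\phi_{P_1} = \phi_{P_2}$. Since $\phi_P$ is determined by $P$—as $\phi_P$ has $P$ as its unique indeterminacy point, being biregularly resolved via $\mathcal{E}_P$ with $[-1]$ acting by swapping $E_P$ and $[-1]E_P$ while fixing the zero-section $E_{\iota(P)}$—we conclude $P_1 = P_2$, a contradiction.

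Combining these, for generic $R_0 \in C^*$ we get $\eta_{R_0}(C_{P_0}) \subseteq D$ (since $\phi_P(R_0) \in \phi_P(C^*) = D$ for all $P \in U$), and by non-constancy and the irreducibility of $D$, the closure $\overline{\eta_{R_0}(C_{P_0})}$ equals $D$. The main obstacle is then to derive a contradiction from this equality holding uniformly for $R_0 \in C^*$. My plan is to use Proposition~\ref{prop:comm} to identify $\eta_{R_0}|_{C_{P_0}}$ with the composition of the strict transform of $C_{P_0}$ in $\mathcal{E}_{R_0}$ with the morphism $[2]\iota - T_{R_0}$, and then show via an explicit or Picard-theoretic computation—exploiting the very general position of $P_0$, so that $C_{P_0}$ interacts non-degenerately with the family of elliptic fibrations $\{\mathcal{E}_{R_0}\}_{R_0 \in C^*}$—that the resulting image curve varies non-trivially as $R_0$ ranges over $C^*$, contradicting its constant equality with $D$.
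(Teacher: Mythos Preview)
Your setup is sound: assuming the image lies in an irreducible curve $C^*$, you correctly deduce that the involutions $\phi_P$ (for generic $P\in C_{P_0}$) swap $D$ and $C^*$, and your intermediate claim that $\eta_{R_0}$ is non-constant for generic $R_0$ is reasonable (though the assertion that $P$ is the \emph{unique} indeterminacy point of $\phi_P$ deserves more care---it follows from Proposition~\ref{prop:comm} because $[-1]$ sends $E_P$ to a section that $b$ maps onto the curve $C_P$, while $E_{\iota(P)}$ is fixed and contracted to the point $\iota(P)$).

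The genuine gap is the final step. You have established that $\overline{\eta_{R_0}(C_{P_0})}=D$ for every generic $R_0\in C^*$, and you then announce a ``plan'' to show, via an unspecified Picard-theoretic or explicit computation, that this image curve must vary with $R_0$. But no such computation is given, and it is not at all clear how one would carry it out: the statement ``$C_{P_0}$ interacts non-degenerately with the family $\{\mathcal{E}_{R_0}\}$'' is not a mathematical argument. Without this step the contradiction is never reached.

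The paper's proof closes the gap with a completely different idea that you are missing. The curve $C_{P_0}$ has a \emph{triple singularity} at $P_0$ (Note~\ref{Note: multiplicity}). For each generic $Q\in D$, the rational self-map $[2]\circ\iota - T_Q$ of $\mathcal{E}_Q$ is \'etale away from the strict transform of $\mathcal{R}$; since $P_0\notin\mathcal{R}$, it is \'etale near the strict transform of $P_0$, so the image of $C_{P_0}$ is singular at the image of $P_0$. Pushing down via $b$, the curve $\phi(C_{P_0},Q)\subset C$ is singular at $\phi(P_0,Q)$. As $Q$ varies over $D$, the points $\phi(P_0,Q)$ sweep out a one-dimensional locus (this uses that $\phi_{P_0}$ is an involution when $D\neq C_{P_0}$, and a separate Mordell--Weil torsion-freeness argument when $D=C_{P_0}$). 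Hence $C$ would have infinitely many singular points---a contradiction. This singularity-propagation mechanism is the heart of the argument and is absent from your proposal.
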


\begin{proof}
    We may assume that $k$ is algebraically closed.
    Note that $U_{\phi} \cap (C_{P_0} \times D) \neq \emptyset$, thus $\psi$ is well-defined. Moreover, one checks that:
    \begin{itemize}
        \item $U_{\phi}\cap (C_{P_0} \times \{Q\})\neq \emptyset$ for all $Q \in D$;
        \item $U_{\phi}\cap (\{P\} \times D)\neq \emptyset$ for all $P \in C_{P_0}$ unless $D$ is a line, and $P \in (D \cup \iota(D))$.
    \end{itemize}
    We denote by $L$ the (finite) set $(D \cup \iota(D)) \cap C_{P_0}$ if $D$ is a line, while $L:= \emptyset$ otherwise. (Note that, since $P_0$ is very general, it is not contained in $L$.) Proposition \ref{prop:domain} then allows us to restrict $\psi$ (equiv.\ $\phi$) on $C_{P_0} \times \{Q\}$ for all $Q \in D$  and on $\{P\} \times D$ for all $P \in C_{P_0} \setminus L$, which we will both do repeatedly throughout the proof.

    Since $C_{P_0} \times D$ is irreducible, so is the closed image of $\psi$. To show that $\psi$ is dominant, assume by contradiction that its closed image is a one-dimensional curve~$C$.

    For a Zariski-general $Q \in D$ we have, by Proposition \ref{prop:comm}, a commutative diagram:
    \begin{equation*}
        \begin{tikzcd}
            \mathcal{E}_Q \arrow[d,"b"] \arrow[r, "{[2]\circ \iota - T_Q}", dashed] & \mathcal{E}_Q \arrow[d,"b"] \\
            X \arrow[r, "\phi^Q", dashed]                      & X        
        \end{tikzcd}
    \end{equation*}
     (Recall that $\phi^Q\colon X \dashrightarrow X$ is the restriction $\phi(-,Q)$.)
By \cite[\S4]{KUW}, all the singularities of the singular fibers of $\mathcal{E}_Q \xrightarrow{\nu} \mathbb{P}^1$ lie on the strict transform $\widetilde{\mathcal{R}}$ under $b$ of the ramification curve $\mathcal{R}$. In particular, $\mathcal{E}_Q \xdashrightarrow{[2]\circ \iota -T_Q} \mathcal{E}_Q$ is well-defined and étale outside $\widetilde{\mathcal{R}}$. 
    
Since $P_0$ is very general, the curve $C_0 := {C}_{P_0}$ has a (triple) singularity at the point~$P_0$. Since $P_0$ is not blown-up under $b$ ($Q$ and $\iota(Q)$ are Zariski-general on some curve, and thus different from $P_0$ in particular), it follows that the strict transform $C_1$ of $C_0$ in $\mathcal{E}_Q$ has a singularity at the point $P_1:=b^{-1}(P_0)$. Moreover, this point does not lie on $\widetilde{\mathcal{R}}$ (as $P_0 \notin \mathcal{R}$ by assumption), thus $[2]\circ \iota -T_Q$ is étale at $P_1$. Since the image of a curve singularity under an unramified map is still a curve singularity, it follows that the irreducible curve $C_2:=([2]\circ \iota -T_Q)(C_1)$ has a singularity at $P_2:=([2]\circ \iota -T_Q)(P_1)$. 

Note that $b(C_2)$ is one-dimensional, as otherwise the irreducible curve $C_2$ would be contained in the exceptional divisor for $b$ and would thus be an open subvariety of a smooth rational curve, contradicting that it has a singularity. Thus $b(C_2)$ is a curve in $X$ with a singularity at $b(P_2)$. By commutativity of the diagram above, this translates into saying that $\phi(C_0,Q)$ is a curve with a singularity at $\phi^{P_0}(Q)=\phi(P_0,Q)$ (since $P_0 \notin L$, $(P_0,Q)$ lies in the domain of $\phi$). Since $\phi(C_0,Q) \subset \phi(C_0,D) \subset C$, it follows that $\phi(C_0,Q)$ is an open subcurve of $C$, and thus the latter has a singularity at $\phi(P_0,Q)$ as well. 

We now divide into two cases. In both cases, we will derive a contradiction to the claim that the image of $\psi$ is a one-dimensional curve $C$ by showing that $C$ has too many singularities.

    \textbf{Case 1:} $D \neq C_{P_0}$. Note that $\phi({P_0},D) \subset C$, and since $\phi({P_0},D)$ is one-dimensional by Remark \ref{rem:inv}, this implies that $\phi({P_0},D)$ is an open subcurve of $C$. Since $C$ is singular at $\phi(P_0,Q)$ for all Zariski-general $Q \in D$, we deduce that $C$ is singular at all but finitely many points of the one-dimensional open subcurve $\phi(P_0,D)$, which leads to the desired contradiction.

   \textbf{Case 2:} $D=C_{P_0}$. In this case, $D$ has the singularity $P_0$. Thus one may argue using the second diagram of Proposition \ref{prop:comm} to prove that $C$ is singular at $\phi(Q,P_0)$ for all Zariski-general $Q \in C_{P_0}$. 
   
We claim now that $\phi(C_{P_0},P_0)$ is one-dimensional. In fact, assume by contradiction that it is not, then we would have that $\phi(C_{P_0},P_0)$ is constantly equal to some point $R$ (when defined). In other words, for all Zariski-general $Q \in C_{P_0}$ we would have:
   \begin{enumerate}
       \item $R$ belongs to $E_{Q,P_0}$;  and 
       \item  $2(\iota(Q))-(P_0) \sim (R)$ on $E_{Q,P_0}$.
   \end{enumerate}
Condition (1) implies that $R=P_0$ or $R=\iota(P_0)$. Remembering that, since $Q$ is contained in $C_{P_0}$, we have $(Q) \sim 2(\iota(P_0)) - (P_0)$ and thus $(\iota(Q)) \sim 2(P_0) - (\iota(P_0))$, we deduce (using Condition (2)) with a small computation that $2(P_0) - 2 (\iota(P_0)) \sim 0$ when $R = P_0$ and that  $3(P_0) - 3(\iota(P_0)) \sim 0$ when $R = \iota(P_0)$. In either scenario, we deduce that the section $E_{P_0}$ is a non-trivial torsion section of the elliptic surface $\mathcal{E}_{P_0}$. Recalling that $P_0$ is very general, we have that the elliptic surface $\mathcal{E}_{P_0}$ is the blow-up of the del Pezzo surface $\operatorname{Bl}_{P_0}X$ of degree 1 in the base-point $\iota(P_0)$ of its anticanonical pencil. Such elliptic surfaces are known to have a torsion-free Mordell-Weil group \cite[Thm.~7.4]{SS19}, leading to the desired contradiction and thus proving that $\phi(C_{P_0},P_0)$ is one-dimensional.

We may now conclude as in Case 1, since we proved that $C$ is singular at all points of the one-dimensional (open) subcurve $\phi(C_{P_0},P_0)$.
\end{proof}

\section{Proof of main results}
Let $X$ be a del Pezzo surface of degree $2$ over a field $k$. As in \cite{DS}, we will iteratively produce covers $f_n\colon  Z_n \dashrightarrow X$ with desirable properties relative to the goal of showing that the rational points of $X$ are abundant and well-distributed. We assume again that $P_0 \in X\left(k\right)$ is a very general point (Definition~\ref{def:gen}).

\begin{note} \label{note:poschar}
Henceforth, we will assume that $\ch k = 0$. Emulating our strategy in positive characteristic, one encounters issues regarding separability of morphisms; these obstacles appear to be surmountable, at least when $\ch k \neq 2$, but we do not explore this here.
\end{note}

\subsection{New unirationality cover}
\begin{mydef}
Set $Z_1 :=\mathbb{P}^1_k$, and let $f_1\colon  Z_1 \rightarrow X$ be the desingularisation $\mathbb{P}^1 \rightarrow C_{P_0}$.

For $n = 2,3$, define $Z_n := (\P^1)^n$, and let $f_n\colon  Z_n \dashrightarrow X$ be the rational map

$$
f_n\colon Z_n = Z_1 \times Z_{n-1} \xdashrightarrow{\left(f_1,f_{n-1}\right)} X \times X \xdashrightarrow{\phi} X. 
$$

\end{mydef}
\begin{proposition} \label{prop:rat}
$Z_2$ is rational, and the rational map 
$$f_2\colon Z_2\xrightarrow{\left(f_1,f_1\right)} C_{P_0}\times C_{P_0}\xdashrightarrow{\phi}X$$ 
is dominant.
\end{proposition}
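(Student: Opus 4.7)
The rationality of $Z_2 = \mathbb{P}^1 \times \mathbb{P}^1$ is immediate from the definition, so the content of the proposition lies in the dominance of $f_2$. My plan is to factor this claim into two separate dominance statements and then appeal to the key Proposition \ref{prop:drm} that was just established.

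First, I would observe that the curve $C_{P_0}$ is rational (as recalled after Definition \ref{def:dp2curves}, invoking \cite[Thm.~3.1]{STVA}), so the desingularization morphism $f_1\colon \mathbb{P}^1 \to C_{P_0}$ is birational. Consequently the product map
\[
(f_1,f_1)\colon Z_2 = \mathbb{P}^1 \times \mathbb{P}^1 \longrightarrow C_{P_0} \times C_{P_0}
\]
is itself birational, hence in particular dominant.

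Next, I would apply Proposition \ref{prop:drm} with the choice $D = C_{P_0}$. Since $C_{P_0}$ is an irreducible curve on $X$, the proposition directly asserts that the restriction
\[
\phi\big|_{C_{P_0} \times C_{P_0}}\colon C_{P_0} \times C_{P_0} \dashrightarrow X
\]
is dominant. It remains only to note that the composition of two dominant rational maps between irreducible varieties is dominant (the image of the first map meets the domain of the second because that domain is a non-empty open subset of the target of the first, and dominance of the first implies that this open is hit). Applying this with $(f_1,f_1)$ and $\phi|_{C_{P_0}\times C_{P_0}}$ yields the dominance of $f_2$.

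In this sense there is no real obstacle at this stage: all the geometric work needed, namely that one cannot produce a one-dimensional image when sweeping $\phi$ over $C_{P_0} \times C_{P_0}$, has already been carried out in Proposition \ref{prop:drm} (particularly Case 2 there, which handled exactly the $D = C_{P_0}$ situation using the torsion-freeness of the Mordell--Weil group of $\mathcal{E}_{P_0}$). The present proposition is therefore essentially a formal consequence, recording that the rational parametrization $Z_2 \dashrightarrow X$ built from two iterations of the point procedure surjects onto $X$ up to closure.
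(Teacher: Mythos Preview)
Your proof is correct and matches the paper's approach exactly: the paper's proof consists of the single line ``Follows from Proposition \ref{prop:drm},'' and you have simply unpacked this by applying that proposition with $D = C_{P_0}$ and composing with the birational $(f_1,f_1)$.
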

\begin{proof}
Follows from Proposition \ref{prop:drm}.
\end{proof}

\begin{remark}
Note that Proposition \ref{prop:rat} implies that $X$ is unirational. The cover $f_2\colon  Z_2 \dashrightarrow X$ differs from the one used to prove unirationality in \cite{STVA}.
\end{remark}

\begin{remark}
Interestingly, the analogue of this construction for a cubic surface $W$ does not imply unirationality, since, in that case, $C_{P_0}$ is the intersection of $W$ with the tangent plane $T_{P_0}W$ and $\phi_W(P,Q)$ is defined as the third intersection point of $L_{P,Q} \cap W$. In particular, the lines defining the points in $\phi_W(C_{P_0}, C_{P_0})$ are all contained in $T_{P_0}W$, hence $\phi_W(C_{P_0}, C_{P_0}) \subset T_{P_0}W \cap W = C_{P_0}$.
\end{remark}

\subsection{Hilbert-unirationality}
Recall that $f_3\colon  Z_3 \dashrightarrow X$ is given by 
$$
f_3\colon  Z_3 = Z_1 \times  Z_2\xdashrightarrow{(f_1,f_2)} X\times X\xdashrightarrow{\phi} X.
$$ 
\begin{proposition}\label{GIGF}
$Z_3$ is rational, and $f_3$ is dominant with geometrically integral generic fiber. In particular, $X$ is Hilbert-unirational.
\end{proposition}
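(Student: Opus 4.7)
Rationality of $Z_3 = Z_1 \times Z_2 \cong (\mathbb{P}^1)^3$ is immediate. For dominance of $f_3 = \phi \circ (f_1, f_2)$, I plan to combine Proposition \ref{prop:rat} (dominance of $f_2$) with Remark \ref{rem:inv}: for a general $a \in Z_1$, the map $\phi_{f_1(a)}\colon X \dashrightarrow X$ is a birational involution and hence dominant, so the restriction $f_3|_{\{a\} \times Z_2} = \phi_{f_1(a)} \circ f_2$ is dominant, whence so is $f_3$.

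For geometric integrality of the generic fiber, let $\eta$ be the generic point of $X$ with residue field $K$, and write $F \subset Z_{3,\overline{K}}$ for the geometric generic fiber, which is a curve. I propose to analyze $F$ via the projection $\pi_{12}\colon F \to Z_{1,\overline{K}} \times Z_{1,\overline{K}}$, $(a,b,c) \mapsto (a,b)$. Setting $P := f_1(a)$, $Q := f_1(b)$, and writing $R$ for the $\overline{K}$-point of $X$ coming from $\eta$, the defining equation $\phi(P, \phi(Q, f_1(c))) = R$ simplifies, upon applying $\phi_P$ and then $\phi_Q$ (both involutions by Remark \ref{rem:inv}), to $f_1(c) = \phi(Q, \phi(P, R))$. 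Thus $c$ is uniquely determined by $(a,b)$ precisely when $\phi(Q, \phi(P, R)) \in C_{P_0}$, so $\pi_{12}$ is birational onto its image. After identifying $Z_1 \cong C_{P_0}$ birationally via $f_1$, this image is the curve
\[
\Gamma_R := \bigl\{(P, Q) \in C_{P_0}^2 : \phi(Q, \phi(P, R)) \in C_{P_0}\bigr\},
\]
and the geometric integrality of $F$ reduces to that of $\Gamma_R$ over $\overline{K}$.

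Verifying that $\Gamma_R$ is geometrically integral is where I anticipate the main difficulty. My plan is to emulate the strategy used to establish Proposition \ref{prop:drm}: exploit the triple singularity of $C_{P_0}$ at the very general point $P_0$ (Note \ref{Note: multiplicity}), together with the fact that the doubling map $[2]\circ\iota - T_Q$ on the elliptic surface $\mathcal{E}_Q$ is \'etale away from the strict transform of the ramification curve, to force singular structure to propagate along any component of $\Gamma_R$. Concretely, one projects $\Gamma_R$ onto $C_{P_0}$ via $(P, Q) \mapsto P$ and argues that the resulting finite cover must be connected, since any hypothetical proper subcomponent would be incompatible with the singular trace of $C_{P_0}$ at $P_0$ propagated through the iterated $\phi$-operations. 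Geometric reducedness is automatic in characteristic zero by generic smoothness onto the smooth variety $X$. These ingredients combine to show that $F$ is geometrically integral, whence $f_3$ witnesses $X$ as Hilbert-unirational.
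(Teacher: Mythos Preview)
Your setup through the birational reduction to the curve $\Gamma_R \subset C_{P_0}^2$ is correct and is essentially Swinnerton-Dyer's strategy for cubic surfaces transplanted to degree $2$. However, the crux of the argument---showing that $\Gamma_R$ is irreducible---is not actually carried out. The sentence ``any hypothetical proper subcomponent would be incompatible with the singular trace of $C_{P_0}$ at $P_0$ propagated through the iterated $\phi$-operations'' is not a proof: in Proposition~\ref{prop:drm} the singularity-propagation trick shows that a putative \emph{image curve} in $X$ would have infinitely many singular points, forcing the image to be $2$-dimensional. That mechanism produces a \emph{dominance} statement, not an \emph{irreducibility} statement for a curve sitting in a product. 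To run a Swinnerton-Dyer-style argument here you would need to compute the class of $\Gamma_R$ in $\Pic(C_{P_0}\times C_{P_0})$ (or a suitable blow-up) and analyse its singularities explicitly, and you have not done this. So as written there is a genuine gap at the decisive step.

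The paper's route is different and avoids this computation entirely. It argues by contradiction via a rational Stein factorization (Lemma~\ref{LemStein}): if the generic fiber of $f_3$ were not geometrically integral, $f_3$ would factor through a non-trivial finite cover $X'\to X$. Specialising the first $\mathbb{P}^1$-coordinate to any $Q'$ in the domain shows $X'\to X$ is a subcover of $\phi_Q\circ f_2$ for \emph{every} such $Q=f_1(Q')$. Since $X$ is (algebraically) simply connected, $X'\to X$ is ramified, and its branch divisor must lie inside the branch locus of $\phi_Q\circ f_2$ for all such $Q$; using that $\phi_Q$ is a regular involution off $\{Q,\iota(Q),C_Q\}$, this forces a fixed irreducible divisor $D$ to satisfy $\phi_Q(D)\subset B$ (the branch divisor of $f_2$) for Zariski-general $Q\in C_{P_0}$, i.e.\ $\phi(C_{P_0}\times D)\subset B$. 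That contradicts Proposition~\ref{prop:drm}. The gain is that the whole question is reduced to the already-established dominance result, with simple connectedness of $X$ doing the work that your explicit irreducibility analysis would have to do.
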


Before coming to the proof, we present a rational variant of Stein factorization. 
\begin{lemma}[Rational Stein Factorization]\label{LemStein}
    Let $k$ be a field and $f\colon W \dashrightarrow V$ a dominant rational map of integral $k$-varieties. Then there exists a finite integral normal cover $V' \to V$ and a decomposition of $f$ as
    \begin{equation}\label{Eq:RationalSteinDec}
        f\colon  W \xdashrightarrow{f'} V' \to V,
    \end{equation}
    for some dominant rational map $f'$ with geometrically integral generic fiber.
\end{lemma}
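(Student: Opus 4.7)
The plan is to perform the factorization at the level of function fields, mirroring the classical Stein factorization for proper morphisms. Set $K := k(V)$ and $L := k(W)$; the dominant rational map $f$ induces an inclusion $K \hookrightarrow L$. Since $W$ is a $k$-variety, $L$ is finitely generated over $k$, hence over $K$, and therefore the algebraic closure $K'$ of $K$ in $L$ is a finite field extension of $K$ (a standard consequence of the finite generation of $L/K$).

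Next, I would take $V'$ to be the normalization of $V$ in $K'$. Since $V$ is a variety over a field, it is excellent and in particular Nagata, so the resulting morphism $\pi \colon V' \to V$ is finite and $V'$ is an integral normal $k$-variety with function field $K'$. The inclusion $K' \hookrightarrow L$ then corresponds to a dominant rational map $f' \colon W \dashrightarrow V'$ satisfying $\pi \circ f' = f$; concretely, $f'$ is realized by restricting $f$ to an open $U \subseteq W$ where it is a morphism and noting that the inclusion $K' \subset L$ determines a rational section of the finite projection $U \times_V V' \to U$, namely the one picking out the generic point of $U$ inside the fiber product.

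It remains to verify that $f'$ has geometrically integral generic fiber. The generic fiber is an integral $K'$-scheme whose function field is $L$, and by construction $K'$ is algebraically closed in $L$: any element of $L$ algebraic over $K'$ is algebraic over $K$, and hence already lies in $K'$. In characteristic zero (the running assumption of this section via Note \ref{note:poschar}), algebraic closedness of $K'$ in $L$ is precisely the content of the primarity of $L/K'$, which together with the automatic separability yields geometric integrality of the generic fiber of $f'$.

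The main subtlety, rather than a genuine obstacle, is tying these pieces together cleanly with the right hypotheses and references: the finiteness of $K'/K$, the finiteness of $\pi$ via excellence of $V$, and the equivalence (in characteristic zero) between algebraic closedness of $K'$ in $L$ and geometric integrality of the generic fiber are each standard but should be cited carefully. In positive characteristic the same strategy works, provided one replaces the algebraic closure of $K$ in $L$ by the separable algebraic closure and takes care of the separability of $L/K'$, but this is not needed here.
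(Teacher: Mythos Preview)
Your approach is essentially identical to the paper's: both take $K'$ to be the algebraic closure of $k(V)$ in $k(W)$, set $V'$ equal to the normalization of $V$ in $K'$, and deduce geometric integrality of the generic fiber of $f'$ from the fact that $K'$ is algebraically closed in $k(W)$. The only cosmetic difference is that the paper cites \cite[Exercise~3.2.12]{LIU} for the last step without invoking the characteristic-zero hypothesis, whereas you explicitly appeal to Note~\ref{note:poschar}; your aside about positive characteristic is not needed for the application.
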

\begin{proof}
    Let $L$ be the algebraic closure of $k(V)$ in $k(W)$. This is a finite extension of $k(V)$. Let $V'\to V$ be the relative normalization of $V$ in $L$ (\cite[Def.~4.1.24]{LIU}). Then $k(V') = L$, the embedding $k(V') \subset k(W)$ defines a dominant rational map $f'\colon W \dashrightarrow V'$, and $f$ factors as in \eqref{Eq:RationalSteinDec} by construction. The generic fiber of $f'$ is a $k(V')$-variety with function field $k(W)$. Noting that $k(V')$ is algebraically closed in $k(W)$ and that a variety is geometrically integral if and only if its base field is algebraically closed in its function field \cite[Exercise~3.2.12]{LIU}, this proves the lemma.
\end{proof}

In the following proof, it will prove useful to talk about \emph{branch divisors} of generically finite rational maps. We give the following definition.

\begin{mydef}\label{def:branch}
    Let $\psi\colon X_1 \dashrightarrow X_2$ be a generically finite rational map of $k$-varieties with $X_2$ smooth. Let $X'_1 \to X_2$ be the relative normalization of $X_2$ in $k(X_1)$. We define the {\em branch locus} of $\psi$ to be the branch locus of the finite cover $X'_1 \to X_2$.
\end{mydef}

Since $X_2$ is smooth and $X'_1$ is normal, the branch locus is always of pure codimension $1$ by the Zariski--Nagata purity theorem \cite[Tag 0BMB]{SP}. Therefore, we will also refer to the branch locus as the {\em branch divisor}.

Note that, if $X_1 \xdashrightarrow{\psi_1} X_2 \xdashrightarrow{\psi_2} X_3$ is a composition of generically finite maps to smooth varieties, then the branch divisor of $\psi_2$ is contained in the branch divisor of $\psi_2 \circ \psi_1$.

\begin{proof}[Proof of Proposition \ref{GIGF}]
The rationality of $Z_3=Z_2 \times Z_1$ follows from that of $Z_2$ and~$Z_1$. That $f_3$ is dominant follows from the fact that $f_2$ is dominant (Proposition \ref{prop:rat}) and so is (evidently) $\phi|_{C_{P_0} \times X}\colon  C_{P_0} \times X \dashrightarrow X$.

It remains to prove that the generic fiber of $f_3$ is geometrically integral. To do so, we will assume without loss of generality that $k$ is algebraically closed.

Let us assume by contradiction that the generic fiber of $f_3$ is not geometrically integral. Then by Lemma \ref{LemStein}, we get a factorization
\[
f_3\colon  Z_3 \xdashrightarrow{f_3'} X' \to X
\]
for some rational map $f_3'$ and some {\em non-trivial} finite integral normal cover $X' \to X$. We thus get a commutative diagram:
\[
\begin{tikzcd}
Z_3=\mathbb{P}^1\times Z_2\arrow[rd, labels=below left, "{(f_1,f_2)}",dashed] \arrow[rrd, "f_3'", dashed] &  &   \\
       & C_{P_0}\times X  \arrow[rd, "\phi", labels=below left, dashed] & X' \arrow[d] \\
    &                                             & X
\end{tikzcd}
\]
Let $U\subset Z_3$ be the domain of $f_3'$, and take any point $Q' \in \pi(U)\subset \mathbb{P}^1$, where $\pi\colon Z_3\longrightarrow \mathbb{P}^1$ is the projection on the first coordinate. Let $\phi_Q\colon X \dashrightarrow X$ be the specialized rational map $\phi(Q,-)$, where $Q=f_1(Q')\in C_{P_0}$. We may specialize the diagram above to obtain:
\[
\begin{tikzcd}
Z_2 \arrow[rd,labels=below left, "f_2", dashed] \arrow[rrd, "{f_3'(Q',-)}", dashed] & &              \\
 & X \arrow[rd, labels=below left,"\phi_Q", dashed] & X' \arrow[d] \\
& & X           
\end{tikzcd}
\]
In other words, $X' \to X$ is a subcover of $Z_2 \xdashrightarrow{f_2} X 
 \xdashrightarrow{\phi_Q} X$. The former is an irreducible finite cover of degree $\geq 2$, and is thus necessarily ramified as $X$ is algebraically simply connected \cite[Cor.~4.18]{D} and $k$ is algebraically closed. Let $B' \subset X$ be its ({\em non-trivial}) branch divisor. Then $B'$ must be contained in the branch divisor of $\phi_Q \circ f_2: Z_2 \dashrightarrow X$. 
 
 We will reach a contradiction by showing that there is no divisor on $X$ contained in the intersection of the branch divisors of $\phi_Q \circ f_2: Z_2 \dashrightarrow X$ for all $Q\in f_1(\pi(U))$. Assume by contradiction that $D \subset X$ is one such divisor, which we may assume to be irreducible. 

Let $B$ be the branch divisor of $Z_2 \xdashrightarrow{f_2} X$, and take $Q\in f_1(\pi(U))$. By Remark~\ref{rem:inv}, the map $\phi_Q$ is a (regular) involution on $U_1=X \backslash\left\{Q, \iota(Q), C_Q\right\}$. It follows that $\phi_Q\left(D \cap U_1\right)$ is contained in $B \cap U_1$. Choosing $Q\in f_1(\pi(U))$ sufficiently general, the curve $C_Q$ intersects both $D$ and $B$ properly. Indeed, for a very general $Q$, the curve $C_Q$ has a unique singularity at the point $Q$. Thus the map $Q \mapsto C_Q$ is injective on the set of such points $Q$. Since $C_Q$ is irreducible (when defined) and so is $D$, we have that $D \cap C_Q$ is one-dimensional if and only if $C_Q=D$ (same for $B$). Because of the injectivity above, this can happen for at most one point $Q$. This implies that $D \cap U_1$ and $B \cap U_1$ are dense in $D$ and $B$ respectively. Then it follows that for general $Q$, we have $\phi_Q\left(D\right) \subset B$.

Since $f_1(\pi(U))$ is open in $C_{P_0}$, this holds for all Zariski-general $Q \in C_{P_0}$, which implies that the closed image of the composition
\[
C_{P_0} \times D  \hookrightarrow X \times X \xdashrightarrow{\phi} X
\]
is contained in $B$. In particular, it is one-dimensional, contradicting Proposition \ref{prop:drm}.\end{proof}

By Proposition \ref{prop:goodcover}, we thus obtain the following corollary. 

\begin{corollary}
If $k$ is a Hilbertian field, then any degree-$2$ del Pezzo surface $X$ with a general $k$-rational point satisfies the Hilbert property.
\end{corollary}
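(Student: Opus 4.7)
The plan is to chain Proposition \ref{GIGF} with Proposition \ref{prop:goodcover}, the only nontrivial reduction being the upgrade from a \emph{general} to a \emph{very general} rational point. Since we are in characteristic zero (per Note \ref{note:poschar}), this upgrade is routine.

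First I would apply Theorem \ref{thm:STVA} to the given general point of $X(k)$, which yields that $X$ is unirational over $k$. Any Hilbertian field is infinite (see \cite[Ch.~13]{FJ}), so the image of a rational parametrization of $X$ gives a Zariski-dense subset of $X(k)$. The $56$ exceptional curves of $X$ together with the ramification divisor $\mathcal{R}$ form a proper closed subset, hence by density we can select a point $P_0 \in X(k)$ avoiding all of these, i.e.\ a very general point in the sense of Definition \ref{def:gen}.

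With such a $P_0$ in hand, Proposition \ref{GIGF} directly furnishes the Hilbert-unirationality of $X$ (since its statement requires precisely a very general rational point, per the standing assumption at the start of Section \ref{sec:twogeomproc}). Invoking the first clause of Proposition \ref{prop:goodcover} then yields that $X$ satisfies the Hilbert property.

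No step presents a substantive obstacle: the geometric heart of the argument has already been carried out in Proposition \ref{GIGF}, and this corollary is essentially the formal assembly of that result with the general-to-very-general promotion via unirationality and with Proposition \ref{prop:goodcover}.
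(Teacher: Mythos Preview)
Your proposal is correct and matches the paper's own argument, which simply invokes Proposition~\ref{prop:goodcover} after Proposition~\ref{GIGF}. The general-to-very-general reduction you spell out was already absorbed into the standing assumption preceding Proposition~\ref{GIGF} (and note that in characteristic zero the field is automatically infinite, so the appeal to Hilbertianity for infiniteness is unnecessary).
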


\subsection{Split-unirationality}
Recall that we assume $\operatorname{char} k=0$.
Define $Z_6:=Z_3\times Z_3$, and let $f_6\colon Z_6\dashrightarrow X$ be the map

$$
f_6\colon Z_6 = Z_3 \times Z_3 \xdashrightarrow{(f_3,f_3)} X \times X \xdashrightarrow{\phi} X.
$$

We follow here the reasoning in \cite{DS}. Namely, the main result of this subsection is the following proposition, which shows that $f_6$ satisfies the assumptions of Proposition \ref{prop:ssfcover}, thus showing that $X$ is split-unirational (indeed, note that $Z_6=(\P^1)^6$ is rational). By Proposition \ref{prop:goodcover}, this implies that $X$ satisfies weak weak approximation.

\begin{proposition}\label{prop:su}
    There is an open subset $W\subset Z_6$ such that
    \begin{enumerate}[label={\em (\roman*)}]
        \item $f_6$ restricts to a morphism on $W$;
        \item $f_6|_W$ is smooth with split or empty fibers;
        \item $f_6(W)=X$.
    \end{enumerate}
In particular, $X$ is split-unirational.
\end{proposition}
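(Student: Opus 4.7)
The plan is to build an open $W \subset Z_6$ on which $f_6 = \phi \circ (f_3, f_3)$ is a composition of smooth morphisms with geometrically integral fibers. By Proposition \ref{GIGF}, generic smoothness in characteristic zero, and spreading out of geometric integrality of fibers, there is a dense open $V_3 \subset X$ over which $f_3$ restricts to a surjective smooth morphism with geometrically integral fibers. Let $V_\phi \subset U_\phi$ denote the smooth locus of $\phi$ (non-empty by generic smoothness) and set
\[
W := (f_3,f_3)^{-1}\bigl(V_\phi \cap (V_3 \times V_3)\bigr) \subset Z_6.
\]
Then $f_6|_W$ is smooth by composition, giving (i) and the smoothness part of (ii).

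To establish (iii) and the splitness in (ii), fix $R \in X$ and analyze $W \cap f_6^{-1}(R)$ via its projection to the first factor $Z_3$. By Remark \ref{rem:inv}, $\phi_P$ is a regular involution on $X \setminus \{P, \iota(P), C_P\}$, so the relation $\phi(P, f_3(z_2)) = R$ with $P := f_3(z_1)$ forces $f_3(z_2) = \phi_P(R)$. Hence the fiber of the projection over $z_1$ is $f_3^{-1}(\phi_P(R))$, which is non-empty and geometrically integral whenever $P$ satisfies the three open conditions $P \in V_3$, $\phi_P(R) \in V_3$, and $(P, \phi_P(R)) \in V_\phi$. The first holds on a dense open of $X$ by construction; the second because Proposition \ref{prop:comm} identifies $\phi^R$ with the dominant rational map $[2] \circ \iota - T_R$ on $\mathcal{E}_R$; and the third because the involution property makes $\phi|_{\{P\}\times X}$ a local isomorphism on the involution locus, so a differential-rank argument in the $X$-factor gives smoothness of $\phi$ at $(P, \phi_P(R))$. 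The projection image is thus a non-empty open subset of the irreducible variety $Z_3$, and $W \cap f_6^{-1}(R)$ fibers over it with geometrically integral fibers of constant dimension; by a standard fiber-product argument, it is itself geometrically integral, and in particular split. Lifting an admissible $P$ through $f_3$ (using surjectivity onto $V_3$) to a pair $(z_1, z_2)$ with $f_6(z_1, z_2) = R$ demonstrates (iii), and split-unirationality then follows from Proposition \ref{prop:ssfcover} after a rationality-preserving proper birational modification of $Z_6$ resolving the indeterminacy of $f_6$.

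The main obstacle is verifying that $V_\phi$ meets $\phi^{-1}(R)$ for every $R \in X$, not merely generic $R$; this is essential both for the non-emptiness of $W \cap f_6^{-1}(R)$ and for condition (iii). The resolution, extracted from the preceding paragraph, is that the involution property of $\phi_P$ automatically forces $\phi$ to be smooth at $(P, Q)$ whenever $Q$ lies in the involution locus of $\phi_P$; thus $V_\phi$ is sufficiently large to meet every fiber of $\phi$.
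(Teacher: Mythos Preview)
Your overall architecture mirrors the paper's: exploit the fact that $\phi_P=\phi(P,-)$ is an involution on $X\setminus\{P,\iota(P),C_P\}$ to force $\phi$ to be smooth with well-controlled fibers on a suitable open, then compose with $(f_3,f_3)$. The paper packages this via an explicit involution-stable open $U_{\mathrm{inv}}\subset U_\phi$ on which $(pr_1,\phi)$ is a global involution (Lemma~\ref{LemSmooth}), deducing at once that $\phi|_{U_{\mathrm{inv}}}$ is smooth with geometrically integral fibers; you instead take the bare smooth locus $V_\phi$ and argue fiberwise via projection to $Z_3$. Both routes are viable, and your differential-rank observation (surjectivity of $d\phi_{(P,Q)}$ from the isomorphism $d(\phi_P)_Q$) is exactly the pointwise content of the paper's global involution argument.

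However, there is a genuine gap. Your entire fiberwise analysis for a fixed $R\in X$ hinges on the existence of a non-empty open set of $P$ with $R$ in the involution locus of $\phi_P$, i.e.\ with $R\notin C_P$. You use this twice without justification: once implicitly to make sense of ``$f_3(z_2)=\phi_P(R)$'' via the involution, and once explicitly to run the differential argument (which needs $\phi_P$ to be a local isomorphism near $\phi_P(R)$, hence needs $\phi_P(R)$---equivalently $R$---to lie in the involution locus). This is \emph{not} automatic: the locus $\{P:R\in C_P\}$ is cut out by the condition $(R)\sim 2(\iota(P))-(P)$ on $E_{P,R}$, and nothing you cite rules out that it is all of $X$. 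The paper isolates precisely this point as Lemma~\ref{LemSurjectivity} and proves it by a short but non-obvious argument: if $R\in C_P$ for all general $P$, then applying the same to $\iota(P)$ gives $3((P)-(\iota(P)))\sim 0$ on $E_{P,R}$, and letting $P$ vary along a fixed $E_{P,R}$ produces infinitely many $3$-torsion points on an elliptic curve, a contradiction. Without this step, neither your surjectivity claim (iii) nor the non-emptiness of the split open piece of $W\cap f_6^{-1}(R)$ is established. (There is also a smaller omission: your differential argument for $(P,\phi_P(R))\in V_\phi$ needs $(P,\phi_P(R))$ to lie in the domain of the two-variable $\phi$, cf.\ Remark~\ref{Rmk:domain restriction}; this does follow once $R\notin C_P$ and $E_{P,R}$ is smooth for general $P$, but you should say so.)

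A final minor point: you claim $W\cap f_6^{-1}(R)$ is geometrically integral, but your projection argument only describes the fibers over the ``nice'' open of $Z_3$; over $z_1$ with $f_3(z_1)=R$ the fiber can pick up $f_3^{-1}(C_R\cap V_3)$. Since the proposition only asks for \emph{split} fibers, it is enough to exhibit a non-empty geometrically integral open piece, which your argument (once the gap above is filled) does give; the paper's use of $U_{\mathrm{inv}}$ and Lemma~\ref{LemLiu} sidesteps this bookkeeping entirely.
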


The reader might notice that (ii) and (iii) combined may be summarized as ``$f_6|_W:W \to X$ is smooth with split fibers''. We made this separation because we will prove that there exists such a $W$ by guaranteeing one condition at a time: we produce $W_1$, $W_2$, and $W=W_3$ that satisfy (i), (i)-(ii), and (i)-(iii), respectively.

\subsubsection{Condition {\em (i)}.}

Recall that $U_{\phi} \subset X \times X$ was the subdomain of $\phi$ defined in Proposition \ref{prop:domain}. Letting $V_3 \subset Z_3$ be the domain of the rational map $f_3$, clearly $f_6$ restricts to a morphism on $W_1:=(V_3 \times V_3) \cap (f_3,f_3)^{-1}(U_{\phi})$.

\subsubsection{Conditions {\em (i)-(ii)}}

We first define an open subset $U_{\mathrm{inv}} \subset U_{\phi}$ that is endowed with a natural involution and on which we will prove (Corollary \ref{cor:smoothGIF}) that $\phi$ is smooth with geometrically integral fibers. Let $U_0 \subset X$ be the complement of the ramification divisor $\mathcal{R}$ and the lines on $X$.
Note that the curve $C_P$ is well-defined for $P \in U_0$ and that $\{(P,Q) \in U_0 \times X \mid Q \in C_P\}$ is closed in $U_0 \times X$.
Thus
\[
U_{\mathrm{inv}}:=\{(P,Q) \in U_{\phi} \mid P \in U_0, Q \notin C_P\}
\]
is open in  $U_{\phi}$, equiv.\ in $X \times X$.

For $i=1,2$, we denote by $pr_i\colon U_{\mathrm{inv}}\longrightarrow X$ the projection on the $i^{\tiny\text{th}}$ coordinate.

\begin{lemma}\label{LemSmooth}
    The map $(pr_1,\phi)\colon U_{\mathrm{inv}} \to X \times X$ has image equal to $U_{\mathrm{inv}}$, and the morphism $(pr_1,\phi)\colon U_{\mathrm{inv}} \to U_{\mathrm{inv}}$ is an involution that makes the following diagram commute:
    \[
    \begin{tikzcd}
        U_{\mathrm{inv}} \arrow[rd, "\phi"] \arrow[rr, "{(pr_1,\phi)}", "\sim"'] &   & U_{\mathrm{inv}} \arrow[ld, "pr_2"] \\
        & X &                     
    \end{tikzcd}
    \]
\end{lemma}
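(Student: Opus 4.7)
The plan is to unwind the definitions and verify the conditions one by one, leveraging Remark~\ref{rem:inv} (that $\phi_P$ induces an involution on $X \setminus \{P, \iota(P), C_P\}$) and the set-theoretic description of $\phi$ given in the paragraph following Proposition~\ref{prop:domain}. The key observation driving everything is that the value $R := \phi(P,Q)$ lies by construction on the anticanonical curve $E_{P,Q}$; hence, provided $\kappa(R) \neq \kappa(P)$, the line $L_{\kappa(P), \kappa(R)}$ coincides with $L_{\kappa(P), \kappa(Q)}$, so $E_{P, R} = E_{P, Q}$.

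First I would fix $(P, Q) \in U_{\mathrm{inv}}$ and set $R := \phi(P, Q)$. The assumption $(P, Q) \in U_{\phi}$ forces $Q \notin \{P, \iota(P)\}$, and $Q \notin C_P$ by the additional constraint defining $U_{\mathrm{inv}}$, so $Q \in X \setminus \{P, \iota(P), C_P\}$ and Remark~\ref{rem:inv} applies to yield $R \in X \setminus \{P, \iota(P), C_P\}$. In particular $\kappa(R) \neq \kappa(P)$, so $L_{\kappa(P), \kappa(R)} = L_{\kappa(P), \kappa(Q)}$ and $E_{P, R} = E_{P, Q}$; this immediately gives the first two defining conditions of $U_{\phi}$ for the pair $(P, R)$. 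For the third condition --- smoothness of $P$ and $R$ as points of $E_{P, R}$ --- I would invoke the hypothesis $(P, Q) \in U_{\phi}$ for $P$, and the explicit description of $\phi$ recorded after Proposition~\ref{prop:domain} (namely that $\phi(P, Q)$ is the unique \emph{smooth} point $R$ of $E_{P, Q}$ with $(R) \sim 2(\iota(P)) - (Q)$) for $R$. Combined with $P \in U_0$ (automatic) and $R \notin C_P$ (noted above), this shows $(P, R) \in U_{\mathrm{inv}}$, so $(pr_1, \phi)$ factors through $U_{\mathrm{inv}}$.

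Next I would verify the involution property. Since $R$ lies in $X \setminus \{P, \iota(P), C_P\}$ and $\phi_P$ is an involution on this set by Remark~\ref{rem:inv}, one has $\phi_P(R) = \phi_P(\phi_P(Q)) = Q$, so
\[
(pr_1, \phi) \circ (pr_1, \phi)(P, Q) = (pr_1, \phi)(P, R) = (P, \phi_P(R)) = (P, Q)
\]
on $\overline{k}$-points, and hence as morphisms $U_{\mathrm{inv}} \to U_{\mathrm{inv}}$. Being an involution, $(pr_1, \phi)$ is in particular a bijection, whence its image coincides with $U_{\mathrm{inv}}$. Commutativity of the triangle is immediate from the definition: $pr_2 \circ (pr_1, \phi) = \phi$.

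In short, the lemma is a careful bookkeeping check that $U_{\mathrm{inv}}$ has been chosen as the largest open subset of $U_{\phi}$ on which the ``one-variable'' involution of Remark~\ref{rem:inv} can be promoted to a symmetric involution in both variables. I do not foresee any real obstacle; the only non-trivial input beyond the definitions is the smooth-locus description of $\phi(P, Q)$ obtained from the proof of Proposition~\ref{prop:domain}, which is precisely what ensures that the smoothness condition in the definition of $U_{\phi}$ is preserved under the operation $(P,Q) \mapsto (P, \phi(P,Q))$.
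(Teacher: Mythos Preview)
Your proposal is correct and follows essentially the same route as the paper's proof: both check that $(P,R)\in U_{\mathrm{inv}}$ by noting $E_{P,R}=E_{P,Q}$ and $R\notin\{P,\iota(P)\}\cup C_P$, then verify the involution property and observe that commutativity is tautological. The only cosmetic difference is that the paper establishes the involution directly from the divisor relation $(R)\sim 2(\iota(P))-(Q)\iff (Q)\sim 2(\iota(P))-(R)$, whereas you appeal to Remark~\ref{rem:inv}; your version is arguably more careful in making explicit why $R$ is a \emph{smooth} point of $E_{P,R}$.
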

\begin{proof}
    Fix $P \in U_0$. For all $Q\notin \{P, \iota(P)\}\cup C_P$ we have $L_{\kappa(P),\kappa(Q)}=L_{\kappa(P),\kappa(\phi(P,Q))}$, and $\phi^Q(P)\notin\{P, \iota(P)\}\cup C_P$ by Remark \ref{rem:inv}. When $(P,Q) \in U_{\mathrm{inv}}$, $\phi^Q(P)=\phi(P,Q)$. Since $P$ was arbitrary, it follows that $(pr_1,\phi)(U_{\mathrm{inv}})$ is contained in $U_{\mathrm{inv}}$. Let us now show that $(pr_1,\phi)$ is an involution. By definition, we have $(pr_1,\phi)(P,Q)=(P,R)$, where $R$ is the unique smooth point on $E_{P,Q}$ such that $(R) \sim 2(\iota(P)) - (Q)$. It follows that $(Q) \sim 2(\iota(P)) - (R)$, hence $(pr_1,\phi)(P,R)=(P,Q)$, i.e.\ $(pr_1,\phi)$ is an involution, as claimed. The commutativity of the diagram is clear.
\end{proof}

\begin{corollary}\label{cor:smoothGIF}
$\phi\colon U_{\mathrm{inv}} \longrightarrow X$ is smooth with geometrically integral fibers.
\end{corollary}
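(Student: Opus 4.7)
The proof will be essentially immediate from Lemma \ref{LemSmooth}. The plan is to use the commutative triangle in that lemma to reduce the claim about $\phi$ to the analogous claim about the second projection $pr_2|_{U_{\mathrm{inv}}}\colon U_{\mathrm{inv}}\to X$, and then to conclude by general properties of smooth projections from a product.

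Concretely, Lemma \ref{LemSmooth} exhibits $(pr_1,\phi)\colon U_{\mathrm{inv}} \to U_{\mathrm{inv}}$ as an involution, hence an isomorphism of $k$-schemes, together with the factorization $\phi = pr_2 \circ (pr_1,\phi)$. Both smoothness and the property of having geometrically integral fibers are invariant under precomposition with an isomorphism, so it will suffice to check these two properties for $pr_2|_{U_{\mathrm{inv}}}$. Now $pr_2\colon X \times X \to X$ is smooth, being the base change of the smooth structure morphism $X \to \Spec k$, and each of its fibers is isomorphic to $X$, which is geometrically integral. Restricting the source to the open subscheme $U_{\mathrm{inv}} \subset X \times X$ preserves smoothness and cuts each fiber out as an open subscheme of $X$, which is geometrically integral whenever non-empty.

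If one insists on non-emptiness of all fibers for the stated formulation, this is easy as well: for fixed $Q \in X$, the conditions defining $U_{\mathrm{inv}}$ in the first coordinate (namely $P\in U_0$, $\kappa(P)\neq \kappa(Q)$, non-bitangency of $L_{\kappa(P),\kappa(Q)}$, smoothness of $E_{P,Q}$ at $P$ and $Q$, and $Q \notin C_P$) each cut out at most a proper closed subset of $X$, so their common complement is open and dense. The genuine work has already been carried out in Lemma \ref{LemSmooth} (and before that in Proposition \ref{prop:domain}); the corollary itself is a purely formal consequence, so I do not expect any real obstacle to arise.
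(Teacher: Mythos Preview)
Your proof is correct and follows essentially the same route as the paper: use the commutative triangle from Lemma~\ref{LemSmooth} to transfer the question from $\phi$ to $pr_2|_{U_{\mathrm{inv}}}$, then observe that the latter is smooth with fibers open in the geometrically integral variety $X$.

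One small caveat on your optional non-emptiness paragraph: among the conditions you list, the claim that $\{P \in U_0 : Q \in C_P\}$ is a \emph{proper} closed subset is precisely the content of Lemma~\ref{LemSurjectivity}, which the paper proves separately with a genuine argument (a $3$-torsion contradiction on $E_{P,Q}$). So that condition is not as automatic as the others. Since you flagged this as optional and the paper indeed defers surjectivity to Corollary~\ref{cor:surjective}, this does not affect the validity of your proof of the corollary as stated.
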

\begin{proof}
By the commutativity of the diagram in Lemma \ref{LemSmooth}, this follows directly from the fact that $pr_2\colon U_{\mathrm{inv}} \subset X \times X \to X$ satisfies the same properties. Indeed, $pr_2$ is smooth as the projection $X \times X \to X$ is, and the fibres of $pr_2$ are non-empty open subsets of $X$ and are thus smooth and geometrically integral as $X$ is.
\end{proof}

Let now $U \subset X$ be an open subset over which $f_3|_{V_3}$ is smooth with geometrically integral fibers: since $V_3$ is smooth and the generic fiber of $f_3$ is geometrically integral (Proposition \ref{GIGF}), such $U$ exists by generic smoothness in characteristic zero \cite[Cor.~III.10.7]{HAR} and spreading out \cite[Thm.~9.7.7]{EGA}. 

Define $W_2\subset W_1$ as the open subset $(V_3 \times V_3) \cap (f_3,f_3)^{-1}(U_{\mathrm{inv}} \cap (U \times U))$. 
Note that in the composition 
\[
f_6\colon W_2 \xrightarrow{(f_3,f_3)} U_{\mathrm{inv}} \cap (U \times U) \xrightarrow{\phi} X,
\]
the first morphism is smooth with geometrically integral fibers by construction of $U$, while the second morphism is smooth with geometrically integral fibers by Corollary \ref{cor:smoothGIF}. The following lemma then proves that condition (ii) holds.

\begin{lemma}\label{LemLiu}
    Let $f\colon  X_1 \xrightarrow{g} X_2 \xrightarrow{h} X_3$ be a composition of morphisms of algebraic $k$-varieties such that $g$ is flat and $g$ and $h$ have geometrically integral fibers. Then $f$ has geometrically integral fibers. Moreover, if both $g$ and $h$ are smooth, then so is $f$.
\end{lemma}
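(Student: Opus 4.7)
The plan is to reduce both claims to standard statements about flat morphisms over integral bases.

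For the fibre statement, I would fix $x_3 \in X_3$ and set $K := \overline{k(x_3)}$. Since $f = h \circ g$, the base change of $f^{-1}(x_3)$ to $K$ equals
\[
X_1 \times_{X_2} (h^{-1}(x_3))_K,
\]
in which $(h^{-1}(x_3))_K$ is integral by the geometric integrality of the fibres of $h$, while the projection onto this factor is the base change of $g$, and so is still flat with geometrically integral fibres. It therefore suffices to prove the auxiliary claim that a flat morphism $\tilde g\colon \tilde X_1 \to \tilde X_2$ with geometrically integral fibres over an integral base $\tilde X_2$ has integral source.

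For this auxiliary claim I would treat irreducibility and reducedness separately. Irreducibility: the fibre over the generic point $\xi_2$ of $\tilde X_2$ is integral, so has a unique generic point $\xi_1$; by going-down for flat morphisms, any generic point of $\tilde X_1$ maps to a generic point of $\tilde X_2$, hence to $\xi_2$, so it is a generic point of $\tilde g^{-1}(\xi_2)$ and therefore equals $\xi_1$. Reducedness follows from the standard fact (see e.g.\ \cite[IV.3.3.5]{EGA}) that a flat morphism with geometrically reduced fibres over a reduced base has reduced source.

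For the smoothness part I would simply invoke the stability of smoothness under composition, which is standard (e.g.\ \cite[Prop.~III.10.1(b)]{HAR}). The main subtlety to flag is that reducedness of $X_1$ genuinely requires the \emph{geometric} reducedness of the fibres rather than mere reducedness, which is the non-trivial ingredient; irreducibility is elementary from going-down, and smoothness under composition is routine.
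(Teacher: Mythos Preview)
Your proposal is correct and follows essentially the same route as the paper: fix a point $t \in X_3$, base change to $\overline{k(t)}$, and use that a flat morphism with geometrically integral fibers over an integral base has integral total space, then cite stability of smoothness under composition. The only difference is that the paper invokes \cite[Prop.~4.3.8]{LIU} for the auxiliary claim whereas you unpack it via going-down and the EGA reducedness criterion; this is the same argument with a reference expanded.
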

\begin{proof}
    We use the language of schemes (so points are scheme-theoretic points). For $t \in X_3$, we have that $g|_{f^{-1}(t)}\colon f^{-1}(t) \to h^{-1}(t)$ is a flat morphism with geometrically integral fibers over a geometrically integral $k(t)$-variety. Applying \cite[Prop.~4.3.8]{LIU} to the base change of $g|_{f^{-1}(t)}$ to $\overline{k(t)}$, we deduce that $f^{-1}(t)$ is a geometrically integral $k(t)$-variety. For smoothness of the composition, see \cite[Lem.~29.34.4,~Tag~01VA]{SP}.
\end{proof}

\subsubsection{Conditions {\em (i)-(iii)}}
After restricting $U$ if necessary and replacing $V_3$ by $V_3 \cap f_3^{-1}(U)$, we may assume $f_3(V_3)=U$. Let $W_3$ be defined as $W_2$ after replacing $U$ and $V_3$ by their restrictions (which we call $U$ and $V_3$ again, respectively), i.e.\ $W_3 = (V_3 \times V_3) \cap (f_3,f_3)^{-1}(U_{\mathrm{inv}}\cap(U\times U))$. Then the first morphism in the following composition is surjective:
\[
f_6\colon W_3 \xrightarrow{(f_3,f_3)} U_{\mathrm{inv}} \cap (U \times U) \xrightarrow{\phi} X.
\]

To verify that the composition is surjective, it is then enough to prove that $\phi\colon U_{\mathrm{inv}} \to X$ is surjective and that all fibers of $\phi|_{U_{\mathrm{inv}}}$ intersect $U \times U$. We verify these two properties in Corollary \ref{cor:surjective} and Corollary \ref{cor:slanted}, respectively.

\begin{lemma}\label{LemSurjectivity}
    For every $Q \in X$, there exists a non-empty open subset $U_Q \subset X$ such that $(P,Q)$ is contained in ${U_{\mathrm{inv}}}$ for every $P \in U_Q$. 
\end{lemma}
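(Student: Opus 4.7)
The plan is to verify, for a given $Q \in X$, that each of the five conditions characterising membership in $U_{\mathrm{inv}}$ (as a condition on $P$) cuts out a non-empty Zariski-open subset of $X$; the desired $U_Q$ will then be their intersection. Explicitly, writing out the conditions, I need (i) $P \in U_0$; (ii) $\kappa(P) \neq \kappa(Q)$; (iii) $L_{\kappa(P),\kappa(Q)}$ not bitangent to $\mathcal{B}$; (iv) $P, Q$ both smooth on $E_{P,Q}$; and (v) $Q \notin C_P$. Conditions (i)--(iv) each exclude only a finite union of points and curves: respectively, $\mathcal{R}$ together with the $56$ exceptional curves; the two points $Q, \iota(Q)$; the preimages $\kappa^{-1}(L)$ of the (at most $28$) bitangents $L$ through $\kappa(Q)$; and when $Q \in \mathcal{R}$ the additional curve $\kappa^{-1}(T_{\kappa(Q)}\mathcal{B})$, condition (iv) being automatic otherwise since $P\in U_0$ forces $P\notin \mathcal{R}$.

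The substance of the proof is condition (v), which requires showing that $B_Q := \{P\in U_0 : Q\in C_P\}$ is a proper closed subset of $U_0$. My main input is \cite[Prop.~4.14]{FvL} (already invoked in the proof of Proposition \ref{prop:cpep}), which asserts that $\iota(P') \notin C_{P'}$ for every general $P'$. When $Q\in U_0$ the Geiser involution preserves $U_0$ (it fixes $\mathcal{R}$ pointwise and permutes the $56$ exceptional curves in $28$ pairs), so $\iota(Q)\in U_0$ is itself general; applying the cited result with $P'=\iota(Q)$ yields $Q = \iota(\iota(Q))\notin C_{\iota(Q)}$, exhibiting $\iota(Q)\in U_0\setminus B_Q$, so that $B_Q$ is proper.

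For $Q \notin U_0$ --- that is, $Q$ on $\mathcal{R}$ or on an exceptional curve --- the direct argument fails because $\iota(Q) \notin U_0$, and I expect this to be the main obstacle. My approach is to consider the incidence variety $\mathcal{I} = \{(P,Q') \in U_0\times X : Q'\in C_P\}$, which is irreducible of dimension $3$ (fibered over $U_0$ by the $1$-dimensional curves $C_P$). Its closure $\overline{\mathcal{I}} \subset X\times X$ admits a proper surjective second projection $\pi_2$ whose generic fiber has dimension $1$, so by the fiber-dimension theorem the locus $\{Q \in X : \dim\pi_2^{-1}(Q)\geq 2\}$ is closed of dimension at most $1$, and by the previous paragraph it is disjoint from $U_0$. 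To rule it out completely I would show that the morphism $\mu\colon U_0 \to |{-2K_X}| \cong \mathbb{P}^6$, $P\mapsto C_P$, has image spanning the whole base-point-free system $|{-2K_X}|$ (so that no point of $X$ lies on every $C_P$), which can be attempted via a direct analysis in the octic embedding of $X$. With this in hand, condition (v) cuts out an open dense subset, and its intersection with the open dense sets from (i)--(iv) yields the desired non-empty open $U_Q$.
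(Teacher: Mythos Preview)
Your handling of conditions (i)--(iv) is correct, and your argument for condition (v) when $Q \in U_0$ (via \cite[Prop.~4.14]{FvL} applied at $P' = \iota(Q)$) is valid. The gap lies in the case $Q \notin U_0$. Your dimension count on the incidence variety $\overline{\mathcal{I}}$ only shows that the set of $Q$ with $B_Q = U_0$ is contained in a proper closed subset of $X$ (in fact, combining with irreducibility of $\overline{\mathcal{I}}$ and surjectivity of $\pi_2$ one can push this down to a finite set); it does not show that this set is empty. You then propose to close the gap by proving that the image of $\mu\colon P \mapsto C_P$ spans $|{-2K_X}|$, which would indeed suffice, but you explicitly leave this as something that ``can be attempted'' rather than carrying it out. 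As written, the case $Q \notin U_0$ is unproven.

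The paper's argument is different and handles all $Q$ uniformly, without case-splitting or incidence varieties. One assumes for contradiction that some fixed $Q$ lies on $C_P$ for every Zariski-general $P$. By the Festi--van Luijk description of $C_P$ (Remark~\ref{rem:phidef}(1)), membership $Q \in C_P$ amounts to the relation $(Q) \sim 2(\iota(P)) - (P)$ on the anticanonical curve $E_{P,Q}$. Since $\iota(P)$ is also Zariski-general and $E_{\iota(P),Q} = E_{P,Q}$, one equally has $(Q) \sim 2(P) - (\iota(P))$ on the same curve; subtracting gives $3\bigl((P) - (\iota(P))\bigr) \sim 0$, so (taking $Q$ as origin) $P$ is $3$-torsion on $E_{P,Q}$. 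Fixing one such $P$ and letting $P'$ range over the Zariski-general points of the \emph{fixed} curve $E_{P,Q}$ (for which $E_{P',Q} = E_{P,Q}$), one obtains infinitely many $3$-torsion points on a single genus-$1$ curve, a contradiction. This avoids any analysis of the linear system $|{-2K_X}|$.
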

\begin{proof}
    Assume by contradiction that there exists $Q \in X$ (which we fix for the remainder of the proof) such that we have $(P,Q) \notin {U_{\mathrm{inv}}}$ for every $P \in X$. Indeed, it is sufficient to show that this is false, since the set of $P \in X$ such that $(P,Q) \in U_{\text{inv}}$ is open in $X$. Note that, for a Zariski-general $P \in X$, we certainly have that $(P,Q) \in U_{\phi}$ and $P \in U_0$. In particular, our contradiction assumption then implies that $Q$ is contained in $C_P$ for all Zariski-general $P \in X$. Fix such a $P$. From Remark~\ref{rem:phidef}~(1) it follows that $(Q) \sim 2(\iota(P))-(P)$ on $E_{P,Q}$. But this also holds for $\iota(P)$, since $P$ is Zariski-general if and only $\iota(P)$ is. Then on $E_{P,Q}=E_{\iota(P),Q}$ we have 
    $$
    2(P)-(\iota(P)) \sim (Q)\sim 2(\iota(P))-(P) ,
    $$
    so we find $3((P)-(\iota(P))\sim 0$, hence $P=\iota(P)+T$, where $T$ is a 3-torsion point of the Jacobian of $E_{P,Q}$. Setting $Q$ as the origin of $E_{P,Q}$, we have that $\iota(P)=[2]P$, thus $P=-T$ is 3-torsion. Since this holds for all $P'$ in a Zariski-open set $U$ in $X$, this gives a Zariski-open set $U'=U\cap E_{P,Q}$ of $E_{P,Q}$, such that all points in $U'$ are 3-torsion on $E_{P',Q}=E_{P,Q}$. Thus $E_{P,Q}$ has infinitely many 3-torsion points, giving a contradiction. 
\end{proof}

\begin{corollary}\label{cor:surjective}
    $\phi\colon {U_{\mathrm{inv}}} \longrightarrow X$ is surjective.
\end{corollary}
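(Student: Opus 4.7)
The plan is to deduce surjectivity directly by combining Lemma \ref{LemSurjectivity} with the involution property established in Lemma \ref{LemSmooth}. The point is that the involution allows us to reverse the role of $\phi$: if $(P,Q)$ lies in $U_{\mathrm{inv}}$, then applying the involution twice expresses $Q$ as a value of $\phi(P,-)$.

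More concretely, fix an arbitrary $Q \in X$. By Lemma \ref{LemSurjectivity} there exists some $P \in X$ such that $(P,Q) \in U_{\mathrm{inv}}$. Set $Q' := \phi(P,Q)$. By Lemma \ref{LemSmooth}, the morphism $(pr_1,\phi)$ preserves $U_{\mathrm{inv}}$, so $(P,Q') = (pr_1,\phi)(P,Q) \in U_{\mathrm{inv}}$. Since $(pr_1,\phi)$ is an involution on $U_{\mathrm{inv}}$, applying it again yields
\[
(P,Q) \;=\; (pr_1,\phi)\bigl((pr_1,\phi)(P,Q)\bigr) \;=\; (pr_1,\phi)(P,Q') \;=\; \bigl(P,\phi(P,Q')\bigr),
\]
whence $\phi(P,Q') = Q$. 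Thus $Q$ lies in the image of $\phi|_{U_{\mathrm{inv}}}$, and surjectivity follows since $Q$ was arbitrary.

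There is essentially no obstacle here: the hard work has been absorbed into Lemma \ref{LemSurjectivity} (which guarantees that any point of $X$ can appear as a second coordinate of a pair in $U_{\mathrm{inv}}$) and into Lemma \ref{LemSmooth} (which encodes the structural fact that $\phi(P,-)$ is self-inverse where defined). The corollary is just the formal consequence that an involution of $U_{\mathrm{inv}}$ fitting in the commutative triangle of Lemma \ref{LemSmooth} forces the projection-like map $\phi$ to have the same image as $pr_2$, which is all of $X$.
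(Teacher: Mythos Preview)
Your proof is correct and follows essentially the same approach as the paper, which simply states that combining Lemma~\ref{LemSmooth} and Lemma~\ref{LemSurjectivity} gives that every fiber of $\phi|_{U_{\mathrm{inv}}}$ is non-empty. You have spelled out the details of this combination explicitly (and correctly), including the key observation in your final paragraph that the commutative triangle forces $\phi(U_{\mathrm{inv}}) = pr_2(U_{\mathrm{inv}}) = X$.
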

\begin{proof}
Combining Lemma \ref{LemSmooth} and Lemma \ref{LemSurjectivity}, it directly follows that the fibers of $\phi\colon {U_{\mathrm{inv}}} \longrightarrow X$ are non-empty.
\end{proof}

The following lemma proves that no fiber of $\phi$ is contained in $(D\times X) \cup (X\times D)$ for any proper closed subvariety $D$ of $X$ (we say that the fibers are \textsl{slanted} in $X \times X$).

\begin{lemma}[Slantedness]\label{LemSlantedness}
    For every proper closed subvariety $D\subsetneq X$, and $R \in X$ a point, there exists a point $(P,Q) \in {U_{\mathrm{inv}}}\cap \phi^{-1}(R)$ with $P,Q \notin D$. 
\end{lemma}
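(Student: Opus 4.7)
The plan is to use the involution structure of Lemma \ref{LemSmooth} to parametrize the fiber $\phi^{-1}(R)\cap U_{\mathrm{inv}}$, and then to apply an openness argument after establishing the dominance of the rational self-map $\phi^R=\phi(-,R)\colon X\dashrightarrow X$.

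First, by Lemma \ref{LemSmooth}, the map $\sigma:=(pr_1,\phi)\colon U_{\mathrm{inv}}\to U_{\mathrm{inv}}$ is an involution satisfying $pr_2\circ\sigma=\phi$. Consequently, $\sigma$ restricts to a bijection between $pr_2^{-1}(R)\cap U_{\mathrm{inv}}=\{(P,R):P\in U_R\}$ and $\phi^{-1}(R)\cap U_{\mathrm{inv}}$, where $U_R:=\{P\in X:(P,R)\in U_{\mathrm{inv}}\}$ is non-empty and open by Lemma \ref{LemSurjectivity} (applied with $Q=R$). Explicitly, the fiber $\phi^{-1}(R)\cap U_{\mathrm{inv}}$ is the image of the injection $P\mapsto(P,\phi^R(P))$ defined on $U_R$, so it suffices to locate a single $P\in U_R$ satisfying $P\notin D$ and $\phi^R(P)\notin D$.

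Next, I would show that $\phi^R$ is dominant. By the first diagram of Proposition \ref{prop:comm}, $\phi^R$ is $b$-conjugate to the rational self-map $[2]\circ\iota-T_R$ of the elliptic surface $\mathcal{E}_R$. On every smooth fiber of $\nu\colon\mathcal{E}_R\to\mathbb{P}^1$, the latter restricts to the composition of the automorphism $\iota$, multiplication by $2$, and translation by $-T_R$ on a smooth elliptic curve; since multiplication by $2$ is an isogeny of degree $4$ in characteristic zero, this composition is a surjective degree-$4$ self-morphism on each smooth fiber. Hence $[2]\circ\iota-T_R$ is dominant on $\mathcal{E}_R$, and composing with the surjection $b\colon\mathcal{E}_R\to X$ yields the dominance of $\phi^R$ on $X$.

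Finally, since $\phi^R$ is dominant and $D\subsetneq X$ is a proper closed subset, $(\phi^R)^{-1}(D)$ is a proper closed subset of $X$. Therefore $U_R\setminus\left(D\cup(\phi^R)^{-1}(D)\right)$ is a non-empty open subset of the irreducible variety $X$. Choosing any $P$ in this set and setting $Q:=\phi^R(P)$, we obtain the required $(P,Q)\in U_{\mathrm{inv}}\cap\phi^{-1}(R)$ with $P,Q\notin D$. The only non-formal step is the dominance of $\phi^R$, which is the main obstacle; once this is established via the elliptic-surface description of Proposition \ref{prop:comm}, the remainder is a routine open-set intersection argument.
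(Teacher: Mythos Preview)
Your proof is correct and follows essentially the same approach as the paper: both arguments use Lemma~\ref{LemSurjectivity} to obtain the open set $U_R$, invoke Proposition~\ref{prop:comm} to establish dominance of $\phi^R=\phi(-,R)$, and then choose a Zariski-general $P\in U_R$ avoiding $D$ with $\phi^R(P)\notin D$, concluding via the involution that $\phi(P,Q)=R$. Your write-up is simply more explicit about the involution parametrization (via Lemma~\ref{LemSmooth}) and about why $[2]\circ\iota-T_R$ is dominant, whereas the paper states these points tersely.
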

\begin{proof}
    Fix $R \in X$ and $D \subsetneq X$ as in the statement. Let $U_R\subset X$ be the non-empty open set of Lemma \ref{LemSurjectivity}. By Proposition \ref{prop:comm}, the rational map $\phi(-,R)\colon X \dashrightarrow X, P \mapsto 2(\iota(P))-(Q)$ is dominant. In particular, for Zariski-general $P\in U_R$, we have that $Q=\phi(P,R)=2(\iota(P))-(R) \notin D$ and clearly $P \not\in D$. Since $\phi(P,Q)$ is now equal to $R$, this finishes the proof. 
\end{proof}
\begin{corollary}\label{cor:slanted}
    All fibers of $\phi\colon {U_{\mathrm{inv}}} \longrightarrow X$ intersect $U \times U$.
\end{corollary}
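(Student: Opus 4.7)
The plan is to derive the corollary as an immediate application of the preceding slantedness lemma (Lemma~\ref{LemSlantedness}). Fix an arbitrary $R \in X$; we must exhibit $(P,Q) \in U_{\mathrm{inv}} \cap \phi^{-1}(R)$ with $P, Q \in U$. Recall that $U \subset X$ was chosen to be a non-empty open subset over which $f_3|_{V_3}$ is smooth with geometrically integral fibers, so that $D := X \setminus U$ is a proper closed subset of $X$. Applying Lemma~\ref{LemSlantedness} with this $D$ then yields exactly the desired point.

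The only minor issue is that Lemma~\ref{LemSlantedness} is stated for a proper closed subvariety (i.e.\ irreducible), whereas $D$ may in general be reducible. This is harmless: the proof of Lemma~\ref{LemSlantedness} really shows that for Zariski-general $P$ in the open $U_R$ of Lemma~\ref{LemSurjectivity}, both $P$ and $Q = \phi(P,R) = [2]\iota(P) - (R)$ avoid $D$, as ``avoiding $D$'' is an open condition whenever $D \subsetneq X$ is a proper closed subset. Equivalently, one may apply Lemma~\ref{LemSlantedness} separately to each of the finitely many irreducible components of $D$, intersect the resulting non-empty opens of $U_R$, and pick $P$ from the intersection. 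Either way we obtain $(P,Q) \in U_{\mathrm{inv}} \cap \phi^{-1}(R)$ with $P, Q \notin D$, i.e.\ $(P,Q) \in (U \times U) \cap \phi^{-1}(R)$, and the corollary follows. There is no substantive obstacle; the work has already been done in Lemma~\ref{LemSlantedness}, and the role of the corollary is simply to specialize that lemma to the open $U$ arising from generic smoothness of $f_3$.
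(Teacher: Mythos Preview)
Your proof is correct and takes essentially the same approach as the paper, which simply says ``Take $D = X \setminus U$ in Lemma~\ref{LemSlantedness}.'' You are in fact slightly more careful than the paper in addressing the possible reducibility of $D$ given the paper's convention that varieties are geometrically integral; this is a harmless technicality, as you note.
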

\begin{proof}
    Take $D=X \setminus U$ in Lemma \ref{LemSlantedness}.
\end{proof}
Combining the above and applying Proposition \ref{prop:ssfcover}, we deduce Proposition~\ref{prop:su}.

\subsubsection{Proof of the main theorem}

We already phrased the argument in the paragraph above Proposition \ref{prop:su}, but we repeat it here for the reader's convenience.

\begin{proofmainthm}
Proposition \ref{prop:su} shows that $X$ is split-unirational. Thus, according to Proposition \ref{prop:goodcover}, we deduce that it satisfies weak weak approximation. \qed
\end{proofmainthm}
 
\end{document}